\title{Cutoff profiles for quantum Lévy processes and quantum random transpositions}
\author{Amaury Freslon}
\author{Lucas Teyssier}
\author{Simeng Wang}
\email{amaury.freslon@math.u-psud.fr}
\email{lucas.teyssier@univie.ac.at}
\email{simeng.wang@math.u-psud.fr}
\address{A. Freslon, Universit\'e Paris-Saclay, CNRS, Laboratoire de Math\'ematiques d'Orsay, 91405 Orsay, France}
\address{L. Teyssier, Universität Wien, Fakultät für Mathematik, 1090 Vienna, Austria}
\address{S. Wang, Universit\'e Paris-Saclay, CNRS, Laboratoire de Math\'ematiques d'Orsay, 91405 Orsay, France}
\date{}
\theoremstyle{plain}
\newtheorem{thm}{Theorem}[section]
\newtheorem{lem}[thm]{Lemma}
\newtheorem{prop}[thm]{Proposition}
\newtheorem{cor}[thm]{Corollary}
\theoremstyle{definition}
\newtheorem{de}[thm]{Definition}
\newtheorem{rem}[thm]{Remark}
\DeclareMathOperator{\FMeix}{Meix^{+}}
\DeclareMathOperator{\FPoiss}{Poiss^{+}}
\DeclareMathOperator{\id}{id}
\DeclareMathOperator{\Span}{Span}
\DeclareMathOperator{\var}{var}
\newcommand{\B}{\mathcal{B}}
\newcommand{\C}{\mathbf{C}}
\newcommand{\D}{\Delta}
\newcommand{\E}{\mathbb{E}}
\newcommand{\G}{\mathbb{G}}
\newcommand{\N}{\mathbf{N}}
\newcommand{\R}{\mathbf{R}}
\renewcommand{\O}{\mathcal{O}}
\newcommand{\summ}[2]{\sum_{#1}^{#2}}
\DeclareMathOperator{\Poiss}{Poiss}
\DeclareMathOperator{\dvt}{d_{TV}}
\newcommand{\abs}[1]{\left\lvert #1 \right\rvert}
\newcommand{\cg}{\left[}
\newcommand{\cd}{\right]}
\newcommand{\ag}{\left\{}
\newcommand{\ad}{\right\}}
\newcommand{\pg}{\left(} 
\newcommand{\pd}{\right)}
\newcommand{\lucas}[1]{\textcolor{orange}{#1}}
\begin{document}

\maketitle

\begin{center}
    \textbf{Abstract}
\end{center}

We consider a natural analogue of Brownian motion on free orthogonal quantum groups and prove that it exhibits a cutoff at time $N\ln(N)$.
Then, we study the induced classical process on the real line and compute its atoms and density. This enables us to find the cutoff profile, which involves free Poisson distributions and the semi-circle law. We prove similar results for quantum permutations and quantum random transpositions.
\newline
\begin{center}
    \textbf{Résumé}
\end{center}

Nous considérons un analogue naturel du mouvement brownien sur les groupes libres quantiques orthogonaux et montrons qu'il a une coupure au temps $N\ln(N)$.
Nous étudions ensuite le processus classique induit sur la droite réelle et calculons ses atomes et sa densité. Cela nous permet de trouver le profil de coupure, qui fait intervenir des lois de Poisson libres et la loi du semi-cercle. Nous prouvons des résultats similaires pour les permutations quantiques et les transpositions aléatoires quantiques.

\section{Introduction}

Let $(X_{N})_{N\in\mathbf{N}}$ be a sequence of irreducible aperiodic finite state Markov chains, $\mu_N(t)$ the distribution of $X_{N}$ after $t$ steps, and $\mu_N(\infty)$ the stationary measure of $X_{N}$. Let also
\begin{equation*}
\mathrm{d}_{N}(t) = \dvt(\mu_N(t),\mu_N(\infty))
\end{equation*}
be the distance of the process to equilibrium at time $t$, where the \emph{total variation distance} $\dvt(\mu,\nu)$ between two probability measures $\mu$ and $\nu$ on a finite set $E$ is defined by the formula
 \begin{equation*}
 \dvt(\mu, \nu) = \frac{1}{2}\sum_{x\in E}\abs{\mu(x)-\nu(x)}.
 \end{equation*}
 Let $(t_{N})_{N\in\mathbf{N}}$ be a sequence of times. We say that $(X_{N})_{N\in\mathbf{N}}$ exhibits a \textit{cutoff} in total variation distance at time $(t_{N})_{N\in\mathbf{N}}$ if for all $\epsilon > 0$,
\begin{equation*}
\mathrm{d}_{N}((1-\epsilon)t_{N}) \xrightarrow[N\rightarrow\infty]{} 1 \quad \text{  and  } \quad  \mathrm{d}_{N}((1+\epsilon)t_{N}) \xrightarrow[N\rightarrow\infty]{} 0.
\end{equation*}
This means that the convergence to equilibrium occurs through a sharp phase transition, falling rapidly from $1$ to $0$ around time $t_{N}$\footnote{We do here (and sometimes in the sequel) a common abuse of notations, not writing the sequence indices. We also do not always write integer parts for random walk times.}.

To get a better understanding of this phenomenon, one may try to zoom in on the window where the ``fall'' occurs. The cutoff phenomenon tells us that the width of this window is negligible with respect to the sequence $(t_{N})_{N\in \N}$, and the next step is therefore to find the next significant ``higher order term''. Here is a way to formalize this. If there exists a sequence $(w_{N})_{N\in\N}$ and a continuous function $f$ decreasing from 1 to 0 such that for all $c\in\R$,
\begin{equation*}
\mathrm{d}_{N}(t_{N} +cw_{N}) \xrightarrow[N\rightarrow\infty]{} f(c),
\end{equation*}
then we say that $f$ is the \textit{cutoff profile} or \textit{limit profile} of $(X_{N})_{N\in\N}$.

Computing the cutoff profile is a difficult task in general, but it could already be done for some important families of Markov chains and commonly involves important probability distributions shaping the profile. For instance, for the lazy random walk on the hypercube (which is equivalent to the Ehrenfest Urn) we have by \cite{Voit1996, coursSalez}
\begin{equation*}
\mathrm{d}_{N}\left(\frac{1}{2}N\ln(N) + cN\right) \xrightarrow[N\rightarrow\infty]{} \dvt\left(\mathcal{N}\left(e^{-c},1\right), \mathcal{N}\left(0,1\right)\right),
\end{equation*}
involving Gaussian distributions. Similar profiles were found for the dovetail shuffle \cite{BayerDiaconis1992}, simple exclusion process on the circle \cite{Lacoin2016}, Ehrenfest Urn with multiple urns \cite{NestoridiThomas}, or Gibbs Sampler \cite{NestoridiThomas}.
For random transpositions, we have by \cite{teyssier2019limit}
\begin{equation*}
\mathrm{d}_{N}\left(\frac{1}{2}(N\ln(N) + cN)\right) \xrightarrow[N\rightarrow\infty]{} \dvt\left(\Poiss\left(1+e^{-c}\right), \Poiss\left(1\right)\right),
\end{equation*}
involving Poisson distributions. The same profile appears also for $k$-cycles \cite{NestoridiThomas}.

This last result on random transpositions, by the second-named author, is one of the motivations of the present article, where we endeavour to compute the cutoff profile for some specific processes. One important difference however is that we will not work with finite classical groups, but with \emph{infinite compact quantum groups}.

Compact quantum groups were introduced by S.L. Woronowicz in \cite{woronowicz1995compact} as a generalization of classical compact groups. In particular, many results from the representation theory of compact groups carry on to this setting, providing tools similar to those used in the study of random transpositions. A recent work of the first-named author \cite{freslon2017cutoff} showed that indeed, there are natural quantum Markov chains on compact quantum groups exhibiting a cutoff phenomenon in a way paralleling the classical case. However, the cutoff profile was not studied there.

In the present paper, we will push further the study of the cutoff phenomenon for stochastic processes on compact quantum groups in two ways. First, we will consider continuous processes instead of discrete ones and second, we will study and describe the cutoff profiles.

The most natural continuous process on a simple compact Lie group is certainly Brownian motion. Recall that this is the process whose diffusion kernel is the heat kernel corresponding to the canonical Riemannian structure on the group. Unfortunately, for quantum analogues of compact Lie groups there is to our knowledge no canonical Riemannian-like structure available to provide an analogue of the heat kernel. However, a result of M. Liao in \cite{liao2004levy} shows that if $(g_{t})_{t\in \R_{+}}$ is a Lévy process on a simple compact Lie group which is invariant under the adjoint action, then its infinitesimal generator is the sum of the Laplace-Beltrami operator (which is the infinitesimal generator of Brownian motion) and a ``jump part'' given by a so-called Lévy measure. It turns out that a similar decomposition also holds for some compact quantum groups. Indeed, F. Cipriani, U. Franz and A. Kula proved in \cite{cipriani2012symmetries} that on the quantum orthogonal group $O_{N}^{+}$, there exists a distinguished process $(\psi_{t})_{t\in \R_{+}}$ such that for any Lévy process which is invariant under the adjoint action, the corresponding infinitesimal generator splits as the sum of the infinitesimal generator of $(\psi_{t})_{t\in \R_{+}}$ and a ``jump part'' characterized by a Lévy measure. As a consequence, $(\psi_{t})_{t\in \R_{+}}$ can be seen as an analogue of Brownian motion.

Our main result is the computation in Section \ref{sec:orthogonal} of the cutoff profile for this Brownian motion on the quantum orthogonal group $O_{N}^{+}$, a compact quantum group which can be thought of as analogue of the group $SO(N)$, for which the cutoff phenomenon was proven by P.-L. Méliot in \cite{meliot2014cut}. More precisely, we prove in Theorem \ref{thm:completeprofileorthogonal} that for any $c \in \R$ and suitable extensions $\widetilde{\mathrm{d}}_{N}$ of the distances $\mathrm{d}_{N}$ to the quantum setting,
\begin{equation*}
\widetilde{\mathrm{d}}_{N} \left(N\ln(N) + cN\right) \xrightarrow[N\rightarrow\infty]{} \dvt\left(\FPoiss\pg e^{2c}, -e^{-c}\pd\ast\delta_{e^{c} + e^{-c}}, \nu_{\mathrm{SC}}\right).
\end{equation*}
where $\nu_{\mathrm{SC}}$ denotes the semi-circle distribution and $\FPoiss$ denotes the free Poisson distribution. It is known that the correspondence between $SO(N)$ (or rather $O(N)$) and $O_{N}^{+}$ has to do, at the probabilistic level, with the Bercovici-Pata bijection \cite{bercovici1999stable}. From that point of view, the appearance of the semi-circle distribution in the cutoff profile is quite satisfying. On the contrary, the appearance of the free Poisson distribution is surprising because it is not a priori a ``deformation'' of the semi-circle distribution. The picture becomes clearer when written in terms of \emph{free Meixner distributions} (see Section \ref{subsec:limit_profile} for the definition) :
\begin{equation*}
\widetilde{\mathrm{d}}_{N} \left(N\ln(N) + cN\right) \xrightarrow[N\rightarrow\infty]{} \dvt\left(\FMeix \pg-e^{-c}, 0\pd\ast\delta_{e^{-c}}, \FMeix(0, 0)\right).
\end{equation*}

Let us briefly comment on the proof. On the one hand, the quantum group $O_{N}^{+}$ is easier to study than $SO(N)$, because its representation theory is simpler (the underlying combinatorics is essentially that of the representation theory of $SU(2)$). This enables to reduce the problem to the study of a classical process on the interval $[-N, N]$. But this is compensated by an analytic issue which is absent from the classical case : the measure associated with the quantum process has atoms as soon as $c < 0$, hence is not absolutely continuous with respect to the limiting distribution. These issues are the translation of a failure of absolute continuity of the process with respect to the Haar measure, which is a purely quantum phenomenon (see Proposition \ref{prop:absolutecontinuity}).
As a consequence, our strategy is first to compute the cutoff profile for $c > 0$ in Proposition \ref{prop:rightprofileorthogonal}, where we have absolute continuity and can therefore reduce the problem to the convergence of the densities, and then to guess from it the form of the cutoff profile for $c<0$. With this in hand and a method inspired from P. Biane in \cite{biane2008introduction}, we are then able to compute the measure of the process also for $c < 0$ and prove the convergence to the cutoff profile in Proposition \ref{thm:negativeprofile}.

In the end of Section \ref{sec:orthogonal}, we investigate other types of convergence and prove that the convergence to the cutoff profile for $c > 0$ also occurs in $L^{p}$-norm for all $1\leqslant p\leqslant \infty$. Let us mention that for $c < 0$, the aforementioned analytic issues enter the picture again, making the very definition of the $L^{p}$-norm problematic, but we nevertheless have convergence of the absolutely continuous part. We furthermore investigate analogues of Brownian motion on some homogeneous spaces of $O_{N}^{+}$ called \emph{free real spheres}, the computations essentially boiling down to the previous ones for $O_{N}^{+}$.

The article concludes in Section \ref{sec:permutations} with a second family of examples called the quantum permutation groups and denoted by $S_{N}^{+}$. Despite bearing strong analogies with the classical permutation group $S_{N}$ justifying its name, $S_{N}^{+}$ is an ``infinite'' compact quantum group. In particular, it has a well-defined Brownian motion, given by a Lévy-Khintchine decomposition similar to that of $O_{N}^{+}$. After computing its cutoff profile, we turn to a problem which was left open in \cite{freslon2017cutoff} : the \emph{quantum random transposition walk}. Here, the absolute continuity issue of the orthogonal case becomes critical : the measure of the corresponding classical process always has an atom so that we cannot resort to densities for $c > 0$. We therefore have to resort to another idea, which is to compare the process with the so-called \emph{pure quantum transposition random walk} and prove that they asymptotically coincide. This is a specifically quantum phenomenon connected to the fact that the pure quantum transposition walk has no periodicity issue because there is no quantum alternating group. More precisely, we show in Theorem \ref{cor:profileparesse} that for $c > 0$,
\begin{equation*}
\widetilde{\mathrm{d}}_{N}\left(\frac{1}{2}(N\ln(N) + cN)\right) \xrightarrow[N\rightarrow\infty]{} \dvt\left( D_{\sqrt{1+e^{-c}}}\left(\FMeix\pg\frac{1-e^{-c}}{\sqrt{1+e^{-c}}}, \frac{-e^{-c}}{1+e^{-c}}\pd\right)\ast\delta_{e^{-c}}, \FMeix(1, 0)\right).
\end{equation*}
As $\FMeix(1, 0) = \FPoiss(1,1)\ast\delta_{-1}$ is the standard free Poisson distribution, this provides a quantum analogue of the result of \cite{teyssier2019limit}.

\subsection*{Acknowledgments}
The authors are indebted to Uwe Franz for pointing out to them the article \cite{biane2008introduction}, the ideas of which helped to improve significantly the results of an earlier version of the present work, and to P.-L. Méliot for discussions on topics linked to the subject of the present paper. A.F. and S.W. were partially funded by the ANR grant ``Noncommutative analysis on groups and quantum groups'' (ANR-19-CE40-0002) and the PHC Polonium ``Quantum structures and processes'', A.F. was also partially funded by the ANR grant ``Operator algebras and dynamics on groups'' (ANR-19-CE40-0008), the PHC Procope ``Quantum groups and quantum probability'' and the PHC Van Gogh ``Quantum groups, harmonic analysis and quantum probability''. S.W. was also partially supported by a public grant as part of the Fondation Mathématique Jacques Hadamard.

\section{Preliminaries}

Compact quantum groups will be one of our main objects of studies in this work, and the one the probabilist reader may be least acquainted with. We will therefore devote this preliminary section to some definitions and fundamental results concerning them. In order to keep things simple, we will only introduce free orthogonal quantum groups for the moment, as well as some results concerning Lévy processes on them. Details on quantum permutation groups will be given when needed later on.

\subsection{Free orthogonal quantum groups}

Free orthogonal quantum groups are examples of compact quantum groups in the sense of S.L. Woronowicz \cite{woronowicz1995compact} which were first introduced by Sh.{} Wang in \cite{wang1995free}. The original definition uses C*-algebras, as may be expected for objects of noncommutative topological nature. We will nevertheless use a different definition which we believe may be easier to understand for the non-expert reader, by focusing first on the purely algebraic aspects. We refer to the books \cite{neshveyev2014compact} and \cite{timmermann2008invitation} for a comprehensive treatment of the theory and proofs of the main results.

\subsubsection{Definition and representation theory}
We recall that a $*$-algebra is an algebra $A$ endowed with an involution $x\mapsto x^*$, i.e. an antimultiplicative linear map such that $(x^* )^* =x$ and $(\lambda x)^* = \bar \lambda x^*$ for all $x\in A$ and $\lambda\in \mathbf C $. Also, a $*$-ideal $B$ of $A$ is a $*$-subalgebra of $A$ such that $\{ba,ab\} \subset B$ for all $a\in A$ and $b\in B$.

\begin{de}
We define $\O(O_{N}^{+})$ to  be the universal $*$-algebra generated by $N^{2}$ \emph{self-adjoint} elements $u_{ij}$ (i.e. $u_{ij}^{*} = u_{ij}$) such that for all $1\leqslant i, j \leqslant N$,
\begin{equation*}
\sum_{k=1}^{N}u_{ik}u_{jk} = \delta_{ij} = \sum_{k=1}^{N}u_{ki}u_{kj}.
\end{equation*}
In other words, 
$$ \O(O_{N}^{+})=\mathbf C \langle u_{ij}:1\leqslant i, j \leqslant N \rangle/I , $$
where $\mathbf C \langle u_{ij}:1\leqslant i, j \leqslant N \rangle$ denotes the $*$-algebra of noncommutative polynomials in variables $u_{ij}, u_{ij}^*$ with $1\leqslant i, j \leqslant N$, and $I$ denotes the $*$-ideal generated by the elements
\begin{equation*}
\ag u_{ij}^{*} - u_{ij} , \sum_{k=1}^{N}u_{ik}u_{jk} - \delta_{ij}, \sum_{k=1}^{N}u_{ki}u_{kj} - \delta_{ij}, 1\leqslant i, j \leqslant N \ad.
\end{equation*}
\end{de}

Let $O_{N}$ be the usual orthogonal group, let $c_{ij} : O_{N} \to \C$ be the function sending a matrix to its $(i, j)$-th coefficient and let $\O(O_{N})$ be the algebra of \emph{regular functions} on $O_{N}$, i.e. the $*$-algebra generated by the functions $c_{ij}$, where the involution corresponds to the complex conjugation : $c_{ij}^{*} = \overline{c_{ij}}$.
Then, quotienting $\O(O_{N}^{+})$ by its commutator ideal yields a surjection
\begin{equation*}
\pi : \O(O_{N}^{+}) \to \O(O_{N})
\end{equation*}
so that $O_{N}^{+}$ can be seen as a ``noncommutative version'' of $O_{N}$. The group structure can be encoded in this setting thanks to the following remark : for any two orthogonal matrices $g$ and $h$,
\begin{equation*}
c_{ij}(gh) =  \sum_{k=1}^{N} c_{ik}(g) c_{kj}(h) = \sum_{k=1}^{N} (c_{ik}\otimes c_{kj})(g,h),
\end{equation*}
where we identify $\O(O_{N}\times O_{N})$ with $\O(O_{N})\otimes \O(O_{N})$. The ``group law'' of $\O(O_{N}^{+})$ will therefore be given by the unique $*$-homomorphism $\D : \O(O_{N}^{+})\to \O(O_{N}^{+})\otimes \O(O_{N}^{+})$, called the \emph{comultiplication}, such that
\begin{equation*}
\D(u_{ij}) = \sum_{k=1}^{N} u_{ik}\otimes u_{kj}.
\end{equation*}
The existence of $\D$ follows from the universal property of $\O(O_{N}^{+})$.

Probability measures can be generalized to this setting by identifying them with their integration linear form. They then correspond to \emph{states}, i.e. linear maps
\begin{equation*}
\psi : \O(O_{N}^{+})\to \C
\end{equation*}
such that $\psi(1) = 1$ and $\psi(x^{*}x) \geqslant 0$ for all $x$. There is a particular state which plays the rôle of the uniform measure on $O_{N}^{+}$ :

\begin{thm}[Woronowicz]
There is a unique state $h$ on $\O(O_{N}^{+})$ such that for all $x\in \O(O_{N}^{+})$,
\begin{equation*}
(\id\otimes h)\circ\D(x) = h(x)\otimes 1 = (h\otimes\id)\circ\D(x).
\end{equation*}
It is called the \emph{Haar state} of $O_{N}^{+}$.
\end{thm}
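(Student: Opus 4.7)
The plan is to treat uniqueness and existence separately, following the standard framework for compact quantum groups.

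For uniqueness, one argues by a short algebraic computation. Suppose $h_{1}$ and $h_{2}$ are two states both satisfying the invariance identities. Applying $h_{2}$ to $(\id\otimes h_{1})\circ\D(x) = h_{1}(x)\cdot 1$ yields $(h_{2}\otimes h_{1})(\D(x)) = h_{1}(x)$, while applying $h_{1}$ to the right-invariance equation $(h_{2}\otimes\id)\circ\D(x) = h_{2}(x)\cdot 1$ yields $(h_{2}\otimes h_{1})(\D(x)) = h_{2}(x)$. Hence $h_{1} = h_{2}$.

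For existence, the plan is to proceed via a Peter--Weyl-type approach. The defining matrix $u = (u_{ij})$ is unitary in the algebraic sense (since $u_{ij}^{*} = u_{ij}$ and $uu^{t} = u^{t}u = I$), which forces every finite-dimensional comodule of $\O(O_{N}^{+})$ to be a subcomodule of a tensor power of $u$ and therefore to be equivalent to a unitary one. The usual abstract argument then produces a Peter--Weyl decomposition $\O(O_{N}^{+}) = \bigoplus_{\alpha} C_{\alpha}$, where $\alpha$ ranges over equivalence classes of irreducible unitary corepresentations $u^{\alpha} = (u^{\alpha}_{ij})$ and $C_{\alpha}$ denotes the linear span of the matrix coefficients of $u^{\alpha}$. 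One defines $h$ on this decomposition by $h(1) = 1$ (with $1 \in C_{0}$ spanning the trivial component) and $h|_{C_{\alpha}} = 0$ for all nontrivial $\alpha$. The invariance identities then follow immediately from $\D(u^{\alpha}_{ij}) = \sum_{k} u^{\alpha}_{ik}\otimes u^{\alpha}_{kj}$: both sides vanish unless $\alpha$ is trivial, in which case they coincide with $\delta_{ij}\cdot 1$.

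The main obstacle is positivity, i.e.\ the verification that $h(x^{*}x) \geqslant 0$, which is what distinguishes $h$ from a mere invariant linear functional on a Hopf $*$-algebra. The standard route is to establish Woronowicz-type Schur orthogonality relations
\begin{equation*}
h\bigl((u^{\alpha}_{ij})^{*} u^{\beta}_{kl}\bigr) = \delta_{\alpha\beta}\,\delta_{ik}\,\frac{(Q^{\alpha})_{jl}}{\mathrm{Tr}(Q^{\alpha})},
\end{equation*}
where $Q^{\alpha}$ is a positive invertible intertwiner between $u^{\alpha}$ and its double contragredient. Once these hold, the Gram matrix of any finite family of matrix coefficients is a tensor of positive matrices, and positivity of $h$ follows. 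For $O_{N}^{+}$ specifically, where the irreducible representations are indexed by $\N$ with $SU(2)$-like fusion rules, the matrices $Q^{\alpha}$ can be written down explicitly, which would simplify the verification in this concrete case.
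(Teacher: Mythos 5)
The paper does not prove this theorem: it is stated in the preliminaries as a result of Woronowicz, with a pointer to the standard textbook treatments, so there is no in-house argument to compare against. Your uniqueness computation is correct and complete. Your existence sketch follows the standard CQG-algebra / Peter--Weyl route and is sound in outline, but two places carry real technical weight and would need to be made explicit. The decomposition $\O(O_{N}^{+}) = \bigoplus_{\alpha} C_{\alpha}$ must be available \emph{before} the Haar state exists, and the non-circular route is: every finite-dimensional comodule is unitarizable, hence orthogonal complements of subcomodules are subcomodules and every comodule is completely reducible, hence the coalgebra is cosemisimple; your phrase ``the usual abstract argument then produces'' hides exactly the step that prevents circularity here. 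And the positive invertible intertwiner $Q^{\alpha}$ is itself obtained by unitarizing the conjugate corepresentation $\bar{u}^{\alpha}$, a separate (if short) argument; for $O_{N}^{+}$ the self-conjugacy $\bar{u} = u$ does make this lighter, as you observe. Neither point is an error, but both constitute the nontrivial content of the positivity argument you correctly flag as the main obstacle; as written, this is a sketch rather than a proof.
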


Since the founding works of P. Diaconis and his coauthors, it is known that representation theory is a powerful tool to study the asymptotic behaviour of random walks on groups (see for instance \cite[Chap 4]{diaconis1988group}). For $O_{N}^{+}$, the representation theory was computed by T. Banica in \cite{banica1996theorie}. However, for our purpose we will only need to understand the subalgebra $\O(O_{N}^{+})_{\mathrm{central}}$ generated by the characters of the irreducible representations (we refer the reader for instance to \cite[Sec 1.3]{neshveyev2014compact} for the definitions of these notions and details).

\begin{thm}[Banica]
Let us set $\chi_{0} = 1$ and $\chi_{1} = \sum_{i=1}^{N} u_{ii}$. Then, the irreducible representations of $O_{N}^{+}$ are labelled by the integers such that if $\chi_{n}$ denotes the character associated to the integer $n\in\N$, we have the recurrence relation :
\begin{equation*}
\forall n \geqslant 1, \quad \chi_{1}\chi_{n} = \chi_{n+1} + \chi_{n-1}.
\end{equation*}
\end{thm}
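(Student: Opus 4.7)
The plan is to invoke Woronowicz's Tannaka-Krein reconstruction theorem and to identify the intertwiner category of $O_N^+$ with the Temperley-Lieb category at parameter $\delta=N$. The starting observation is that the defining relations $\sum_k u_{ik}u_{jk} = \delta_{ij} = \sum_k u_{ki}u_{kj}$ together with $u_{ij}^*=u_{ij}$ say exactly that the matrix $u=(u_{ij})$ is unitary and equal to its conjugate, or equivalently that the vector $t=\sum_{k=1}^N e_k\otimes e_k\in\C^N\otimes\C^N$ is an intertwiner $t\in\Mor(1,u\otimes u)$. The universal property of $\O(O_N^+)$ tells us that $(u,t)$ is the universal such pair, so by Tannaka-Krein the concrete tensor C*-category generated in $\Rep(O_N^+)$ by $u$ and $t$ is all of $\Rep(O_N^+)$.

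Second, I would describe this category concretely. Any morphism between $u^{\otimes k}$ and $u^{\otimes l}$ is obtained by composing and tensoring copies of $t$, $t^{*}$ and identities, and such compositions are naturally indexed by noncrossing pair partitions (Temperley-Lieb diagrams) between $k$ top and $l$ bottom points, evaluated by the rules $t^{*}t=N\cdot\id_{\C}$ and the snake identities. The inclusion of these diagrams into $\Mor(u^{\otimes k},u^{\otimes l})$ is thus immediate; the reverse inclusion uses that, for $N\geqslant 2$, the Temperley-Lieb algebra $TL_n(N)$ is semisimple of dimension equal to the Catalan number $C_n$, which forces equality of the two categories via a standard dimension-counting argument (no further relation can be imposed without contradicting semisimplicity).

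Third, with the morphism spaces under control, the irreducibles $(u_n)_{n\in\N}$ are constructed by induction through the Jones-Wenzl projections $p_n\in\mathrm{End}(u^{\otimes n})$: set $u_0=1$, $u_1=u$, and define $u_{n+1}$ as the orthogonal complement of the $u_{n-1}$-isotypical subspace inside $u\otimes u_n$. A count of minimal idempotents in $\mathrm{End}(u\otimes u_n)\cong TL_{n+1}(N)$ shows that this decomposition consists of exactly two irreducibles, giving the fusion rule
\begin{equation*}
u\otimes u_n \simeq u_{n+1}\oplus u_{n-1}.
\end{equation*}
Taking characters yields $\chi_1\chi_n=\chi_{n+1}+\chi_{n-1}$; that every irreducible appears as some $u_n$ is then a direct consequence of Peter-Weyl, since the matrix coefficients of the $u^{\otimes n}$ generate $\O(O_N^+)$ as a $*$-algebra.

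The main obstacle, to my mind, is the identification of $\Mor(u^{\otimes k},u^{\otimes l})$ with Temperley-Lieb diagrams: the inclusion is elementary, but the surjectivity relies crucially on the semisimplicity of $TL_n(N)$ for $N\geqslant 2$ and on Tannaka-Krein reconstruction to exclude any exotic intertwiner. Once this is in place, the inductive construction of the $u_n$, their irreducibility, and the character recurrence are essentially formal consequences of the graphical calculus, and the representation theory parallels that of $SU(2)$ with the role of the quantum dimension played by $N$ in place of $2$.
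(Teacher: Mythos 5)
The paper does not include a proof of this theorem: it is stated as background and cited directly to Banica's original work \cite{banica1996theorie}, so there is no in-paper argument to compare with. Your proof follows precisely Banica's route via Tannaka--Krein reconstruction and the Temperley--Lieb category, and the overall structure is correct.

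One step, however, deserves tightening: the equality $\Mor(u^{\otimes k}, u^{\otimes l}) = \Span(\text{TL diagrams})$ is not obtained from semisimplicity together with a dimension count, since a semisimple algebra admits plenty of proper quotients and counting dimensions presupposes that you already know the dimension of the left-hand side. The correct mechanism is the universality you invoke at the outset, applied once more: the span of Temperley--Lieb diagrams is itself a concrete tensor C*-category containing $u$ and $t$, hence corresponds under Tannaka--Krein to some compact matrix quantum group $\G$ with $t\in\Mor_{\G}(1,u\otimes u)$; the universal property of $\O(O_N^+)$ then yields a surjection $\O(O_N^+)\to\O(\G)$, which on the level of intertwiners gives $\Mor_{O_N^+}(u^{\otimes k},u^{\otimes l})\subseteq\Mor_{\G}(u^{\otimes k},u^{\otimes l})=\Span(\text{TL})$, and together with the obvious inclusion this forces equality. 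Where semisimplicity (equivalently, faithfulness of $TL_n(N)\to\mathrm{End}((\C^N)^{\otimes n})$ for $N\geqslant 2$) genuinely enters is in guaranteeing that the Jones--Wenzl projections $p_n$ are nonzero, so that the $u_n$ are honest pairwise inequivalent irreducibles. Finally, $\mathrm{End}(u\otimes u_n)$ is not $TL_{n+1}(N)$ itself but the corner $(\id\otimes p_n)\,TL_{n+1}(N)\,(\id\otimes p_n)$, which is two-dimensional for $n\geqslant 1$; this corner is what produces the fusion rule $u\otimes u_n\simeq u_{n+1}\oplus u_{n-1}$ and hence the stated character recursion.
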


Note that this implies that $\chi_{n}^{*} = \chi_{n}$ for all $n\in \N$. This recurrence relation is reminiscent of Chebyshev polynomials, and one can indeed express $\chi_{n}$ in terms of $\chi_{1}$ using them. More precisely, let $(P_{n})_{n\in \N}$ be the sequence of polynomials defined by $P_{0}(X) = 1$, $P_{1}(X) = X$ and
\begin{equation*}
XP_{n}(X) = P_{n+1}(X) + P_{n-1}(X).
\end{equation*}
In particular, $P_n(2) = n+1$, $P_n(-2) = (-1)^n(n+1)$, and for $\theta \in ]0,\pi[$ and $n\in\mathbf{N}$,
\begin{equation*}
P_n(2\cos(\theta)) = \frac{\sin((n+1)\theta)}{\sin(\theta)}.
\end{equation*}
Then, the map $\iota:\chi_{n}\mapsto P_{n}$ yields an isomorphism between $\O(O_{N}^{+})_{\mathrm{central}}$ and $\C[X]$. Moreover, by \cite[Prop 1]{banica1997groupe}, the restriction of the Haar state to this subalgebra coincides with integration with respect to the semicircle distribution $\nu_{\mathrm{sc}}$. More precisely, for any $x\in \O(O_{N}^{+})_{\mathrm{central}}$ we have
\begin{equation*}
h(x)=\int_{[-2,2]}\iota(x)\mathrm{d}\nu_{\mathrm{sc}},
\end{equation*}
where $\nu_{\mathrm{sc}}$ denotes the measure on [-2,2] with density
\begin{equation*}
\mathrm{d}\nu_{\mathrm{sc}} = \frac{1}{2\pi}\sqrt{4-x^{2}}\mathrm{d}x.
\end{equation*}
The polynomials $P_{n}$ are exactly the orthogonal polynomials for this measure. Moreover, denoting by $d_n$ the dimension of the $n$-th irreducible representation, we have for $n\in \mathbf N$,
\begin{equation*}
d_n = P_n (N).
\end{equation*}

\subsubsection{Central Lévy processes}

Let us now describe what the analogue of a Lévy process is on $O_{N}^{+}$. On a classical group, this is a càdlàg stochastic process $(X_{t})_{t\in\R_{+}}$ with independent and stationary increments. In particular, if $\mu_{t}$ is the distribution of $X_{t}X_{0}^{-1}$ then we have
\begin{itemize}
\item $\mu_{0} = \delta_{\mathrm{Id}}$,
\item $\mu_{t}\ast\mu_{s} = \mu_{t+s}$,
\item $\displaystyle\lim_{t\to 0} \mu_{t} = \mu_{0}$ weakly. 
\end{itemize}
In other words, we have a right-continuous convolution semigroup of probability measures. Because this semigroup contains most of the probabilistic information about the process, and in particular concerning its asymptotic behaviour, we will focus on it and define a \emph{quantum Lévy process} to be a right-continuous convolution semigroup of states, i.e. a family $(\psi_{t})_{t\in \R_{+}}$ of states on $\O(O_{N}^{+})$ such that
\begin{itemize}
\item $\psi_{0} = \varepsilon : u_{ij}\mapsto \delta_{ij}$,
\item $\psi_{t}\ast\psi_{s} = (\psi_{t}\otimes \psi_{s})\circ\D = \psi_{t+s}$,
\item $\displaystyle\lim_{t\to 0}\psi_{t}(x) = \psi_{0}(x)$ for all $x\in \O(O_{N}^{+})$.
\end{itemize}
Let us mention that the theory of Lévy processes on compact quantum groups can be developped in full generality (not just restricted to marginals), see for instance the survey \cite{franz2006levy}.

As mentioned in the introduction, we will be interested in the case where the Lévy process is invariant under the adjoint action. In other words we will focus on states which are \emph{central}, in the sense that they are invariant by conjugation (see the beginning of \cite[Sec 6]{cipriani2012symmetries} for the definitions). By \cite[Prop 6.9]{cipriani2012symmetries}, such states have a specific form. First, there is a \emph{conditional expectation}
\begin{equation*}
\E : \O(O_{N}^{+})\to \O(O_{N}^{+})_{\mathrm{central}}\;,
\end{equation*}
that is to say a linear map satisfying $\E(x^{*}x)\geqslant 0$ for all $x$, and $\E(x) = x$ for all $x\in\O(O_{N}^{+})_{\mathrm{central}}$. Then, a state $\psi$ is central if and only if there exists a state $\widetilde{\psi}$ on $\O(O_{N}^{+})_{\mathrm{central}}$ such that
\begin{equation*}
\psi = \widetilde{\psi}\circ\E.
\end{equation*}

\begin{de}
A \emph{central Lévy process} on $O_{N}^{+}$ is a continuous convolution semigroup of states $(\psi_{t})_{t\in \R_{+}}$ on $\O(O_{N}^{+})$ such that $\psi_{t}$ is central for all $t$.
\end{de}

Central Lévy processes on $O_{N}^{+}$ were classified by F. Cipriani, U. Franz and A. Kula in \cite[Thm 10.2]{cipriani2012symmetries} through an analogue of the Lévy-Khinchine formula. Note that because of centrality, it is enough to know the image of $\chi_{n}$ for all $n\in \N$.

\begin{thm}[Cipriani-Franz-Kula]
Any central Lévy process $(\psi_{t})_{t\in \R_{+}}$ on $O_{N}^{+}$ is of the form
\begin{equation*}
\psi_{t} :\chi_{n}\mapsto P_{n}(N)e^{-t\lambda_n},
\end{equation*}
where
\begin{equation}\label{eq:quantumhunt}
\lambda_n = b\frac{P_{n}'(N)}{P_{n}(N)} + \frac{1}{P_{n}(N)}\int_{-N}^{N}\frac{P_{n}(N) - P_{n}(x)}{N-x}\mathrm{d}\nu(x)
\end{equation}
for some $b \geqslant 0$ and a finite measure $\nu$ on $[-N, N]$ such that $\nu(\{N\}) = 0$.
\end{thm}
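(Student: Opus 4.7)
The plan is to reduce the question to classifying linear functionals on the polynomial algebra $\C[X]$ that are conditionally positive at the evaluation $P\mapsto P(N)$, and then appeal to a Lévy--Khintchine-type representation.

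\emph{Step 1 (semigroup form on characters).} Using the identity $\D(\chi_n) = \sum_{i,k} u^n_{ik}\otimes u^n_{ki}$ together with Schur's lemma, which forces a central state $\psi$ on $\O(O_N^+)$ to satisfy $\psi(u^n_{ij}) = \delta_{ij}\psi(\chi_n)/P_n(N)$, one computes
$$(\psi_t \ast \psi_s)(\chi_n) = \sum_{i,k} \psi_t(u^n_{ik})\psi_s(u^n_{ki}) = \frac{\psi_t(\chi_n)\psi_s(\chi_n)}{P_n(N)}.$$
The map $t\mapsto \psi_t(\chi_n)/P_n(N)$ is therefore a continuous multiplicative semigroup starting at $1$; selfadjointness of $\chi_n$ and positivity of each $\psi_t$ then force it to be of the form $e^{-t\lambda_n}$ for some $\lambda_n\geqslant 0$, with $\lambda_0 = 0$.

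\emph{Step 2 (generator and reduction to $\C[X]$).} Differentiating at $t=0$ yields the generator $L(x) = \left.\tfrac{\dd}{\dd t}\right|_{t=0}\psi_t(x)$. By Schürmann's theory of quantum Lévy processes, $L$ is a linear functional on $\O(O_N^+)$ with $L(1) = 0$, $L(x^*) = \overline{L(x)}$, and conditional positivity $L(x^*x)\geqslant 0$ for $x\in\ker\varepsilon$. Centrality of each $\psi_t$ passes to $L$, so that $L = \widetilde L\circ\E$ for a linear functional $\widetilde L$ on $\O(O_N^+)_{\mathrm{central}}$. Via $\iota$, $\widetilde L$ becomes a functional on $\C[X]$ satisfying $\widetilde L(P_n) = -\lambda_n P_n(N)$ together with the inherited conditional positivity $\widetilde L(P^2)\geqslant 0$ for $P\in \R[X]$ with $P(N)=0$.

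\emph{Step 3 (Lévy--Khintchine representation).} Apply Schürmann's decomposition theorem, which splits any conditionally positive functional into a Gaussian (derivation) part and a jump part built from a $*$-representation and an associated cocycle. On $\C[X]$, derivations at the counit $P\mapsto P(N)$ are scalar multiples of $P\mapsto P'(N)$, contributing the term $-bP'(N)$; the non-negativity $b\geqslant 0$ comes from the fact that the Gaussian coefficient is the squared norm of the cocycle vector. For the jump part, consider the auxiliary functional $M(Q) := \widetilde L((x-N)^2 Q)$; it is positive on squares by conditional positivity, and upgrading it to positivity on all polynomials non-negative on $[-N,N]$ — via the Lukács decomposition $P = A^2 + (N^2-x^2)B^2$ of such polynomials — produces, by Haviland's theorem, a positive finite measure $\mu_0$ on $[-N,N]$ with $M(Q) = \int Q\,\dd\mu_0$. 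Setting $\dd\nu(x) = (N-x)^{-1}\dd\mu_0(x)$ then rewrites the jump part exactly as $-\int \tfrac{P(N)-P(x)}{N-x}\dd\nu(x)$, and any atom of $\mu_0$ at $x=N$ is invisible to $M$ (since $(N-x)$ vanishes there) and may be absorbed into $b$, yielding $\nu(\{N\})=0$. Finiteness of $\nu$ follows automatically from the fact that $\widetilde L$ is defined on all of $\C[X]$, since applying the formula to $P = X$ gives $\nu([-N,N]) = -b - \widetilde L(X)<\infty$.

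The main obstacle is Step 3, specifically the upgrade from positivity on squares to positivity on every polynomial non-negative on $[-N,N]$. Conditional positivity of $\widetilde L$ on the central algebra only gives $\widetilde L(R^2)\geqslant 0$ for $R(N)=0$; handling the Lukács piece $(N^2-x^2)B^2$ requires using the full quantum structure and invoking $L(x^*x)\geqslant 0$ for a well-chosen non-central element $x\in\O(O_N^+)$ whose image under $\E$ realises (a scalar multiple of) $(N^2-x^2)B^2$. This is where the genuinely noncommutative content of the theorem enters: the classical polynomial algebra $\C[X]$ alone does not encode enough positivity, and the extra information must be extracted from $\O(O_N^+)$ via the conditional expectation $\E$.
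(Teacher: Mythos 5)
The paper does not prove this theorem: it is imported verbatim from Cipriani, Franz and Kula, \cite[Thm 10.2]{cipriani2012symmetries}, so there is nothing internal to compare your argument against. Taken on its own, your outline follows a plausible route (multiplicativity of convolution on characters, differentiation to a generator, Schürmann triple, restriction to the centre), and Steps 1--2 are essentially correct modulo standard care about why centrality forces $\psi(u^n_{ij})=\delta_{ij}\psi(\chi_n)/P_n(N)$ in the Kac case.

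However, Step 3 contains a genuine gap which you yourself flag and do not close, and which is in fact the heart of the theorem. Conditional positivity of $\widetilde L$ on $\C[X]$ only gives $\widetilde L(R^2)\geqslant 0$ for $R(N)=0$; your plan to upgrade this via the Lukács decomposition $P=A^2+(N^2-x^2)B^2$ requires handling the term $(N^2-x^2)B^2$, and the sentence ``invoking $L(x^*x)\geqslant 0$ for a well-chosen non-central element $x$'' is a placeholder, not an argument. The missing quantum input is the fact that $\chi_1$ has spectrum contained in $[-N,N]$ in \emph{any} $*$-representation of $\O(O_N^+)$ (equivalently $\|\chi_1\|_{C(O_N^+)}=N$), which forces the representation $\rho$ in the Schürmann triple — or the spectral measure of $\rho(\chi_1)$ on the cocycle vector $\eta(\chi_1)$ — to be supported in $[-N,N]$. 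Without this, one cannot conclude that the Lévy measure lives on $[-N,N]$. An alternative, and likely closer to what Cipriani--Franz--Kula actually do, is to avoid Lukács altogether: use the Bochner-type theorem for the Chebyshev polynomial hypergroup to write each central state $\psi_t|_{\text{central}}$ as integration against a probability measure $\mu_t$ on $[-N,N]$, then apply the classical Lévy--Khintchine representation for convolution semigroups of measures on this (compact, dual) space. That route makes the support constraint automatic rather than something to be wrestled out of positivity on squares.
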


Comparing this formula with the one proved by M. Liao for classical compact Lie groups in \cite{liao2004levy}, we see that the process corresponding to $\lambda_n = P_{n}'(N)/P_{n}(N)$ plays a role analogous to the one associated to the Laplace-Beltrami operator. As a consequence, we will call this process the \emph{Brownian motion} on $O_{N}^{+}$.

\subsection{The cutoff phenomenon}\label{sec:preliminariescutoff}

This work is mostly concerned with the diffusion of central Lévy processes and in particular the time needed for the process to spread all over the group. This can be rigorously defined by measuring the distance between $\psi_{t}$ and the Haar state $h$. Classically, one interesting and widely used distance for this is the \emph{total variation distance}
\begin{equation*}
\dvt(\mu,\nu) = \|\mu - \nu\|_{TV} = \sup_{A\subset G}\vert\mu(A) - \nu(A)\vert,
\end{equation*}
where the supremum is taken over all Borel subsets $A$ of the classical group $G$. For quantum groups, the corresponding definition requires the introduction of a suitable version of the Borel $\sigma$-algebra.

To this end, one may consider the universal enveloping C*-algebra (see for instance \cite[Sec II.8.3]{blackadar2006operator}) $C(O_{N}^{+})$ of $\O(O_{N}^{+})$. By definition, any state on $\O(O_{N}^{+})$ has a unique extension to a state on $C(O_{N}^{+})$, hence yields an element of the \emph{Fourier-Stieltjes algebra}, which is the topological dual $C(O_{N}^{+})^{*}$ of $C(O_{N}^{+})$. In view of the Riesz representation theorem, this dual space is thought of as a noncommutative analogue of the measure algebra equipped with the total variation norm. Let us denote by $\|\cdot\|_{FS}$ the norm on this dual space and call it the \emph{Fourier-Stieltjes norm}. Moreover, the topological double dual $C(O_{N}^{+})^{**}$ of $C(O_{N}^{+})$ is known to be the \emph{universal enveloping von Neumann algebra} of $C(O_{N}^{+})$, which is regarded as a noncommutative and universal analogue of measure spaces. Using the theory of Haagerup's noncommutative $L^p$-spaces, we may easily adapt the argument in \cite[Lemma 2.6]{freslon2017cutoff} to show that 
\begin{equation*}
\frac{1}{2} \|\varphi - \psi\|_{FS} = \sup_{p\in \mathcal{P}}\vert \varphi(p) - \psi(p)\vert,
\end{equation*}
where $\mathcal{P}$ denotes the set of orthogonal projections in $C(O_{N}^{+})^{**}$ (thought of as indicator functions of Borel subsets). We omit the details of the proof since we do not need it in this paper. We will use this generalized total variation distance in our cutoff statements and write :
\begin{equation*}
\|\cdot\| = \frac{1}{2}\|\cdot\|_{FS}.
\end{equation*}

In the classical setting, a particularly important case is that both $\mu$ and $\nu$ are absolutely continuous with respect to the Haar measure. We may consider the similar situation in the quantum setting. Let us define an inner product on $\O(O_{N}^{+})$ by the formula $\langle x, y\rangle = h(xy^{*})$.
Then, taking the completion yields a Hilbert space $L^{2}(O_{N}^{+})$, and $\O(O_{N}^{+})$ embeds through left multiplication into $\B(L^{2}(O_{N}^{+}))$ (see \cite[Cor 1.7.5]{neshveyev2014compact} and the comments thereafter). The weak closure of the image is denoted by $L^{\infty}(O_{N}^{+})$ and is a von Neumann algebra. If $\varphi: \O(O_{N}^{+})\to \C$ is a linear map which extends to a normal bounded map on $L^{\infty}(O_{N}^{+})$, then $\varphi$ becomes an element of the \emph{Fourier algebra}, which is the Banach space predual $L^{\infty}(O_{N}^{+})_{*}$ of $L^{\infty}(O_{N}^{+})$ and $\|\varphi\|_{FS} = \|\varphi\|_{L^{\infty}(O_{N}^{+})_*}$ (see for instance \cite[Prop 3.14]{brannanruan2017lp}), which further implies  by \cite[Lem 2.6]{freslon2017cutoff}: 
\begin{equation*}
  \|\varphi - \psi\| = \sup_{p\in \tilde{ \mathcal{P}}}\vert \varphi(p) - \psi(p)\vert,
\end{equation*}
where $\tilde{ \mathcal{P}}$ is the set of orthogonal projections in $L^{\infty}(O_{N}^{+})$. Note that in order for this formula to make sense, the states $\varphi$ and $\psi$ must extend to the von Neumann algebra $L^{\infty}(O_{N}^{+})$. This is not always the case due to absolute continuity issues (see for instance Proposition \ref{prop:absolutecontinuity}). Let us mention an elementary but useful fact on the monotonicity of that norm which is well-known in the classical case.

\begin{lem}\label{lem:decreasing}
For $N$ fixed, the map $t\mapsto \|\psi_{t} - h\|$ is decreasing.
\end{lem}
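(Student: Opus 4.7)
The plan is to exploit the convolution semigroup property together with the bi-invariance of the Haar state to write $\psi_{t+s} - h$ as a convolution of $\psi_t - h$ with a state, and then to invoke submultiplicativity of the Fourier--Stieltjes norm under convolution.

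First I would establish that $h \ast \psi = \psi \ast h = h$ for any state $\psi$ on $\O(O_N^+)$. Indeed, by the defining property of the Haar state, for any $x \in \O(O_N^+)$,
\[
(\psi \otimes h) \circ \D(x) = \psi\bigl((\id \otimes h) \circ \D(x)\bigr) = \psi(h(x)\cdot 1) = h(x),
\]
and symmetrically on the other side. Combined with the convolution semigroup identity $\psi_{t+s} = \psi_t \ast \psi_s$, this yields, for all $s, t \geqslant 0$,
\[
\psi_{t+s} - h = \psi_t \ast \psi_s - h \ast \psi_s = (\psi_t - h) \ast \psi_s.
\]

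Next I would use the fact that the Fourier--Stieltjes algebra $C(O_N^+)^*$, equipped with the convolution product $(\varphi \ast \omega)(x) = (\varphi \otimes \omega)(\D(x))$, is a Banach algebra. This is a standard consequence of the fact that $\D$ extends to a unital $*$-homomorphism $C(O_N^+) \to C(O_N^+) \otimes C(O_N^+)$: one then has $\|\D(x)\| \leqslant \|x\|$ and hence $\|\varphi \ast \omega\|_{FS} \leqslant \|\varphi\|_{FS}\|\omega\|_{FS}$. Since $\psi_s$ is a state, $\|\psi_s\|_{FS} = \psi_s(1) = 1$. Applying submultiplicativity to $\varphi = \psi_t - h$ and $\omega = \psi_s$ gives
\[
\|\psi_{t+s} - h\| \leqslant \|\psi_t - h\|,
\]
which is the desired monotonicity.

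I do not expect any genuine obstacle here; the only point that requires mild care is verifying that convolution is well-defined and contractive by states on the Fourier--Stieltjes algebra at the C*-algebraic level, which is standard once one knows that $\D$ extends to the universal envelope $C(O_N^+)$.
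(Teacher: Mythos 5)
Your proof is correct and follows essentially the same route as the paper's: write $\psi_{t} - h = (\psi_s - h) \ast \psi_{t-s}$ using the semigroup property and the absorbing property of the Haar state, then apply submultiplicativity of the Fourier--Stieltjes norm together with the fact that a state has norm one. The only difference is that you spell out the identity $h \ast \psi_{t-s} = h$ explicitly, which the paper leaves implicit.
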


\begin{proof}
First note that for any two bounded linear forms $\varphi, \psi$ on $C(O_{N}^{+})$,
\begin{equation*}
\|\varphi\ast\psi\|_{FS} = \|(\varphi\otimes \psi)\circ\D\|_{FS}\leqslant \|\varphi\otimes \psi\|_{FS} \leqslant \|\varphi\|_{FS}\|\psi\|_{FS}.
\end{equation*}
Thus, for any $t\geqslant s$,
\begin{equation*}
\|\psi_{t} - h\|_{FS} = \|(\psi_{s} - h)\ast\psi_{t-s}\|_{FS}\leqslant \|\psi_{s} - h\|_{FS},
\end{equation*}
where we used the fact that any state $\psi$ on a C*-algebra has norm one (see for instance \cite[Lem 9.9]{takesaki2002theory}). The result follows.
\end{proof}

The evolution of the distance from the process to the Haar state can exhibit various behaviours. One which is especially striking is the so-called \emph{cutoff phenomenon}. Here is a precise definition of what we mean by this :

\begin{de}\label{de:cutoff}
Let $(\G_{N}, (\psi_{t}^{(N)})_{t\in \R^{+}})_{N\in \N}$ be a family of compact quantum groups with a Lévy process $(\psi_{t}^{(N)})_{t\in \R^{+}}$ on each of them. We say that the processes exhibit a cutoff phenomenon at time $(t_{N})_{N\in \N}$ if for any $\epsilon > 0$,
\begin{equation*}
\lim_{N\to +\infty} \|\psi_{(1-\epsilon)t_{N}}^{(N)} - h_{N}\| = 1 \text{ and } \lim_{N\to +\infty} \|\psi_{(1+\epsilon)t_{N}}^{(N)} - h_{N}\| = 0.
\end{equation*}
\end{de}

One very useful tool to prove that such a phenomenon occurs is the following lemma originally due to P. Diaconis and M. Shahshahani in \cite{diaconis1981generating} for finite groups and to J.P. McCarthy in \cite{mccarthy2018diaconis} for finite quantum groups. A proof for compact quantum groups can be found in \cite[Lem 2.7]{freslon2017cutoff}, but we simply state it in our particular case.

\begin{lem}\label{lem:upperbound}
Let $\psi$ be a central state on $O_{N}^{+}$. If for some $t>0$ the sum $\sum_{n=1}^{+\infty}d_n^{2}e^{-2t\lambda_n} $ is finite, then
\begin{equation}\label{eq:upperbound}
\|\psi_{t} - h\|^{2} \leqslant \frac{1}{4}\sum_{n=1}^{+\infty}P_{n}(N)^{2}e^{-2t\lambda_n} .
\end{equation}
\end{lem}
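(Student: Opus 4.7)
The plan is to follow the classical Diaconis–Shahshahani upper bound strategy, transposed to the compact quantum group setting. The first step is to use the finiteness assumption $\sum_{n\geqslant 1} d_n^2 e^{-2t\lambda_n} < +\infty$ to show that $\psi_t$ admits an $L^{2}$-density $f_t \in L^{2}(O_{N}^{+})$ with respect to the Haar state, i.e.\ $\psi_{t}(x) = h(f_{t}x)$ for all $x \in \O(O_{N}^{+})$. In that case the total variation distance can be bounded by the $L^{2}$-distance via a Cauchy–Schwarz argument, giving
\begin{equation*}
\|\psi_{t} - h\| \leqslant \tfrac{1}{2}\|f_{t} - 1\|_{L^{2}(O_{N}^{+})}.
\end{equation*}
In the classical picture this is just the inequality $\|\mu - h\|_{TV} \leqslant \tfrac{1}{2}\|f - 1\|_{L^{2}}$; in the quantum setting one needs to invoke the noncommutative analogue, using that $h$ is a tracial state on the central subalgebra we are going to project onto and (more generally) a KMS state on $L^{\infty}(O_{N}^{+})$, so that the canonical inclusion $L^{2} \hookrightarrow L^{1}$ of Haagerup $L^p$-spaces is contractive.

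The second step is to compute the right-hand side using the Plancherel/Peter–Weyl decomposition of $L^{2}(O_{N}^{+})$ indexed by the irreducible representations. Since $\psi_{t}$ is central, its Fourier coefficient on the $n$-th irreducible corepresentation is the scalar $e^{-t\lambda_n}$ times the identity of dimension $d_n = P_n(N)$, while the Haar state contributes only the trivial representation. Plancherel then yields
\begin{equation*}
\|f_{t} - 1\|_{L^{2}(O_{N}^{+})}^{2} \;=\; \sum_{n\geqslant 1} d_{n}^{2}\, e^{-2t\lambda_n} \;=\; \sum_{n\geqslant 1} P_{n}(N)^{2}\, e^{-2t\lambda_n},
\end{equation*}
the factor $d_n^2$ being $d_n$ (the Plancherel weight) times $d_n$ (the trace of $\mathrm{Id}_{d_n}$). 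Combining with the previous inequality and squaring produces exactly \eqref{eq:upperbound}.

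The only delicate point, and the main obstacle in the proof, is the first step: making rigorous sense of the density $f_t$ and of the passage from total variation to $L^{2}$ norm. This is where the compact quantum group machinery (Haagerup $L^{p}$-spaces, the identification $\|\varphi\|_{FS} = \|\varphi\|_{L^{\infty}(O_{N}^{+})_{*}}$ recalled in Section \ref{sec:preliminariescutoff}, and the Peter–Weyl/Plancherel formula) replaces the classical Fubini/Cauchy–Schwarz computation. Given that this adaptation has already been carried out in \cite[Lem 2.7]{freslon2017cutoff} and \cite{mccarthy2018diaconis}, I would simply invoke those references for the general statement and specialize to the case of $O_{N}^{+}$ by inserting the values $d_{n} = P_{n}(N)$ and the eigenvalues $\lambda_n$ of Theorem on central Lévy processes.
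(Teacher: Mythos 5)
Your proposal is correct and takes the same route as the paper: indeed, the paper gives no proof of this lemma at all, but simply states it and cites \cite[Lem 2.7]{freslon2017cutoff} (and \cite{mccarthy2018diaconis}, \cite{diaconis1981generating} for the finite cases), which is exactly what you end up doing. Your sketch of the intermediate steps — the $L^2$-density $f_t$, the Cauchy–Schwarz bound $\|\psi_t - h\|\leqslant\frac12\|f_t-1\|_2$, and the Plancherel computation with $d_n = P_n(N)$ (valid here since $O_N^+$ is of Kac type, so quantum and classical dimensions agree) — is an accurate summary of what that reference proves.
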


\begin{rem}
The right-hand side is nothing but the $L^{2}$-norm of the density of $\psi_{t}-h$ with respect to $h$, computed using the Plancherel formula. We recover in that way the fact that as soon as a state has an $L^{2}$-density with respect to the Haar state, it has an extension to $L^{\infty}(O_{N}^{+})$.
\end{rem}

Let us end these preliminaries with some computational results concerning the Chebyshev polynomials introduced above. First, there is an explicit formula to compute these numbers for $t > 2$ : setting
\begin{equation*}
q(t) = \frac{t - \sqrt{t^{2} - 4}}{2},
\end{equation*}
it is easily checked by induction that
\begin{equation*}
P_{n}(t) = \frac{q(t)^{-(n+1)} - q(t)^{n+1}}{q^{-1}-q}.
\end{equation*}
This enables us to give a precise expansion of the polynomials $P_{n}$.

\begin{prop}\label{prop:dlpn}
For any $n\geqslant 1$, we have as $t\to\infty$
\begin{equation*}
P_{n}(t) = t^{n}\pg 1 - \frac{1}{t^{2}} + O\pg\frac{1}{t^{4}}\pd\pd^{n-1}.
\end{equation*}
\end{prop}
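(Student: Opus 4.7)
The plan is to leverage the closed-form expression $P_n(t) = (q^{-(n+1)} - q^{n+1})/(q^{-1} - q)$ recalled just above the statement, together with a Taylor expansion of $q(t)$ at infinity. Since $q$ is the small root of $X^2 - tX + 1 = 0$, we get the relation $q^{-1} = t - q$ for free, and the quadratic formula gives $q(t) = 1/t + O(1/t^3)$ as $t \to \infty$. Combining these yields
\begin{equation*}
q^{-1}(t) = t\bigl(1 - 1/t^2 + O(1/t^4)\bigr) \qquad \text{and hence} \qquad q^{-n}(t) = t^n\bigl(1 - 1/t^2 + O(1/t^4)\bigr)^n.
\end{equation*}

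Next I would rewrite the closed-form using the geometric-sum identity
\begin{equation*}
P_n(t) = \frac{q^{-(n+1)} - q^{n+1}}{q^{-1} - q} = q^{-n} \cdot \frac{1 - q^{2(n+1)}}{1 - q^2}.
\end{equation*}
Since $q^2 = 1/t^2 + O(1/t^4)$, the denominator factor satisfies $1 - q^2 = 1 - 1/t^2 + O(1/t^4)$, while for $n \geq 1$ the numerator factor is $1 - q^{2(n+1)} = 1 + O(1/t^{2(n+1)}) = 1 + O(1/t^4)$. Substituting the expansion of $q^{-n}$ and simplifying, one gets
\begin{equation*}
P_n(t) = t^n \bigl(1 - 1/t^2 + O(1/t^4)\bigr)^{n-1}\bigl(1 + O(1/t^4)\bigr).
\end{equation*}

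The only remaining point, which I expect to be the fussiest part, is to absorb the trailing factor $(1 + O(1/t^4))$ into the base of the $(n-1)$-th power. For $n = 1$ the power is empty and $P_1(t) = t$ is exact, so there is nothing to do. For fixed $n \geq 2$, one can take an $(n-1)$-th root of $(1 + O(1/t^4))$, which is again $1 + O(1/t^4)$, and fold it into the base to produce a new base still of the form $1 - 1/t^2 + O(1/t^4)$. Alternatively, a clean self-contained argument proceeds by induction on $n$ using the recurrence $P_{n+1} = tP_n - P_{n-1}$: factoring $(1 - 1/t^2 + O(1/t^4))^{n-2}$ out of the right-hand side leaves the residual factor $1 - 2/t^2 + O(1/t^4)$, which is exactly $(1 - 1/t^2 + O(1/t^4))^2$, yielding the required $(1 - 1/t^2 + O(1/t^4))^n$ and closing the induction.
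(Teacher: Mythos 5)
Your proposal is correct and takes essentially the same approach as the paper: both start from the factorization $P_n(t) = q^{-n}\,\dfrac{1-q^{2(n+1)}}{1-q^{2}}$ together with the expansion $q^{-1}(t) = t\left(1 - \dfrac{1}{t^{2}} + O\!\left(\dfrac{1}{t^{4}}\right)\right)$. The only difference is cosmetic---the paper passes to logarithms to combine the three contributions, while you combine them multiplicatively and absorb the residual $\left(1 + O\!\left(\dfrac{1}{t^{4}}\right)\right)$ into the base by an $(n-1)$th root, which is a legitimate equivalent of the paper's exponentiation step.
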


\begin{proof}
Let $n\geqslant 1$. First, as $t\to\infty$, we have
\begin{equation*}
q(t) = \frac{t-\sqrt{t^{2}-4}}{2} = \frac{1}{t} + \frac{1}{t^{3}} + O\pg\frac{1}{t^{5}}\pd \text{ and } q(t)^{-1} = t - \frac{1}{t} + O\pg \frac{1}{t^{3}} \pd.
\end{equation*}
We deduce, writing $q$ for $q(t)$, that for $t\geqslant 2$ we have 
\begin{equation*}
    \ln(P_n(t)) = \ln\pg \frac{q^{-(n+1)}}{q^{-1}}\frac{1-q^{2(n+1)}}{1-q^2} \pd = n\ln(q^{-1}) + \ln\pg 1-q^{2(n+1)} \pd -\ln \pg 1-q^{2} \pd.
\end{equation*}
Note that $-\ln(1-q^2) = q^2 + O\pg q^4\pd = \frac{1}{t^2} + O\pg \frac{1}{t^4}\pd$ and $2(n+1) \geqslant 4$ so $\ln\pg 1-q^{2(n+1)} \pd = O\pg \frac{1}{t^4}\pd$, hence
\begin{align*}
    \ln(P_n(t)) = n\pg \ln(t) -\frac{1}{t^2} +O\pg \frac{1}{t^4} \pd\pd + O\pg \frac{1}{t^4}\pd + \frac{1}{t^2} + O\pg \frac{1}{t^4}\pd.
\end{align*}
Hence, putting the three $O\pg \frac{1}{t^4}\pd$ together (it might change the implicit constant, but we can still take it to be independent of $n$), we finally have
\begin{equation*}
    \ln(P_n(t)) = n\ln(t) + (n-1)\pg -\frac{1}{t^2} + O\pg \frac{1}{t^4}\pd \pd,
\end{equation*}
i.e.
\begin{equation*}
P_n(t) = t^n \exp\pg -\frac{1}{t^2} + O\pg \frac{1}{t^4}\pd \pd^{n-1} = t^n \pg 1 -\frac{1}{t^2} + O\pg \frac{1}{t^4}\pd \pd^{n-1}.
\end{equation*}
\end{proof}

\section{The quantum orthogonal brownian motion}\label{sec:orthogonal}

In this section we study the cutoff phenomenon for the analogue of Brownian motion on $O_{N}^{+}$. As explained above, this means that we will take $\nu = 0$ in Equation \eqref{eq:quantumhunt}. Once that choice is made, changing the value of $b$ is equivalent to rescaling the time, so that there is no loss in generality in fixing $b = 1$ for all $N$, leading to
\begin{equation*}
\lambda_n = \frac{P_{n}'(N)}{P_{n}(N)}.
\end{equation*}

\subsection{The cutoff phenomenon}\label{subsec:cutofforthogonal}

We first want to prove that the process $(\psi_{t})_{t\in \R_{+}}$ exhibits a cutoff phenomenon. It turns out that this result can be recovered as a consequence of the existence of a cutoff profile, proven in Theorem \ref{thm:completeprofileorthogonal}, as will be explained in Remark \ref{rem:profileimpliescutoff} below. Nevertheless, the proof below is independent of the computations of the cutoff profile and is interesting in its own right. One reason for this is that, in particular as far as the lower bound is concerned, the argument below fills a gap in the proof of previous similar theorems in \cite{freslon2017cutoff} and \cite{freslon2018quantum} (see in particular Remark \ref{rem:truecutoff}). Another reason is that to compute the profile, one must know the cutoff time and guess what the next order is, which we do when proving the cutoff phenomenon. That being said, let us give some additional details concerning the Fourier-Stieltjes norm needed for the proof.

As explained in Section \ref{sec:preliminariescutoff}, proving a cutoff phenomenon requires precise estimates for the Fourier-Stieltjes norm. As soon as the right-hand side of Equation \eqref{eq:upperbound} is finite, we can use it to bound the total variation distance, which is then well-defined and coincides with the half of the Fourier-Stieltjes norm. This is the strategy which was already used in \cite{freslon2017cutoff} and the computations will be similar. To obtain the lower bound, however, we need to deal directly with the Fourier-Stieltjes norm and this will require an alternate description which we now detail.

By \cite[Lem 4.2]{brannan2011approximation}, the closure of $\O(O_{N}^{+})_{\text{central}}$ in $C(O_{N}^{+})$ is a commutative C*-algebra isomorphic to $C([-N, N])$. Moreover, if $\varphi = \widetilde{\varphi}\circ\E$ is a bounded central linear form, then
\begin{equation*}
\|\widetilde{\varphi}\|_{FS} = \left\|\varphi_{\vert \overline{\O(O_{N}^{+})_{\text{central}}}}\right\|_{FS} \leqslant \|\varphi\|_{FS} = \|\widetilde{\varphi}\circ\E\|_{FS}\leqslant \|\widetilde{\varphi}\|_{FS}
\end{equation*}
so that the problem reduces to the computation of the norm of a bounded linear form on a commutative C*-algebra. By the Riesz Representation Theorem, there exists a measure $\mu$ on $[-N, N]$ such that
\begin{equation*}
\widetilde{\varphi}(x) = \int_{-N}^{N}x\mathrm{d}\mu
\end{equation*}
and moreover, the Fourier-Stieltjes norm of $\widetilde{\varphi}$ coincides with twice the total variation of $\mu$. Using that observation, we can now establish that Brownian motion on $O_{N}^{+}$ exhibits a cutoff phenomenon.

\begin{thm}\label{thm:orthogonalcutoff}
Brownian motion on $O_{N}^{+}$ exhibits a cutoff phenomenon at time $t_{N} = N\ln(N)$.
\end{thm}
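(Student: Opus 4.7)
My plan is to treat the upper and lower bounds separately, reducing both to computations on the commutative C*-subalgebra $\overline{\O(O_N^+)_{\mathrm{central}}} \cong C([-N, N])$ inside $C(O_N^+)$, in which $\chi_1$ corresponds to the identity function. The unifying idea is that $\chi_1$ plays the role of a ``distinguishing observable'' whose mean and variance under $\psi_t$ pinpoint the cutoff window.

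For the upper bound $\|\psi_{(1+\epsilon)t_N} - h\| \to 0$, I would apply Lemma \ref{lem:upperbound}. Proposition \ref{prop:dlpn} gives $P_n(N) \leqslant N^n$ for $N$ large, uniformly in $n$, while a logarithmic differentiation of the explicit formula $P_n(N) = (q^{-(n+1)} - q^{n+1})/(q^{-1} - q)$ shows $\lambda_n \geqslant (1-\delta) n/N$ uniformly in $n \geqslant 1$ for any fixed $\delta > 0$ once $N$ is large enough. The right-hand side of Equation \eqref{eq:upperbound} is then dominated by the geometric series $\sum_n (N^2 e^{-2(1-\delta)t/N})^n$, whose common ratio at $t = (1+\epsilon) N \ln N$ is at most $N^{-\epsilon}$ for $\delta$ chosen small enough (say $\delta < \epsilon/2$), so the full sum vanishes as $N \to \infty$.

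For the lower bound $\|\psi_{(1-\epsilon)t_N} - h\| \to 1$, I use that $\psi_t - h$ is a central linear form, whose Fourier-Stieltjes norm, by the observation preceding the theorem, equals twice the total variation of the signed measure $\mu_t - \nu_{\mathrm{sc}}$ on $[-N, N]$, where $\mu_t$ and $\nu_{\mathrm{sc}}$ are the measures representing the restrictions of $\psi_t$ and $h$ to the central subalgebra. From the Cipriani--Franz--Kula formula together with the fusion rule $\chi_1^2 = \chi_2 + 1$ one computes
$$\int X\, d\mu_t = N e^{-t/N}, \qquad \int X^2\, d\mu_t = (N^2 - 1) e^{-2tN/(N^2-1)} + 1.$$
At $t = (1-\epsilon) N \ln N$ this gives $\mu_t$-mean $N^\epsilon \to \infty$ and $\mu_t$-variance tending to $1$, while $\nu_{\mathrm{sc}}$ is supported in $[-2, 2]$ with finite first two moments. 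Taking the Borel set $A = [N^\epsilon/2, N]$, Chebyshev's inequality applied to $\mu_t$ yields $\mu_t(A) \to 1$, whereas $\nu_{\mathrm{sc}}(A) = 0$ as soon as $N^\epsilon/2 > 2$. Hence $\|\psi_t - h\| \geqslant \mu_t(A) - \nu_{\mathrm{sc}}(A) \to 1$.

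The main obstacle, and the point flagged in the preamble as a gap in previous works, is the lower bound. The subtlety is not the choice of test observable (the character $\chi_1$ is the natural candidate, being the first nontrivial character of $O_N^+$), but rather the conceptual passage from quantum-algebraic separation of states to a classical total variation estimate: one must recognize that for central linear forms the Fourier-Stieltjes norm is detected already on the commutative subalgebra isomorphic to $C([-N, N])$, which legitimizes applying Chebyshev's inequality to the classical measure $\mu_t$. Once this reduction is in place, the quantitative part is routine.
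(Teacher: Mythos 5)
Your proposal is correct and follows essentially the same route as the paper: the upper bound via Lemma \ref{lem:upperbound} combined with a linear lower bound on $\lambda_n$ (of the order $n/N$) and an exponential upper bound on $P_n(N)$, and the lower bound by reducing to the commutative central subalgebra $\overline{\O(O_N^+)_{\mathrm{central}}}\cong C([-N,N])$ and applying Chebyshev's inequality to the classical distribution of $\chi_1$, exactly the point the paper singles out as the crucial ingredient. The only cosmetic variation is the choice of test set for the lower bound: the paper uses $B=\{|\chi_1|\leqslant e^{-c}/2\}$ and estimates both $\mu_t(B)$ and $\nu_h(B)$ by Chebyshev, whereas you take $A=[N^{\epsilon}/2,N]$, for which $\nu_{\mathrm{sc}}(A)=0$ exactly, so that only one Chebyshev estimate is needed — a marginally cleaner variant of the same argument.
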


\begin{proof}
We start with the upper bound and we will obtain an estimate which is finer than what is actually needed. It was proven in \cite[Lem 1.7]{franz2017hypercontractivity} that (we will give in Lemma \ref{lem:affineestimate} on a finer estimate, but that one is sufficient for our present purpose)
\begin{equation*}
\frac{n}{N}\leqslant \lambda_n\leqslant \frac{n}{N-2}.
\end{equation*}
Using this and the estimates of \cite[Lem 3.3]{freslon2017cutoff}, and writing $q=q(N)$ for simplicity, the sum in the right-hand side of Lemma \ref{lem:upperbound} applied to $\psi_{t}$ can be bounded as soon as $q^{-1}e^{-t/N} < 1$ by
\begin{equation*}
\sum_{n=1}^{+\infty}\frac{q^{-2n}}{(1-q^{2})^{2}}e^{-2tn/N} = \frac{1}{(1-q^{2})^{2}}\frac{q^{-2}e^{-2t/N}}{1 - q^{-2}e^{-2t/N}} =  \frac{1}{(1-q^{2})^{2}}\frac{1}{q^{2}e^{2t/N} - 1}.
\end{equation*}
For $c > 0$ and $t = N\ln(N) + cN$, we get, using $1/2 > q(N) > 1/N$ (see for instance \cite[Lem 3.8]{freslon2017cutoff}),
\begin{align*}
\frac{1}{(1-q^{2})^{2}}\frac{1}{q^{2}N^{2}e^{2c} - 1} & \leqslant \frac{4}{3}\frac{e^{-2c}}{1-e^{-2c}}.
\end{align*}
Taking $c = \epsilon\ln(N)$ with $\epsilon > 0$ then yields the upper bound part of the cutoff phenomenon.

For the lower bound, we will show that the character $\chi_{1}$ is a good witness of the distance between $\psi_{t}$ and $h$. To do that, let us first estimate its mean and variance. We have, for $c < 0$ and $t = N\ln(N) + cN$,
\begin{equation*}
\psi_{t}(\chi_{1}) = Ne^{-t/N} = e^{-c}
\end{equation*}
and the variance $\var_{\psi_{t}}(\chi_{1}) = \psi_{t}(\chi_{1}^{2}) - \psi_{t}(\chi_{1})^{2}$ is given, using the fact that $\chi_{1}^{2} = 1 + \chi_{2}$, by
\begin{align*}
\var_{\psi_{t}}(\chi_{1}) & = 1 + (N^{2}-1)e^{-2tN/(N^{2}-1)} - N^{2}e^{-2t/N} \\
& \leqslant 1 + N^{2}e^{-2t/N} - N^{2}e^{-2t/N} \\
& \leqslant 1 .
\end{align*}
Let us now view $\chi_{1}$ as a continuous function on $[-N, N]$ and consider the Borel subset
\begin{equation*}
B = \{s\in [-N, N] \mid \vert\chi_{1}(s)\vert\leqslant e^{-c}/2\}.
\end{equation*}
The indicator function $p = \textbf{1}_{B}$ can be seen as a projection in the von Neumann algebra $L^{\infty}([-N, N])$ of essentially bounded functions. If we denote by $\nu_{h}$ (respectively $\mu_{t}$) the unique Borel probability measure on $[-N, N]$ such that for any $x\in \O(O_{N}^{+})_{\text{central}}$,
\begin{equation*}
h(x) = \int_{-N}^{N}x\mathrm{d}\nu_{h} \quad\quad \left( \text{respectively }\quad \psi_{t}(x) = \int_{-N}^{N}x\mathrm{d}\mu_{t}\right),
\end{equation*}
then these formul\ae{} provide norm-preserving extensions of the states $h$ and $\psi_{t}$ to $L^{\infty}([-N, N])$. Moreover, because $\psi_{t}(\chi_{1}) = e^{-c}$, we have $B\subset\{s\in [-N, N] \mid \vert \chi_{1}(s) - \psi_{t}(\chi_{1})\vert\geqslant e^{-c}/2\}$ so that by Chebyshev's inequality
\begin{equation*}
\mu_{t}(B)\leqslant \left(e^{-c}/2\right)^{-2}\mathrm{var}_{\mu_{t}}(\chi_1) \leqslant 4e^{2c}.
\end{equation*}
Using again Chebyshev's inequality for $\nu_{h}$ with $h(\chi_{1}) = 0$ and $\var_{h}(\chi_{1}) = 1$, we eventually get
\begin{align*}
\vert\mu_{t}(B) - \nu_{h}(B)\vert & \geqslant \nu_{h}(B) - \mu_{t}(B) \\
& = 1 - \nu_{h}([-N, N]\setminus B) - \mu_{t}(B) \\
& \geqslant 1 - 4e^{2c} - 4e^{2c} \\
& = 1 - 8e^{2c}.
\end{align*}
To conclude, recall that because $\mu_{t}$ and $\nu_{h}$ are probability measures, the total variation norm of their difference coincides with twice their total variation distance, so that
\begin{equation*}
\|\psi_{t} - h\| = \frac{1}{2}\vert\mu_{t} - \nu_{h}\vert([-N, N]) = \|\mu_{t} - \nu_{h}\|_{TV}\geqslant \vert\mu_{t}(B) - \nu_{h}(B)\vert \geqslant 1 - 8e^{2c}.
\end{equation*}
For $c = -\epsilon\ln(N)$ ($\epsilon > 0$ fixed), the right-hand side becomes $1 - 8N^{-2\epsilon}$ which tends to $1$ as $N \to +\infty$, hence the proof is complete.
\end{proof}

\begin{rem}\label{rem:truecutoff}
In the papers \cite{freslon2017cutoff} and \cite{freslon2018quantum}, the lower bounds were only proven for $c$ such that the corresponding state is absolutely continuous with respect to the Haar state. 
As a consequence, this does not yield a cutoff phenomenon in the sense of Definition \ref{de:cutoff} unless one makes sure that the states are asymptotically always absolutely continuous.
As we will show in Proposition \ref{prop:absolutecontinuity}, and this was already observed in the aforementioned papers, this is never true. Hence, the term ``cutoff'' was slightly abusive there. However, using the same argument as in the above proof together with the estimates of \cite{freslon2017cutoff} and \cite{freslon2018quantum}, one can easily show that the random walks studied there indeed exhibit a \emph{bona fide} cutoff phenomenon for the Fourier-Stieltjes norm.
\end{rem}

\begin{rem}
In \cite{meliot2014cut}, P.-L. Méliot proved that Brownian motion on $SO(N)$ exhibits a cutoff phenomenon at time $2\ln(N)$. The factor $2$ comes from the fact that he chooses one half of the Laplace-Beltrami operator as an infinitesimal generator. As for the additional factor $N$ in our result, it could be removed through setting $b_{N} = N$. A scaling-free statement on our case would therefore be that the cutoff time $t_{N}$ satisfies $t_{N}b_{N} = N\ln(N)$ while in the case of P.-L. Méliot we have $t_{N}b_{N} = \ln(N)$.
\end{rem}

\begin{rem}
Thanks to the work of F. Cipriani, U. Franz and A. Kula \cite{cipriani2012symmetries}, it is possible to construct a non-commutative Riemannian structure (a \emph{spectral triple}) on $O_{N}^{+}$ out of a Lévy process. However, it was already noted in \cite[Sec 10]{cipriani2012symmetries} that in our case, and independently from the choice of $b$, the dimension of the resulting object is infinite, while one would expect a canonical Remiannian-like structure to be able to recover $N$ through a notion of dimension.
\end{rem}

We mentioned earlier that the use of the Fourier-Stieltjes norm was necessary because $\psi_{t}$ cannot be extended to $L^{\infty}(O_{N}^{+})$ in general. This can be thought of as an absolute continuity issue in the following sense. Let us denote by $L^{1}(O_{N}^{+})$ the completion of $L^{\infty}(O_{N}^{+})$ with respect to the norm $\|x\|_{1} = h(\vert x\vert)$, where $\vert x\vert$ is obtained by functional calculus. A state $\psi_{t}$ is then said to be \emph{absolutely continuous} (with respect to the Haar state) if there exists $\rho_{t}\in L^{1}(O_{N}^{+})$ such that $\psi_{t}(x) = h(\rho_{t}x)$ for all $x\in \O(O_{N}^{+})$. It follows from the general theory (see \cite[Thm V.2.18]{takesaki2002theory}) that a state on $\O(O_{N}^{+})$ is absolutely continuous with respect to the Haar state if and only if it extends to a normal linear map on $L^{\infty}(O_{N}^{+})$.

We now want to give a precise result about absolute continuity, and this requires a finer estimate on $\lambda_{n}$ than that of \cite[Lem 1.7]{franz2017hypercontractivity}. In fact, we will prove that $\lambda_{n}$ is very close to being affine (and even linear).

\begin{lem}\label{lem:affineestimate}
Set ${a_N} = 1/\sqrt{N^{2} - 4}$ and ${b_N} = (N-\sqrt{N^{2}-4})/(N^{2}-4)$.
Then, for any $n\in \N$ and $N\geqslant 4$,
\begin{equation*}
\lambda_n - ({a_N}n + {b_N}) = {c_N(n)} := \frac{1}{N}\sum_{j=0}^{+\infty}\left(\frac{2}{N}\right)^{2j}\left( \frac{n+1}{2^{2j}}\sum_{\underset{\ell \equiv j \text{ mod } n+1}{\ell = j+1}}^{2j}\binom{2j}{\ell}\right),
\end{equation*}
and moreover,
\begin{equation*}
0\leqslant {c_N(n)} \leqslant \left(\frac{2}{N}\right)^{n}.
\end{equation*}
\end{lem}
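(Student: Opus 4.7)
The plan is to derive an exact power series expansion of $\lambda_n$ in $1/N$ by factoring $P_n$, then extract the affine part and identify the remainder with $c_N(n)$. Since $P_n(2\cos\theta)=\sin((n+1)\theta)/\sin\theta$, the roots of $P_n$ are $2\cos(k\pi/(n+1))$ for $k=1,\ldots,n$, and a logarithmic derivative gives
\begin{equation*}
\lambda_n = \frac{P_n'(N)}{P_n(N)} = \sum_{k=1}^{n}\frac{1}{N-2\cos(k\pi/(n+1))}.
\end{equation*}
Expanding each summand as a geometric series in $1/N$ and exchanging sums yields $\lambda_n = \sum_{j\geq 0}(S_j^{(n)}-2^{j})/N^{j+1}$, where $S_j^{(n)}:=\sum_{k=0}^{n}(2\cos(k\pi/(n+1)))^{j}$ and the subtracted $2^{j}$ compensates for the artificially added $k=0$ term.

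Next I would compute $S_j^{(n)}$ via a roots-of-unity filter. Writing $(2\cos\theta)^{j}=\sum_{\ell}\binom{j}{\ell}e^{\ii(j-2\ell)\theta}$ and evaluating the resulting geometric progression in $k$, the contributions with $j$ odd pair up to $S_j^{(n)}=2^{j}$ (so no net contribution), while for $j=2j'$ one obtains
\begin{equation*}
S_{2j'}^{(n)} = (n+1)\sum_{m\in\Z}\binom{2j'}{j'+(n+1)m}.
\end{equation*}
I would then split this into the $m=0$ piece $(n+1)\binom{2j'}{j'}$ and twice the tail over $m\geq 1$. Using the classical generating functions $\sum_{j'\geq 0}\binom{2j'}{j'}/N^{2j'}=N a_N$ and $\sum_{j'\geq 1}4^{j'}/N^{2j'+1}=4/(N(N^2-4))$, the $m=0$ contributions together with the corrections $-2^{2j'}$ and the $j'=0$ summand $n/N$ simplify (using $a_N - N/(N^2-4) = -b_N$) to the affine expression advertised in the statement, while the surviving $m\geq 1$ double sum, after the substitution $\ell=j'+(n+1)m$ — whose constraints translate to $j'+1\leq \ell\leq 2j'$ and $\ell\equiv j'\pmod{n+1}$ — becomes exactly the series defining $c_N(n)$.

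Nonnegativity is then automatic from the binomial form. For the upper bound I would not re-expand the series but return to the compact form of the same tail. The cleanest route is the hyperbolic parametrisation $N=2\cosh\phi$, which (after a short computation applied to $P_n(N)=\sinh((n+1)\phi)/\sinh\phi$) gives
\begin{equation*}
\lambda_n = \frac{(n+1)\coth((n+1)\phi) - \coth\phi}{\sqrt{N^{2}-4}},
\end{equation*}
from which the tail equals $2(n+1)a_N/(r^{2(n+1)}-1)$ with $r=(N+\sqrt{N^2-4})/2$. Setting $q=1/r=2/(N+\sqrt{N^2-4})\leq 2/N$ (by rationalisation) and bounding $r^{2(n+1)}-1\geq r^{2(n+1)}/2$ for $N\geq 4$, one readily gets $c_N(n)\lesssim (n+1)(2/N)^{2(n+1)}/\sqrt{N^2-4}$, comfortably smaller than $(2/N)^n$.

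The main obstacle I anticipate is the sign and scaling bookkeeping between the two generating sums: getting the affine part to collapse to precisely $a_N n + b_N$ requires combining several $1/N$ and $1/(N^2-4)$ factors in exactly the right proportion, so I would test the identity on $n=1$ (where $\lambda_1=1/N$ provides a transparent calibration) before committing to the full computation.
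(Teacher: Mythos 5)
Your plan for the identity part is essentially the paper's own route: write $\lambda_n = \sum_{k=1}^n (N-x_k)^{-1}$ over the roots $x_k = 2\cos(k\pi/(n+1))$, expand each term geometrically, evaluate the resulting power sums of cosines by a roots-of-unity filter (so odd $j$ gives no net contribution and even $j=2j'$ produces $(n+1)\sum_m\binom{2j'}{j'+(n+1)m}$), then collapse the $m=0$ piece with the generating series $\sum_j\binom{2j}{j}x^j=(1-4x)^{-1/2}$ and $\sum_j(4x)^j$ into the affine part, leaving the $m\neq 0$ tail as $c_N(n)$. Where you genuinely diverge from the paper is the upper bound on the tail. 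The paper bounds the combinatorial tail crudely: the inner binomial sum is empty for $j<n$ and $\leq 2^{2j}$ otherwise, giving $c_N(n)\leq (n+1)(2/N)^{2n}/(N-2)\leq (2/N)^n$. You instead use the hyperbolic closed form
\begin{equation*}
\lambda_n = \frac{(n+1)\coth((n+1)\phi) - \coth\phi}{\sqrt{N^2-4}},\qquad N=2\cosh\phi,
\end{equation*}
which is immediate from $P_n(2\cosh\phi)=\sinh((n+1)\phi)/\sinh\phi$. This is a nicer argument: expanding $\coth\psi=1+\tfrac{2}{e^{2\psi}-1}$ at once isolates the affine part \emph{and} writes the tail as $\tfrac{2(n+1)a_N}{r^{2(n+1)}-1}$ with $r=e^\phi=q(N)^{-1}$, and the bound then follows from $q\leq 2/N$ with no combinatorics and a sharper constant. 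Both routes are sound; the $\coth$ one is arguably the cleaner of the two.

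The ``sign and scaling bookkeeping'' you flag at the end is not a phantom worry, and your closed form is the right tool to settle it. From $\coth\psi=1+\tfrac{2}{e^{2\psi}-1}$ one gets
\begin{equation*}
\lambda_n = a_N n + \Bigl(a_N - \tfrac{2a_N}{r^2-1}\Bigr) + \tfrac{2(n+1)a_N}{r^{2(n+1)}-1},
\end{equation*}
and since $r^2-1=r\sqrt{N^2-4}$ one has $\tfrac{2a_N}{r^2-1}=\tfrac{2q(N)}{N^2-4}=b_N$, so the constant term is $a_N - b_N$, not $b_N$. Consistently, in your combinatorial route the $m\geq 1$ and $m\leq -1$ branches of the filter are equal by symmetry of binomial coefficients, so the $m\neq 0$ tail is \emph{twice} the series in the statement. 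Your proposed $n=1$ calibration would indeed catch this: $\lambda_1=1/N$ whereas $a_N+b_N = N/(N^2-4) > 1/N$, so $\lambda_1 - (a_N\cdot 1 + b_N) < 0$, incompatible with nonnegativity; the correct bookkeeping is $\lambda_n - \bigl(a_N n + (a_N - b_N)\bigr) = 2c_N(n)$ with the series as displayed. So the gap in your outline is not a wrong idea but an unresolved calibration that you already anticipated, and the $\coth$ form resolves it in one line; in particular be careful not to equate ``the tail'' with $c_N(n)$ without the factor of $2$ when verifying $c_N(n)\leq (2/N)^n$.
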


\begin{proof}
Let $n\in\N$ and $N\geq 4$. Recall that the roots of $P_{n}$ are $x_{k} = 2\cos(k\pi/(n+1))$ for $1\leqslant k\leqslant n$, so that

\begin{equation*}
\lambda_n = \frac{P_{n}'(N)}{P_{n}(N)} = \sum_{k=1}^{n}\frac{1}{N-x_{k}} = \sum_{k=1}^{n}\frac{1}{N}\sum_{j=0}^{+\infty}\left(\frac{x_{k}}{N}\right)^{j} = \frac{1}{N}\sum_{j=0}^{+\infty}\left(\frac{2}{N}\right)^{j}\sum_{k=1}^{n}\cos\left(\frac{k\pi}{n+1}\right)^{j}.
\end{equation*}
We are therefore led to compute some sums of powers of trigonometric functions.

Let $j\in\mathbf{N}$ and set for convenience in the next computations $\theta = \frac{\pi}{n+1}$. Then we have,
\begin{align*}
\sum_{k=0}^{n}\cos(k\theta)^{j} = \sum_{k=0}^{n} \left(\frac{e^{ik\theta}+e^{-ik\theta}}{2}\right)^{j} = \sum_{k=0}^{n}\frac{1}{2^{j}}\sum_{\ell = 0}^{j}\binom{j}{\ell} e^{ik\theta(2\ell - j)} = \frac{1}{2^{j}}\sum_{\ell = 0}^{j}\binom{j}{\ell} \sum_{k=0}^{n} e^{ik\theta(2\ell - j)}.
\end{align*}
Now observe that 
\begin{equation*}
\sum_{k=0}^{n} e^{ik\theta(2\ell - j)} = \left\{
\begin{array}{ll}
0 & \mbox{if } 2(n+1) \nmid 2\ell - j \\
n+1 & \mbox{if } 2(n+1) \mid 2\ell - j.
\end{array}
\right.
\end{equation*}
From this we see immediately that if $j$ is odd, then the sum vanishes. If $j$ is even, it is possible that $2(n+1) \mid 2\ell - j$ (i.e. that $n+1 \mid \ell - j/2$), and we have to consider all such $\ell$'s. We can hence rewrite the sum as
\begin{equation*}
\sum_{k=0}^{n}\cos(k\theta)^{j} = \frac{n+1}{2^{j}}\sum_{\underset{\ell \equiv j/2 \text{ mod } n+1}{\ell = 0}}^{j}\binom{j}{\ell}.
\end{equation*}
We deduce, splitting the previous sum according to whether $\ell = j/2$ or not, that 
\begin{equation*}
\lambda_{n} + \frac{1}{N}\sum_{j=0}^{+\infty}\left(\frac{2}{N}\right)^{2j} = \frac{n+1}{N}\sum_{j=0}^{+\infty}\left(\frac{1}{N}\right)^{2j}\binom{2j}{j} + \frac{1}{N}\sum_{j=0}^{+\infty}\left(\frac{2}{N}\right)^{2j}\left( \frac{n+1}{2^{2j}}\sum_{\underset{\ell \equiv j \text{ mod } n+1}{\ell = j+1}}^{2j}\binom{2j}{\ell}\right),
\end{equation*}
i.e., observing that $(1-4x)^{-1/2} = \summ{j=0}{\infty} \binom{2j}{j}x^j$ for $\abs{x} < 1/4$,
\begin{equation*}
\lambda_{n} + \frac{1}{N}\frac{1}{1-4/N^2} = \frac{n+1}{N}\frac{1}{\sqrt{1-4/N^2}} + {c_N(n)},
\end{equation*}
which rewrites exactly as
\begin{equation*}
\lambda_{n} = {a_N}n + {b_N} + {c_N(n)}.
\end{equation*}
Let us now bound ${c_N(n)}$.
Because the sum $\sum_{\ell = j+1, \ell\equiv j \mod n+1}^{2j}$ is empty if $j < n$, we get
\begin{equation*}
{c_N(n)} = \frac{1}{N}\sum_{j=0}^{+\infty}\left(\frac{2}{N}\right)^{2j}\left( \frac{n+1}{2^{2j}}\sum_{\underset{\ell \equiv j \text{ mod } n+1}{\ell = j+1}}^{2j}\binom{2j}{\ell}\right) \leqslant \frac{1}{N}\sum_{j=n}^{+\infty}\left(\frac{2}{N}\right)^{2j}\left( \frac{n+1}{2^{2j}} 2^{2j} \right) = \frac{n+1}{N-2}\left(\frac{2}{N}\right)^{2n}.
\end{equation*}
As $N\geq 4$, we finally obtain
\begin{equation*}
    {c_N(n)} \leqslant \frac{n+1}{N-2}\left(\frac{2}{N}\right)^{2n} \leqslant \frac{1}{N-2}\frac{n+1}{2^n}\pg\frac{2}{N}\pd^n \leqslant \pg\frac{2}{N}\pd^n.
\end{equation*}
\end{proof}

We can now give a precise criterion for absolute continuity.

\begin{prop}\label{prop:absolutecontinuity}
Let $N \geqslant 4$. Then, there exists a positive time $t_{abscont}(N)$ such that
\begin{itemize}
\item If $t < t_{abscont}(N)$, then $\psi_{t}$ is not absolutely continuous with respect to the Haar state,
\item If $t > t_{abscont}(N)$, then $\psi_{t}$ is absolutely continuous with respect to the Haar state. \\
Moreover,
\begin{equation*}
t_{abscont}(N) = \frac{-\ln(q(N))}{a_{N}} = N\ln(N) - \frac{2\ln(N)}{N} + \underset{N\to+\infty}{O}\pg \frac{1}{N} \pd.
\end{equation*}
\end{itemize}
\end{prop}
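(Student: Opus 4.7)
The plan is to show that the threshold time is exactly $t_{abscont}(N) = -\ln(q(N))/a_N$, and to handle the two inequalities separately, the upper case by building an explicit $L^{2}$-density and the lower case by restricting the state to the center.

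For the sufficient direction ($t > t_{abscont}$), I would build the density by hand. Centrality of $\psi_{t}$ together with the orthonormality of the characters in $L^{2}(O_{N}^{+})$ forces any density $\rho_{t}$ to be the formal series $\rho_{t} = \sum_{n\geqslant 0} P_{n}(N) e^{-t\lambda_{n}} \chi_{n}$, whose $L^{2}$-norm by Plancherel equals $\sum_{n} P_{n}(N)^{2} e^{-2t\lambda_{n}}$. Using the exact formula $P_{n}(N) = (q^{-n-1} - q^{n+1})/(q^{-1} - q)$ together with the lower bound $\lambda_{n} \geqslant a_{N} n + b_{N}$ from Lemma \ref{lem:affineestimate}, the $n$-th summand is controlled, up to a constant, by $(q^{-2} e^{-2t a_{N}})^{n}$; this is summable exactly when $t a_{N} > -\ln(q)$, i.e. when $t > -\ln(q(N))/a_{N}$. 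In this regime $\rho_{t} \in L^{2}(O_{N}^{+}) \subset L^{1}(O_{N}^{+})$, so $\psi_{t}$ is absolutely continuous with respect to the Haar state.

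For the necessary direction ($t < t_{abscont}$), I would argue by contradiction after restricting to the center. If $\psi_{t}$ admitted an $L^{1}$-density $\rho_{t}$, the trace-preserving conditional expectation $\E$ would send it to a central $L^{1}$-density $\E(\rho_{t})$, so the Riesz-dual measure $\mu_{t}$ on $[-N,N]$ (characterised by $\widetilde{\psi_{t}}(x) = \int x\, \mathrm{d}\mu_{t}$ on $\overline{\O(O_{N}^{+})_{\text{central}}} \simeq C([-N,N])$) would be absolutely continuous with respect to $\nu_{h}$. Since $\nu_{h}$ is supported on $[-2,2]$, so would be $\mu_{t}$, yielding the moment bound $|P_{n}(N) e^{-t\lambda_{n}}| = \bigl|\int P_{n}\, \mathrm{d}\mu_{t}\bigr| \leqslant \|P_{n}\|_{L^{\infty}([-2,2])} = n+1$ for every $n$, where the last equality comes from $P_n(2\cos\theta) = \sin((n+1)\theta)/\sin\theta$ attaining its maximum $n+1$ at the endpoints. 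However, Proposition \ref{prop:dlpn} combined with Lemma \ref{lem:affineestimate} shows that $P_{n}(N) e^{-t\lambda_{n}}$ behaves as $C (q^{-1} e^{-t a_{N}})^{n}$, which grows exponentially in $n$ as soon as $q^{-1} e^{-t a_{N}} > 1$, i.e. as soon as $t < -\ln(q(N))/a_{N}$; this contradicts the polynomial bound $n+1$ for $n$ large.

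Finally the asymptotic expansion is elementary: the series $q(N) = 1/N + 1/N^{3} + O(1/N^{5})$ given before Proposition \ref{prop:dlpn} yields $-\ln q(N) = \ln(N) - 1/N^{2} + O(1/N^{4})$, while $1/a_{N} = \sqrt{N^{2} - 4} = N - 2/N + O(1/N^{3})$; multiplying these gives $t_{abscont}(N) = N\ln(N) - 2\ln(N)/N + O(1/N)$. The main obstacle I anticipate is in the necessary direction: one must identify the weak closure of $\O(O_{N}^{+})_{\text{central}}$ in $L^{\infty}(O_{N}^{+})$ with $L^{\infty}([-2,2], \nu_{h})$, not $L^{\infty}([-N,N])$, and then argue that the normal extension of $\psi_{t}$ to $L^{\infty}(O_{N}^{+})$ forces its central restriction $\mu_{t}$ to live on the $\nu_{h}$-support $[-2,2]$, so that any mass of $\mu_{t}$ outside $[-2,2]$ is detected as a genuine obstruction to absolute continuity.
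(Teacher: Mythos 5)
Your proof is correct and follows essentially the same route as the paper: the sufficient direction builds the candidate $L^{2}$-density $\rho_{t}=\sum_{n}P_{n}(N)e^{-t\lambda_{n}}\chi_{n}$ and tests its summability against the affine estimate $\lambda_{n}\approx a_{N}n+b_{N}$ of Lemma \ref{lem:affineestimate}, and the necessary direction observes that $\psi_{t}(\chi_{n})/\|\chi_{n}\|_{\infty}=P_{n}(N)e^{-t\lambda_{n}}/(n+1)$ must stay bounded for any state extending to $L^{\infty}(O_{N}^{+})$, which fails precisely when $q^{-1}e^{-ta_{N}}>1$. Your detour through the Riesz measure $\mu_{t}$ on the central subalgebra is a harmless rephrasing of the paper's direct $\|\chi_{n}\|_{\infty}$-bound, and the worry you flag at the end is in fact resolved exactly as you hope: since the Haar state restricted to $\O(O_{N}^{+})_{\mathrm{central}}$ is integration against $\nu_{\mathrm{SC}}$ on $[-2,2]$, the weak closure of the central subalgebra in $L^{\infty}(O_{N}^{+})$ is $L^{\infty}([-2,2],\nu_{\mathrm{SC}})$, so any normal extension forces $\mu_{t}$ onto $[-2,2]$ and the bound $\bigl|\int P_{n}\,\mathrm{d}\mu_{t}\bigr|\leqslant n+1$ is legitimate.
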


\begin{proof}
Let $N \geqslant 4$, and set
\begin{equation*}
\rho_{t} = \sum_{n=0}^{+\infty}d_{n}e^{-t\lambda_n}\chi_n.
\end{equation*}
If this series converges in $L^{1}(O_{N}^{+})$, then $\psi_{t}$ is absolutely continuous with respect to the Haar state with density $\rho_{t}$. Moreover, if it converges in $L^{2}(O_{N}^{+})$, then it converges also in $L^{1}(O_{N}^{+})$ (as the $L^{1}$-norm is dominated by the $L^{2}$-norm), and
\begin{equation*}
\|\rho_{t}\|_{2}^{2} = \sum_{n=0}^{+\infty}d_{n}^{2}e^{-2t\lambda_n}.
\end{equation*}
Recall that $d_{n} \leqslant q(N)^{-n}/(1-q(N)^{2})$. Using Lemma \ref{lem:affineestimate},
\begin{align*}
\ln\pg d_{n}^{2}e^{-2t\lambda_n}\pd & \leqslant 2n \ln(1/q(N)) -\ln(1-q(N)^2) -2t\lambda_n  \\
& \leqslant 2n \left(\ln(1/q(N)) -ta_{N}\right) + \ln(1/q(N)) -\ln(1-q(N)^2) - (b_{N} + c_{N})t \\
& \leqslant 2n \left(\ln(1/q(N)) -ta_{N}\right) + g(t, N)
\end{align*}
where $g(t, N)$ is a quantity independent of $n$. Consequently, $\|\rho_{t}\|_{2}$ is finite (so that $\psi_{t}$ is absolutely continuous with respect to the Haar state) as soon as
\begin{equation*}
\ln(1/q(N)) - ta_{N} < 0.
\end{equation*}

As for the second point, observe that
\begin{equation*}
\frac{\psi_{t}(\chi_{n})}{\|\chi_{n}\|_{\infty}} =  \frac{d_{n} e^{-t\lambda_n}}{n+1}.
\end{equation*}
If the right-hand side is not uniformly bounded with respect to $n$, then $\psi_{t}$ cannot extend to $L^{\infty}(O_{N}^{+})$. Using the previous lemma and the fact (see for instance \cite[Lem 3.8]{freslon2017cutoff}) that $d_n\geqslant Nq(N)^{-(n-1)}$, and proceeding as above, we see that $\frac{d_n e^{-t\lambda_n}}{n+1}$ will not be bounded as soon as
\begin{equation*}
\ln(1/q(N)) - ta_{N} > 0.
\end{equation*}
It follows from our two inequalities that $t_{abscont} = -\ln(q(N))a_{N}^{-1}$. Using the Taylor expansion of $q(N)^{-1}$ computed in the proof of \ref{prop:dlpn}, we see that $\ln(1/q(N)) = \ln(N) + O\pg \frac{1}{N^2}\pd$ so that
\begin{align*}
t_{abscont}(N) & = \ln(N) + O\left(\frac{1}{N^{2}}\right)\sqrt{N^{2}-4} \\
& = \left(N\ln(N) + O\left(\frac{1}{N}\right)\right)\left(1 - \frac{2}{N^{2}} + O\left(\frac{1}{N^{4}}\right)\right) \\
& = N\ln(N) - \frac{2\ln(N)}{N} + O\left(\frac{1}{N}\right).
\end{align*}
This concludes the proof.
\end{proof}

\begin{rem}
This is in sharp contrast with the classical case, where any non-degenerate (a condition analogous to requiring $b > 0$) Lévy process automatically has an $L^{2}$-density with respect to the Haar measure by \cite[Thm 1]{liao2004levy}, and is thus absolutely continuous.
\end{rem}

\begin{rem}
As we will see in Proposition \ref{thm:negativeprofile}, $\psi_{t}$ is absolutely continuous with respect to the Haar state if and only if the measure of the corresponding classical process is absolutely continuous with respect to the semi-circle distribution. Moreover, the lack of absolute continuity is witnessed by the appearance of an atom in that measure.
\end{rem}

In particular, $\psi_{N\ln(N)+cN}$ is absolutely continuous if and only if $c \geqslant -2\ln(N)/N^{2} + O\pg 1/N^2\pd$, which goes to $0$ as $N$ goes to infinity. Thus, the total variation distance is asymptotically only defined for $c \geqslant 0$.

\subsection{Cutoff profile}\label{subsec:limit_profile}

We will now try to get a better understanding of the cutoff phenomenon by computing the corresponding \emph{cutoff profile}, that is to say the limit of the distance between the process at time $t_c = N\ln(N) + cN$ and the Haar state as $N$ goes to infinity, $c$ being fixed. Our main result is an expression of this limit as the distance between two explicit probability measures. Before stating it, let us give some heuristics.

In the proof of Theorem \ref{thm:orthogonalcutoff}, we saw that it was enough to consider the element $\chi_{1}$ to obtain a lower bound of the correct order for the mixing time. In the case of a classical compact matrix group, $\chi_{1}$ is nothing but the trace function, and this would mean that the trace of the matrices is the last thing to be mixed by Brownian motion. In the case of $O_{N}^{+}$, we know that the distribution of $\chi_{1}$ under the Haar state is the semi-circle distribution $\nu_{\mathrm{SC}}$, so that we may expect the cutoff profile to be given by the distance between $\nu_{\mathrm{SC}}$ and a ``deformation'' of it. The whole problem of course lies in the vague meaning of the word ``deformation''.

We will show in the first part of Theorem \ref{thm:completeprofileorthogonal} that the profile indeed appears as the distance between $\nu_{\mathrm{SC}}$ and a family of closely related laws called the \emph{free Poisson distributions}. Let us recall that the free Poisson distribution with rate $\lambda$ and jump size $\alpha$ is given, for $\lambda > 1$, by (see \cite[Def 12.12]{nica2006lectures} for details)
\begin{equation*}
\mathrm{d}\FPoiss(\lambda, \alpha)(t) = \frac{1}{2\pi\alpha t}\sqrt{4\lambda\alpha^{2} - (t-\alpha(1+\lambda))^{2}}\mathrm{d}t.
\end{equation*}
Unfortunately, there is no value of the parameters for which the free Poisson distribution equals the semi-circle one.

One can nevertheless write things differently using a larger family of probability distribution called the \emph{free Meixner distributions}. Let us denote by $\FMeix(a, b)$ the standardised (i.e. with mean $0$ and variance $1$) free Meixner law with parameters $a$ and $b$ (see for instance \cite[Sec 2.2]{bozejko2006class} for details). Its absolutely continuous part with respect to the Lebesgue measure is given by
\begin{equation*}
\mathrm{d}\FMeix(a, b)(t) = \frac{\sqrt{4(1+b) - (t-a)^{2}}}{2\pi(bt^{2} + at + 1)}\mathbf{1}_{[a-2\sqrt{1+b}, a+2\sqrt{1+b}]}\mathrm{d}t.
\end{equation*}
For $a = b = 0$, the formula reduces to the density of the semi-circular distribution, while for $b = 0$ it yields the density of a free Poisson distribution with mean $0$ and variance $1$.

We will now state our result using both the free Poisson and the free Meixner settings, after introducing some extra notations. If $X$ is a random variable with law $\mu$, then we denote by $D_{r}(\mu)$ the $r$-dilation of $\mu$ (that is to say the law of $rX$) and by $\mu\ast\delta_{a}$ its translation by $a$ (that is to say the law of $X+a$). Moreover, we denote by $\dvt$ the usual total variation distance for Borel measure on $\R$ and by $\widetilde{\mathrm{d}}_{N}$ the distance associated to the norm $\|\cdot\|$ on $O_{N}^{+}$.

\begin{thm}\label{thm:completeprofileorthogonal}
Let $c\in \R$, and recall $t_{c} = N\ln(N) + cN$. Then
\begin{align*} 
\widetilde{\mathrm{d}}_{N}\pg\psi_{t_{c}}, h\pd \xrightarrow[N\to \infty]{} f(c)
:= & \dvt\pg\FPoiss\pg e^{2c}, -e^{-c}\pd\ast\delta_{e^{c} + e^{-c}}, \nu_{\mathrm{SC}}\pd \\ 
= & \dvt\pg\FMeix\pg-e^{-c}, 0\pd\ast\delta_{e^{-c}}, \FMeix(0, 0)\pd.
\end{align*}
\end{thm}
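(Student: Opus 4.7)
The first step is to translate the quantum statement to a classical one. Via the Gelfand identification $\overline{\O(O_N^+)_{\mathrm{central}}} \cong C([-N,N])$ from Section \ref{sec:preliminariescutoff}, the central state $\psi_{t_c}$ pushes forward to a Borel probability measure $\mu_{t_c}$ on $[-N,N]$ characterised by $\int P_n \, \mathrm{d}\mu_{t_c} = P_n(N) e^{-t_c \lambda_n}$, the Haar state pushes forward to the semi-circle law $\nu_{\mathrm{SC}}$ on $[-2,2]$, and $\widetilde{\mathrm{d}}_N(\psi_{t_c}, h) = \dvt(\mu_{t_c}, \nu_{\mathrm{SC}})$. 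The analysis then splits according to the sign of $c$: by Proposition \ref{prop:absolutecontinuity}, for $c > 0$ the measure $\mu_{t_c}$ is absolutely continuous with respect to $\nu_{\mathrm{SC}}$, whereas for $c < 0$ it acquires an atom outside $[-2,2]$. The case $c = 0$ will follow from the other two by monotonicity (Lemma \ref{lem:decreasing}) together with continuity of the candidate profile.

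For $c > 0$, the density of $\mu_{t_c}$ with respect to $\nu_{\mathrm{SC}}$ is the Chebyshev expansion $\rho_{t_c}(x) = \sum_{n \geqslant 0} P_n(N) e^{-t_c \lambda_n} P_n(x)$. The asymptotic $P_n(N) \sim N^n$ from Proposition \ref{prop:dlpn} combined with the affine estimate $\lambda_n \sim n/N$ from Lemma \ref{lem:affineestimate} gives $P_n(N) e^{-t_c \lambda_n} \to e^{-cn}$, and the same results provide summable dominants. Summing the resulting geometric series using $P_n(2\cos\theta) = \sin((n+1)\theta)/\sin\theta$ yields the pointwise (in fact uniform on $[-2,2]$) limit
\begin{equation*}
\rho_{t_c}(x) \xrightarrow[N \to \infty]{} \phi_c(x) := \frac{e^c}{e^c + e^{-c} - x}, \qquad x \in [-2, 2],
\end{equation*}
which one checks is precisely the density of $\FPoiss\pg e^{2c}, -e^{-c}\pd\ast\delta_{e^c + e^{-c}}$ with respect to $\nu_{\mathrm{SC}}$. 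Since both $\rho_{t_c}$ and $\phi_c$ are probability densities against $\nu_{\mathrm{SC}}$, Scheff\'e's lemma promotes pointwise convergence to $L^1$ convergence, which is the total variation statement.

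For $c < 0$, the Chebyshev expansion is no longer $L^1$-convergent, so the plan is to follow Biane's strategy through the Cauchy transform. The integral identity $\int P_n(x)/(z-x) \, \mathrm{d}\nu_{\mathrm{SC}}(x) = q(z)^{n+1}$ for $z \notin [-2, 2]$ gives the formal expression $G_{\mu_{t_c}}(z) = \sum_{n \geqslant 0} P_n(N) e^{-t_c \lambda_n} q(z)^{n+1}$; the same coefficient asymptotics as before, combined with analytic continuation of the closed-form sum, should yield
\begin{equation*}
G_{\mu_{t_c}}(z) \xrightarrow[N \to \infty]{} \frac{q(z)}{1 - e^{-c} q(z)}.
\end{equation*}
This limiting Cauchy transform has a simple pole at $z_0 := e^c + e^{-c} > 2$ with residue $1 - e^{2c}$, and its boundary values on $[-2, 2]$ recover the absolutely continuous density $\phi_c$ with respect to $\nu_{\mathrm{SC}}$. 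Stieltjes inversion thus identifies the weak limit of $\mu_{t_c}$ as $\FPoiss\pg e^{2c}, -e^{-c}\pd\ast\delta_{e^c + e^{-c}}$, in which the atom at $z_0$ carries exactly mass $1 - e^{2c}$. To upgrade weak convergence to total variation convergence, one decomposes $\mu_{t_c}$ itself as an absolutely continuous piece on or near $[-2,2]$ plus a single atom approaching $z_0$, and controls each summand separately.

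The main obstacle will be this $c < 0$ case: locating the atom of $\mu_{t_c}$ explicitly, estimating both its mass and its position to leading order in $N$, and controlling the absolutely continuous part uniformly near the singularity $z_0$ where $\phi_c$ blows up. Once the limit measure is identified, the equivalence between the $\FPoiss$ and $\FMeix$ expressions of $f(c)$ reduces to a direct calculation from the defining formulae recalled in Section \ref{subsec:limit_profile}.
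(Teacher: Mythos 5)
Your argument for $c>0$ is essentially the paper's: the coefficient bound $d_ne^{-t_c\lambda_n}\leqslant\frac{3}{2}e^{-nc}$ obtained from Proposition~\ref{prop:dlpn} and Lemma~\ref{lem:affineestimate} lets one exchange $\sum_n$ and $\lim_N$, and the closed-form sum $F_c(t)=\frac{1}{1-te^{-c}+e^{-2c}}=\frac{e^c}{e^c+e^{-c}-t}$ agrees with your $\phi_c$. Whether one phrases the last step as a dominated-convergence argument in $\ell^1$ for the Chebyshev coefficients, or as pointwise convergence of densities plus Scheff\'e, is a cosmetic difference. The identifications of the limit with $\FPoiss(e^{2c},-e^{-c})\ast\delta_{e^c+e^{-c}}$ and with the free Meixner law are then the same change-of-variables computations as Proposition~\ref{prop:rightprofileorthogonal}.

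For $c<0$ you have identified the correct limit measure and the correct structure of the argument, but you have not supplied the step that carries the technical weight in Proposition~\ref{thm:negativeprofile}, and you explicitly flag this as an ``obstacle'' rather than a routine check. The paper does not pass through the Cauchy transform at all. What it actually does is locate the atom of $m_{t_c}^{(N)}$ \emph{exactly for finite $N$}: using the affine expansion $\lambda_n=a_Nn+b_N+c_N(n)$ of Lemma~\ref{lem:affineestimate} it sets $\widetilde{N}(t)=e^{-a_Nt}q^{-1}+e^{a_Nt}q$ and $\alpha(t)=e^{a_Nt-b_Nt}\frac{e^{-a_Nt}q^{-1}-e^{a_Nt}q}{q^{-1}-q}$, proves the algebraic identity that makes $e^{-t\lambda_n}P_n(N)-\alpha(t)P_n(\widetilde{N}(t))$ exponentially small in $n$ (Lemma~\ref{lem:absolutepartnegativec}), and deduces the exact decomposition $m_{t_c}^{(N)}=\alpha(t_c)\delta_{\widetilde{N}(t_c)}+(\text{a.c.\ part on }[-2,2])$. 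That decomposition is what converts weak convergence into total-variation convergence: one treats $\mathbf{1}_{[-2,2]}$ and $\mathbf{1}_{\R\setminus[-2,2]}$ separately, the atom's mass and position converge, and the a.c.\ part converges because its Chebyshev coefficients are dominated in $\ell^2$ uniformly in $N$. Your Cauchy-transform sketch would instead have to justify that the formal series $\sum_n P_n(N)e^{-t_c\lambda_n}q(z)^{n+1}$ is the genuine Cauchy transform of $\mu_{t_c}$ on the relevant domain (delicate, since $\mu_{t_c}$ is supported on $[-N,N]$, not $[-2,2]$) and that Stieltjes inversion can be made uniform enough to yield total-variation rather than weak convergence. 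Also a small inaccuracy: for fixed $c<0$ the pole $z_0=e^c+e^{-c}$ is strictly outside $[-2,2]$, so $\phi_c$ is \emph{bounded} on $[-2,2]$; there is no singularity of the limiting a.c.\ density to control.

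One further difference worth noting: the paper pins down the limiting measure not by Stieltjes inversion but by Lemma~\ref{lem:chebyshevmoments}, showing that $\int P_n\,\mathrm{d}\mu_c$ is a Laurent polynomial in $e^c$ which equals $e^{-cn}$ for $c>0$ and hence for all $c$, and then invoking moment-determinacy of compactly supported measures. This is a cleaner route than analytic continuation of a Cauchy transform, and it reuses the $c>0$ computation you already have.
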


As the proof of this result is long, we will split it in two, depending on the sign of $c$. For convenience and clarity, the two cases will be stated in the separate Propositions \ref{prop:rightprofileorthogonal} and \ref{thm:negativeprofile}, each treated in a proper subsection.

\subsubsection{The profile on the right}\label{subsec:rightprofile}

We start with the case $c > 0$, which turns out to be the simplest one. The reason for this is that it follows from Proposition \ref{prop:absolutecontinuity} that $\psi_{t_{c}}$ is absolutely continuous with respect to the Haar state in that case, so that the convergence in total variation distance boils down to $L^{1}$-convergence of the densities, which in turn follows from $L^{2}$-convergence. The main part of the work is therefore rather the identification of the limit and its expression in terms for of Poisson or free Meixner laws.

\begin{prop}\label{prop:rightprofileorthogonal}
For $c > 0$,
\begin{align*} 
\widetilde{\mathrm{d}}_{N}\pg\psi_{t_{c}}, h\pd \xrightarrow[N\to \infty]{} f(c)
= & \dvt\pg\FPoiss\pg e^{2c}, -e^{-c}\pd\ast\delta_{e^{c} + e^{-c}}, \nu_{\mathrm{SC}}\pd \\ 
= & \dvt\pg\FMeix\pg-e^{-c}, 0\pd\ast\delta_{e^{-c}}, \FMeix(0, 0)\pd.
\end{align*}
\end{prop}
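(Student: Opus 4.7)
The plan is to exploit the absolute continuity obtained in Proposition \ref{prop:absolutecontinuity} to reduce the problem to an $L^{1}$-convergence of densities on $[-2,2]$, compute the pointwise limit, upgrade to $L^{1}$-convergence via dominated convergence in the Parseval expansion, and identify the limit with the free Poisson / free Meixner densities in the statement.

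For $c>0$ and $N$ large, Proposition \ref{prop:absolutecontinuity} ensures that $\psi_{t_{c}}$ is absolutely continuous with respect to $h$, with $L^{2}$-density
\[
\rho_{t_{c}} \;=\; \summ{n=0}{+\infty} d_{n}e^{-t_{c}\lambda_{n}}P_{n} \;\in\; L^{2}(\nu_{\mathrm{SC}}),
\]
expressed in the orthonormal basis $(P_{n})$ of $L^{2}(\nu_{\mathrm{SC}})$. Since $\psi_{t_{c}}$ is central it factors through the conditional expectation $\E$ onto the central subalgebra; combining this with the identification of the C*-closure of $\O(O_{N}^{+})_{\mathrm{central}}$ with $C([-N,N])$ recalled in Section \ref{subsec:cutofforthogonal} and the Riesz representation of $\widetilde{\psi}_{t_{c}}-\widetilde{h}$ by a signed measure concentrated on $[-2,2]$, one obtains
\[
\widetilde{\mathrm{d}}_{N}(\psi_{t_{c}},h) \;=\; \tfrac{1}{2}\|\rho_{t_{c}}-1\|_{L^{1}(\nu_{\mathrm{SC}})}.
\]

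Combining Proposition \ref{prop:dlpn} with Lemma \ref{lem:affineestimate} one checks termwise that $d_{n}e^{-t_{c}\lambda_{n}} \to e^{-cn}$ for each fixed $n$. Using the generating function identity $\summ{n\geqslant 0}{} z^{n}P_{n}(x) = (1-xz+z^{2})^{-1}$ this yields the pointwise limit
\[
\rho_{t_{c}}(x) \;\xrightarrow[N\to\infty]{}\; \rho_{\infty}(x) \;:=\; \frac{1}{1-e^{-c}x+e^{-2c}} \qquad (x\in(-2,2)).
\]
To upgrade to $L^{1}$-convergence I would use the uniform bound $d_{n}e^{-t_{c}\lambda_{n}}\leqslant e^{-cn}$, which follows from $d_{n}\leqslant N^{n}$ (Proposition \ref{prop:dlpn} for $N$ large) and $\lambda_{n}\geqslant n/N$ (Lemma \ref{lem:affineestimate}); this estimate is already implicit in the proof of Theorem \ref{thm:orthogonalcutoff}. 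By the Parseval identity,
\[
\|\rho_{t_{c}}-\rho_{\infty}\|_{L^{2}(\nu_{\mathrm{SC}})}^{2} \;=\; \summ{n=1}{+\infty}\pg d_{n}e^{-t_{c}\lambda_{n}}-e^{-cn}\pd^{2},
\]
whose summand is bounded by $4e^{-2cn}$, so that dominated convergence on $\ell^{2}(\N)$ gives $L^{2}$- and hence (by finiteness of $\nu_{\mathrm{SC}}$) $L^{1}$-convergence. This uniform $\ell^{2}$-control is the main technical point, the Chebyshev-like estimates of Proposition \ref{prop:dlpn} and Lemma \ref{lem:affineestimate} being tailored precisely for it.

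It remains to identify the limit. A direct computation gives
\[
\rho_{\infty}(x) \mathrm{d}\nu_{\mathrm{SC}}(x) \;=\; \frac{\sqrt{4-x^{2}}}{2\pi(1+e^{-2c}-e^{-c}x)}\mathrm{d}x \quad\text{on } [-2,2].
\]
Substituting $\lambda=e^{2c}$, $\alpha=-e^{-c}$ into the free Poisson density of \cite[Def.{} 12.12]{nica2006lectures} (so that $\alpha(1+\lambda)=-(e^{c}+e^{-c})$ and $2|\alpha|\sqrt{\lambda}=2$, giving support $[-(e^{c}+e^{-c})-2,-(e^{c}+e^{-c})+2]$) and translating by $e^{c}+e^{-c}$ recovers exactly this density, establishing the first equality. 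For the free Meixner formulation, $\FMeix(-e^{-c},0)$ is a standardized free Poisson (as $b=0$) whose density from \cite[Sec.{} 2.2]{bozejko2006class}, translated by $\delta_{e^{-c}}$, again matches the same expression, giving the second equality.
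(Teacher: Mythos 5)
Your proposal is correct and follows essentially the same route as the paper: reduce to $L^{1}$-convergence of the density on $[-2,2]$ via the absolute continuity from Proposition \ref{prop:absolutecontinuity}, pass to the limit using the termwise convergence $d_{n}e^{-t_{c}\lambda_{n}}\to e^{-cn}$ together with a summable uniform bound, and then identify $\sum e^{-cn}P_{n}$ with the free Poisson / free Meixner density via the Chebyshev generating series and a change of variables. The only cosmetic difference is that you upgrade through Parseval ($L^{2}\Rightarrow L^{1}$ for the probability measure $\nu_{\mathrm{SC}}$) whereas the paper bounds $\|d_{n}e^{-t_{c}\lambda_{n}}\chi_{n}\|_{1}$ term by term; both are the same dominated-convergence mechanism and your bound $d_{n}e^{-t_{c}\lambda_{n}}\leqslant e^{-cn}$ (from $d_{n}\leqslant N^{n}$ and $\lambda_{n}\geqslant n/N$) is valid.
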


\begin{proof}
Recall that as $c > 0$, $\psi_{t_{c}}$ has an $L^{1}$-density given by 
\begin{equation*}
\rho_{t_{c}} = \sum_{n=0}^{+\infty}d_{n}e^{-t_{c}\lambda_n}\chi_{n}.
\end{equation*}
Moreover, we know from Lemma \ref{lem:affineestimate} that
\begin{equation*}
\lambda_n = n\pg\frac{1}{N} + \underset{N\to \infty}{O} \left(\frac{1}{N^3}\right)\pd,
\end{equation*}
and an easy computation yields $d_{n}\underset{N\to \infty}{\sim} N^{n}$. In particular, for each $n$, $d_{n}e^{-t_{c}\lambda_n}$ converges to $e^{-cn}$ as $N$ goes to $+\infty$. Moreover,
\begin{equation*}
\|d_{n}e^{-t_{c}\lambda_n}\chi_{n}\|_{1}\leqslant \|d_{n}e^{-t_ {c}\lambda_n}\chi_{n}\|_{2} = d_{n}e^{-t_{c}\lambda_n}
\end{equation*}
and because $qN\geqslant 1$, for $N \geqslant 3$,
\begin{equation*}
d_{n}e^{-t_{c}\lambda_n} \leqslant \frac{q^{-n}}{(1-q^{2})}N^{-n}e^{-nc} \leqslant \frac{3}{2}e^{-nc}.
\end{equation*}
The latter being summable and independent of $N$, we can exchange the sum over $n$ and the limit in $N$. This yields (recall that there is an isomorphism between $\O(O_{N}^{+})_{\text{central}}$ and $\C[X]$ sending $\chi_{n}$ to $P_{n}$ and sending the measure associated to $h$ to the semi-circle distribution)
\begin{equation*}
\lim_{N\to +\infty}\|\psi_{t_{c}} - h\| = \frac{1}{2}\left\|\sum_{n=1}^{+\infty}e^{-cn}P_{n}\right\|_{1},
\end{equation*}
where the $L^{1}$-norm is computed with respect to the standard semi-circular distribution. Using the generating series of the Chebyshev polynomials of the second kind (which is easily computed, multiplying by $t$ and using the recursion relation), we get for every $t \in \cg -2,2\cd$,
\begin{align*}
\sum_{n=1}^{+\infty}e^{-cn}P_{n}(t) & = \frac{1}{1 - te^{-c} + e^{-2c}} - 1 \\
& = \frac{1}{1+\beta^{2}}\frac{1}{1-\gamma t} - 1 \\
& = F_{c}(t) - 1.
\end{align*}
where $\beta = e^{-c}$ and $\gamma = \beta/(1+\beta^{2}) < 1/2$. Thus, the cutoff profile is equal to
\begin{equation*}
\lim_{N\to +\infty}\|\psi_{t_{c}} - h\| = \frac{1}{2}\int_{-2}^{2}\left\vert F_{c}(t) - 1\right\vert\mathrm{d}\nu_{\mathrm{SC}}(t).
\end{equation*}
Performing the change of variables $u = 1-\gamma t$,
\begin{align*}
F_c(t)\mathrm{d}\nu_{\mathrm{SC}}(t) = F_{c}(t)\frac{\sqrt{4-t^{2}}}{2\pi}\mathbf{1}_{[-2, 2]}(t)\mathrm{d}t & = \frac{1}{2\pi(1+\beta^{2})u}\sqrt{4 - \left(\frac{1-u}{\gamma}\right)^{2}}\mathbf{1}_{[1-2\gamma, 1+2\gamma]}(u)\frac{\mathrm{d}u}{\gamma} \\
& = \frac{1}{2\pi\gamma^{2}(1+\beta^{2})u}\sqrt{4\gamma^{2} - \left(1-u\right)^{2}}\mathbf{1}_{[1-2\gamma, 1+2\gamma]}(u)\mathrm{d}u.
\end{align*}
Setting $\alpha = \beta\gamma = \gamma^{2}(1+\beta^{2})$ and $\lambda = \beta^{-2} > 1$, this density becomes
\begin{equation*}
\frac{1}{2\pi\alpha u}\sqrt{4\lambda\alpha^{2} - (u - \alpha(1+\lambda))^{2}}\mathbf{1}_{[\alpha(1-\sqrt{\lambda})^{2}, \alpha(1+\sqrt{\lambda})^{2}]}(u)\mathrm{d}u.
\end{equation*}
This is exactly the free Poisson distribution with rate $\lambda = e^{2c}$ and jump size $\alpha = e^{-2c}/(1+e^{-2c})$. Reversing the change of variables, we see that $F_{c}(t)\mathrm{d}\nu_{\mathrm{SC}}(t)$ is the density of the law
\begin{equation*}
D_{-1/\gamma}\left(\FPoiss\pg\beta^{-2}, -\beta\gamma\pd\ast\delta_{-1}\right) = \FPoiss\pg\beta^{-2}, -\beta\pd\ast\delta_{1/\gamma} = \FPoiss\pg e^{2c}, -e^{-c}\pd\ast\delta_{e^{c} + e^{-c}},
\end{equation*}
hence the result. Using the facts that $\FPoiss\pg a^{-2}, a\pd\ast\delta_{-a^{-1}} = \FMeix(a, 0)$ and that $\nu_{\mathrm{SC}} = \FMeix(0, 0)$, the second formula follows.
\end{proof}

As explained heuristically at the beginning of this subsection, the fact that Brownian motion is not completely mixed is witnessed by the ``trace'' it can attain, and the cutoff profile gives a precise quantitative description of this phenomenon. In particular, it shows that the ``trace'' of Brownian motion is averagely shifted to the right and more concentrated around its mean. Here is a plot of the density of $\FMeix(-e^{-c}, 0)\ast\delta_{e^{-c}}$ with respect to the Lebesgue measure for values of $c$ between $0$ and $5$ :
\begin{center}
\includegraphics[scale=2]{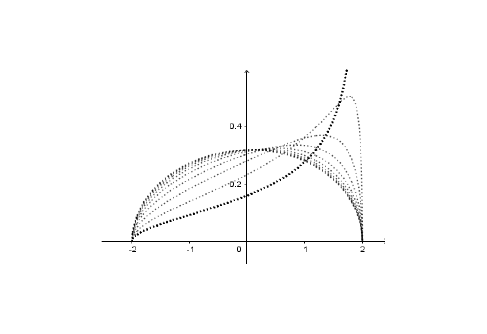}
\end{center}
For $c = 0$ we get a free Poisson law (this is the curve with a peak on the right) while for $c = 5$ the density is already indistinguishable to the naked eye from that of the semi-circle distribution.

\begin{rem}\label{rem:explicitprofile}
The quantity $\dvt\pg\FMeix\pg-e^{-c}, 0\pd\ast\delta_{e^{-c}}, \FMeix(0, 0)\pd$ can be computed explicitly in terms of $c$ through integration, yielding the formula :
\begin{equation*}
f(c) = \frac{e^{2c}-1}{2\pi}\arcsin\left(\frac{e^{-3c}-3e^{-c}}{2}\right) + \frac{1-e^{2c}}{2\pi}\arcsin\left(\frac{e^{-c}}{2}\right) + \frac{e^{-2c}+2}{4 \pi}\sqrt{4e^{2c}-1}.
\end{equation*}
Using the facts that $0 < \frac{e^{-c}}{2} < 1/2$ and that $3\arcsin(x) = \arcsin(3x - 4x^{3})$ for $0\leqslant x \leqslant 1/2$, this can be rewritten as
\begin{equation*}
f(c) = (1 - e^{2c})\frac{2}{\pi}\arcsin\left(\frac{e^{-c}}{2}\right) +  \frac{e^{-2c}+2}{4 \pi}\sqrt{4e^{2c}-1}.
\end{equation*}
\end{rem}

\subsubsection{The profile on the left}

By Proposition \ref{prop:absolutecontinuity}, the computations above can only make sense for $c \geqslant 0$. However, the free Poisson distribution makes sense even for $\lambda < 1$, with the only difference that some mass is carried by an atom :
\begin{equation*}
\mathrm{d}\FPoiss\pg\lambda, \alpha \pd (t) = \left(1-\lambda\right)\delta_{0} + \frac{\lambda}{2\pi\alpha t}\sqrt{4\lambda\alpha^{2} - (t-\alpha(1+\lambda))^{2}}\mathbf{1}_{[\alpha(1-\sqrt{\lambda})^{2}, \alpha(1+\sqrt{\lambda})^{2}]}(t)\mathrm{d}t
\end{equation*}

As a consequence, the formula
\begin{equation*}
f(c) = \dvt\pg\FPoiss\pg e^{2c}, -e^{-c}\pd\ast\delta_{e^{c} + e^{-c}}, \nu_{\mathrm{SC}}\pd
\end{equation*}
does indeed make sense for $c < 0$ and we will now show that this is indeed the profile. Let us start with a characterization of the limit distribution in terms of ``Chebyshev moments''.

\begin{lem}\label{lem:chebyshevmoments}
For any $c\in \R$, the measure
\begin{equation*}
\mu_{c} = \FPoiss\pg e^{2c},-e^{-c}\pd\ast\delta_{e^{c}+e^{-c}}
\end{equation*}
is the unique probability measure on $\R$ such that for any $n\in \N$,
\begin{equation*}
\int_{\R}P_{n}\mathrm{d}\mu_{c} = e^{-cn}.
\end{equation*}
\end{lem}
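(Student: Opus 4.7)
The strategy is to handle uniqueness and existence separately.

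For uniqueness, I would first observe that $\mu_c$ is compactly supported. Indeed, $\FPoiss(e^{2c}, -e^{-c})$ is supported on the interval $\cg -e^{-c}(1+e^c)^2, -e^{-c}(1-e^c)^2\cd$ together with an atom at $0$ of mass $1-e^{2c}$ when $c<0$; computing the endpoints after translation by $e^c + e^{-c}$ shows that $\mu_c$ is supported on $\cg -2, 2\cd$ plus a possible atom at $e^c + e^{-c}$. Since $P_n$ has degree exactly $n$, the family $(P_n)_{n\in\N}$ is a basis of $\C\cg X\cd$, so the conditions $\int P_n \dd\mu = e^{-cn}$ for all $n \in \N$ determine all polynomial moments $\int t^n \dd\mu$. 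By the Hausdorff moment problem (equivalently, by Stone--Weierstrass density of polynomials in $C(K)$ for any compact $K \subset \R$), any compactly supported probability measure matching these moments must coincide with $\mu_c$.

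For existence, the plan is to compute $\sum_{n \geqslant 0} w^n \int P_n \dd\mu_c$ directly and check that it equals $\sum_{n \geqslant 0} e^{-cn}w^n = (1 - e^{-c}w)^{-1}$. The key tool is the classical generating function
\begin{equation*}
\sum_{n \geqslant 0} P_n(t)w^n = \frac{1}{1 - wt + w^2},
\end{equation*}
which converges uniformly for $t$ in a compact set and $|w|$ sufficiently small. Integrating against $\mu_c$ and using the factorisation $1 - wt + w^2 = -w\pg t - (w + w^{-1})\pd$, one obtains
\begin{equation*}
\sum_{n \geqslant 0} w^n \int P_n \dd\mu_c = \int \frac{\dd\mu_c(t)}{1 - wt + w^2} = \frac{1}{w}\, G_{\mu_c}(w + w^{-1}),
\end{equation*}
where $G_{\mu_c}(z) = \int (z-t)^{-1}\dd\mu_c(t)$ is the Cauchy transform of $\mu_c$. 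The problem is thus reduced to checking that $G_{\mu_c}(w + w^{-1}) = w/(1 - e^{-c}w)$ for $|w|$ small.

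To compute $G_{\mu_c}$, I would use the classical formula for the Cauchy transform of a free Poisson law (satisfying the quadratic $z\alpha G^2 - (z - \alpha(\lambda-1))G + 1 = 0$) together with the translation rule $G_{\mu\ast\delta_a}(z) = G_\mu(z - a)$. Substituting $\lambda = e^{2c}$, $\alpha = -e^{-c}$, $a = e^c + e^{-c}$ and picking the branch of the square root yielding $G_{\mu_c}(z)\sim 1/z$ at infinity, one obtains after simplification
\begin{equation*}
G_{\mu_c}(z) = \frac{2e^{-c} - z + \sqrt{z^2 - 4}}{2e^{-c}\pg z - e^c - e^{-c}\pd}.
\end{equation*}
At $z = w + w^{-1}$ one has $z^2 - 4 = (w - w^{-1})^2$ and the relevant branch is $\sqrt{z^2 - 4} = w^{-1} - w$ for small $|w|$. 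The numerator then simplifies to $2e^{-c} - (w + w^{-1}) + (w^{-1} - w) = 2(e^{-c} - w)$, while the denominator factors as $2e^{-c}(w - e^c)(w - e^{-c})/w$ using $w^2 - (e^c + e^{-c})w + 1 = (w - e^c)(w - e^{-c})$. The desired identity $G_{\mu_c}(w + w^{-1}) = w/(1 - e^{-c}w)$ follows by cancellation.

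The main subtlety is tracking the correct branch of the square root, which effectively encodes whether $\mu_c$ carries an atom: for $c < 0$ the formula must reproduce a pole at $z = e^c + e^{-c}$ of residue $1 - e^{2c}$ coming from the atom of the free Poisson law at $0$, whereas for $c > 0$ the numerator has to vanish at this point so that no spurious pole survives. A more pedestrian alternative avoiding branch-cut issues would be to split according to the sign of $c$: for $c > 0$ the identity $\dd\mu_c / \dd\nu_{\mathrm{SC}} = \sum_m e^{-cm}P_m$ obtained in the proof of Proposition \ref{prop:rightprofileorthogonal}, combined with the orthonormality of $(P_n)$ with respect to $\nu_{\mathrm{SC}}$, gives $\int P_n \dd\mu_c = e^{-cn}$ immediately; for $c \leqslant 0$ one splits $\mu_c$ into its atomic and continuous parts and uses the explicit value $P_n(e^c + e^{-c}) = (e^{c(n+1)} - e^{-c(n+1)})/(e^c - e^{-c})$ together with a generating-function computation of $\int P_n(t)/(e^c + e^{-c} - t)\,\dd\nu_{\mathrm{SC}}(t)$ to verify that the cancellations produce $e^{-cn}$.
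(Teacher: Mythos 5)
Your proof is correct, and it takes a genuinely different route from the paper's. The paper argues \emph{structurally}: the free cumulants of $\FPoiss(e^{2c},-e^{-c})$ are $\kappa_n=\lambda\alpha^n=e^{2c}(-e^{-c})^n$, hence Laurent polynomials in $e^c$; the shift $\ast\delta_{e^c+e^{-c}}$ only adds to $\kappa_1$; and since moments are polynomial in cumulants, $\int P_n\dd\mu_c$ is some Laurent polynomial $L_n(e^c)$. For $c>0$ the earlier density computation (the identity $\dd\mu_c=F_c\dd\nu_{\mathrm{SC}}$ with $F_c=\sum e^{-cm}P_m$ and orthonormality of the $P_n$) forces $L_n(x)=x^{-n}$ on $(1,\infty)$, and uniqueness of Laurent expansions extends the formula to all $c$. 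You instead compute the Cauchy transform $G_{\mu_c}$ explicitly from the quadratic satisfied by free Poisson Cauchy transforms and the translation rule, then substitute $z=w+w^{-1}$ and verify the generating-function identity $\tfrac{1}{w}G_{\mu_c}(w+w^{-1})=(1-e^{-c}w)^{-1}$ by direct cancellation. Your computation goes through (I checked the discriminant collapses to $z^2-4$ and the factorisation $w^2-(e^c+e^{-c})w+1=(w-e^c)(w-e^{-c})$ indeed kills one factor), and it has the virtue of being self-contained and of making the atom for $c<0$ visibly appear as a pole of $G_{\mu_c}$. The cost is that you must track the branch of $\sqrt{z^2-4}$, a subtlety the paper sidesteps entirely: the Laurent-polynomial argument transfers the identity from $c>0$ to $c<0$ purely algebraically, with no square roots in sight. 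Your second, ``pedestrian'' alternative is essentially the paper's route for $c>0$ plus a hand computation for $c\leqslant 0$; the paper's cumulant argument replaces that hand computation by the one-line observation that a Laurent polynomial is determined by its values on $(1,\infty)$.
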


\begin{proof}
Let us first recall that the free cumulants (see \cite[Def 11.3]{nica2006lectures} for the definition of free cumulants and \cite[Prop 12.11]{nica2006lectures} for the free Poisson case) of the free Poisson distribution $\FPoiss\pg\lambda, \alpha \pd$ are given by
\begin{equation*}
\kappa_{n} = \lambda\alpha^{n}.
\end{equation*}
As a consequence, the free cumulants of $\FPoiss\pg e^{2c},-e^{-c}\pd$ are Laurent polynomials in $e^{c}$. The free additive convolution with $\delta_{e^{c}+e^{-c}}$ only modifies the first cumulant $\kappa_{1}$ by adding $e^{c}+e^{-c}$ to it, hence the free cumulants of $\mu_{c}$ are Laurent polynomials in $e^{c}$.
Because the moments are polynomial functions of the free cumulants (by virtue of the moment-cumulant formula, see \cite[Prop 11.4]{nica2006lectures}), we conclude that there exist Laurent polynomials $L_{n}$ 
such that for any $c\in \R$,
\begin{equation*}
\int_{\R}P_{n}(x)\mathrm{d}\mu_{c}(x) = L_{n}(e^{c}).
\end{equation*}
Let us now assume that $c > 0$. Then, we know from the proof of Theorem \ref{thm:completeprofileorthogonal} in Subsection \ref{subsec:rightprofile} that for $c>0$, $\mu_{c}$ is absolutely continuous with respect to the semi-circle distribution $\nu_{\text{SC}}$ with density $F_{c}$. Using the fact that the polynomials $P_{n}$ are orthonormal for the semi-circle distribution, we get
\begin{equation*}
\int_{\R}P_{n}\mathrm{d}\mu_{c} = \int_{\R}P_{n}\left(\sum_{n=0}^{+\infty}e^{-cn}P_{n}\right)\mathrm{d}\nu_{\text{SC}} = e^{-cn}.
\end{equation*}
As a consequence, $L_{n}(x) = x^{-n}$ for any $x > 1$, so that by uniqueness of the decomposition of a Laurent polynomial, $L_{n}(X) = X^{-n}$ and the formula for the integral of Chebyshev polynomials also holds for $c\leqslant 0$.

As for the uniqueness assertion, it simply follows from the fact that $\mu_{c}$ has compact support and is consequently determined by its moments.
\end{proof}

With this in hand, we can complete our proof of the cutoff profile. To do so, let us define, for $t > 0$ and $N\in \N$, a probability measure $m_{t}^{(N)}$ on $\R$ by requiring that $m_{t}^{(N)}(\R\setminus[-N, N]) = 0$ and for all $n\in \N$,
\begin{equation*}
\int_{-N}^{N}P_{n}(x)\mathrm{d}m_{t}^{(N)} = e^{-t\lambda_n}P_{n}(N).
\end{equation*}
Such a measure exists because it corresponds to the restriction of the state $\psi_{t}$ to the algebra $\O(O_{N}^{+})_{\text{central}}$. Using the centrality of the process and the Riesz representation theorem (see the discussion at the beginning of Section \ref{subsec:cutofforthogonal}), we know that
\begin{equation*}
\left\|\psi_{t_c} - h\right\| = \left\|m_{t_{c}}^{(N)} - \nu_{\text{SC}}\right\|_{TV}.
\end{equation*}
To find the limit of the right-hand side, we will compute $m_{t_{c}}^{(N)}$ explicitly. Our approach is based on an idea of P. Biane in \cite[Sec 12.2]{biane2008introduction} for the study of a similar process on the duals of free groups. One would naively want the measure to be absolutely continuous with respect to the Haar measure with density
\begin{equation*}
\sum_{n=0}^{+\infty} e^{-t\lambda_n}P_{n}(N)P_{n}
\end{equation*}
However, we know that for $t = N\ln(N) + cN$ with $c < 0$, this series does not converge in $L^{2}([-2, 2])$ because $e^{-t\lambda_n}P_{n}(N)$ grows exponentially (this is the lack of absolute continuity with respect to the Haar state). The idea is to find a point $\widetilde{N}(t)$ such that $e^{-t\lambda_n}P_{n}(N) - P_{n}(\widetilde{N}(t))$ is small enough to be summable. Then summing the previous difference to produce an absolutely continuous measure and adding a Dirac mass at $\widetilde{N}(t)$ will produce an expression of the measure. To find that $\widetilde{N}(t)$, let us first recall that for $x > 2$, if $0 < q(x) < 1$ denotes the unique number such that $q(x) + q(x)^{-1} = x$, then
\begin{equation*}
P_{n}(x) = \frac{q(x)^{-(n+1)} - q(x)^{n+1}}{q(x)^{-1} - q(x)}.
\end{equation*}
In particular, considering (with the notations of Lemma \ref{lem:affineestimate}) that $\lambda_{n}$ is roughly equal to ${a_{N}}n$, we have
\begin{align*}
e^{-t\lambda_{n}}P_{n}(N) & \approx e^{-nt{a_{N}}}\frac{q(N)^{-(n+1)} - q(N)^{n+1}}{q(N)^{-1} - q(N)} \\
& \approx e^{-nt{a_{N}}}q(N)^{-n} \\
& \approx P_{n}\left(e^{-{a_{N}}t}q(N)^{-1} + e^{{a_{N}}t}q(N)\right)
\end{align*}
so that setting
\begin{equation*}
\widetilde{N}(t) = e^{-{a_{N}}t}q(N)^{-1} + e^{{a_{N}}t}q(N)
\end{equation*}
has a good chance of giving the correct order of magnitude. There is still a normalization coefficient required, and the next result is meant to make all this precise and rigorous. We will write $q$ for $q(N)$ in order to lighten notations.

\begin{lem}\label{lem:absolutepartnegativec}
For any fixed $c < 0$ and $t_{c} = N\ln(N) + cN$, there exists $N_{c}$ such that for all $N\geqslant N_{c}$,
\begin{equation*}
m_{t_{c}}^{(N)} = \alpha(t_{c})\delta_{\widetilde{N}(t_{c})} + \sum_{n=0}^{+\infty}\pg e^{-\lambda_n t_{c}}P_{n}(N) - P_{n}(\widetilde{N}(t_{c})\pd P_{n}(y)\mathrm{d}\mu_{\text{SC}}(y),
\end{equation*}
where
\begin{equation*}
\alpha(t) = e^{{a_N}t - {b_N}t}\frac{e^{-{a_N}t}q^{-1} - e^{{a_N}t}q}{q^{-1} - q} 
\end{equation*}
and 
\begin{equation*}
\sum_{n\geqslant 0} \left(e^{-t_{c}\lambda_n}P_{n}(N) - \alpha(t_{c})P_{n}(\widetilde{N}(t_{c}))\right)P_{n} \in L^{2}([-2, 2], \nu_{\text{SC}}).
\end{equation*}
\end{lem}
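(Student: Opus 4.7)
The plan is to verify directly that the right-hand side defines a finite Borel measure on $[-N, N]$ whose integrals against the Chebyshev polynomials $P_n$ match the defining moments of $m_{t_c}^{(N)}$, and then to invoke the density of polynomials in $C([-N, N])$. Denote the candidate measure by $M_N := \alpha(t_c)\delta_{\widetilde{N}(t_c)} + \rho_{t_c}\cdot \nu_{\mathrm{SC}}$, where
\[
\rho_{t_c}(y) := \sum_{n=0}^{+\infty}\Bigl(e^{-\lambda_n t_c}P_n(N) - \alpha(t_c)P_n(\widetilde{N}(t_c))\Bigr)P_n(y).
\]
For $c < 0$ and $N$ large, $\widetilde{N}(t_c) = e^{-a_N t_c}q^{-1} + e^{a_N t_c}q$ lies in $(2, N)$, since $e^{-a_N t_c}q^{-1} \to e^{-c}$ and $e^{a_N t_c}q \to e^{c}$, so both the atomic part and the support $[-2, 2]$ of the absolutely continuous part are contained in $[-N, N]$.

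The heart of the argument is to establish the $L^2$-convergence of the series defining $\rho_{t_c}$. Setting $u := e^{-a_N t_c}q^{-1}$ and $v := e^{a_N t_c}q = u^{-1}$, so that $\widetilde{N}(t_c) = u + v$ and $q(\widetilde{N}(t_c)) = v$ for $N$ large, the closed form $P_n(x) = (q(x)^{-(n+1)} - q(x)^{n+1})/(q(x)^{-1} - q(x))$ for $x > 2$ together with the decomposition $\lambda_n = a_N n + b_N + c_N(n)$ from Lemma~\ref{lem:affineestimate} yields, after direct algebraic manipulation,
\[
e^{-\lambda_n t_c}P_n(N) - \alpha(t_c)P_n(\widetilde{N}(t_c)) = \frac{e^{-b_N t_c}}{q^{-1} - q}\Bigl[(e^{-c_N(n) t_c} - 1)\,u^n q^{-1} + \bigl(e^{(n+2)a_N t_c} - e^{-na_N t_c - c_N(n)t_c}\bigr) q^{n+1}\Bigr].
\]
The second bracketed term is dominated by $e^{(n+2)a_N t_c}q^{n+1} = v^{n+2}/q$, which, since $v \to e^c$ and $q^{-1} \sim N$, contributes after the $1/(q^{-1}-q)$ prefactor a coefficient of order $e^{cn}$; this is $\ell^2$-summable as $c<0$. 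The first bracketed term is controlled via $|e^{-c_N(n) t_c} - 1| \leq c_N(n) t_c$ combined with the refined bound $c_N(n) = O\bigl(n(2/N)^{2n}/N\bigr)$ coming from the proof of Lemma~\ref{lem:affineestimate} and $u^n q^{-1} = O(e^{-cn} N)$, so that it is geometrically dominated in the regime $N^2 \gg e^{-c}$.

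Once the $L^2$-convergence is known, $M_N$ is a well-defined finite signed measure supported in $[-N, N]$. We then compute $\int P_n\, dM_N$: the Dirac part contributes $\alpha(t_c) P_n(\widetilde{N}(t_c))$, while orthonormality of $(P_m)_m$ for $\nu_{\mathrm{SC}}$ identifies $\int \rho_{t_c}\, P_n\, d\nu_{\mathrm{SC}}$ with the $n$-th Fourier coefficient $e^{-\lambda_n t_c}P_n(N) - \alpha(t_c)P_n(\widetilde{N}(t_c))$. Their sum equals $e^{-\lambda_n t_c}P_n(N)$, which by definition coincides with $\int P_n\, dm_{t_c}^{(N)}$. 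Since polynomials are dense in $C([-N, N])$ by Stone--Weierstrass, the finite signed measures $M_N$ and $m_{t_c}^{(N)}$ induce the same continuous linear functional on $C([-N, N])$ and therefore coincide by the Riesz representation theorem; as a byproduct this confirms positivity of $M_N$. The main obstacle is the precise bookkeeping in the displayed identity: the choice of $\widetilde{N}(t_c)$ is dictated by the need to cancel the factor $q^{-n}$ responsible for the failure of absolute continuity of $\psi_{t_c}$ for $c<0$ (cf.\ Proposition~\ref{prop:absolutecontinuity}), and the normalization $\alpha(t_c)$ is then uniquely fixed by matching the leading diverging term $e^{-a_N n t_c}q^{-(n+1)}$ of $e^{-\lambda_n t_c}P_n(N)$ with the contribution of the Dirac.
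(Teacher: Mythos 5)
Your proof is correct and follows essentially the same route as the paper's: use the closed form $P_n(x) = (q(x)^{-(n+1)} - q(x)^{n+1})/(q(x)^{-1}-q(x))$ together with the decomposition $\lambda_n = a_N n + b_N + c_N(n)$ from Lemma~\ref{lem:affineestimate} to express $e^{-\lambda_n t_c}P_n(N) - \alpha(t_c)P_n(\widetilde N(t_c))$, bound the two resulting pieces to get $\ell^2$-summability (and your use of the finer bound $c_N(n) = O(n(2/N)^{2n}/N)$ from inside the proof of that lemma, rather than the cruder stated estimate $c_N(n)\leqslant(2/N)^n$, is in fact the correct way to ensure $c_N(n)t_c$ is small enough for the linearization $|e^{-c_N(n)t_c}-1|\leqslant c_N(n)t_c$ to be effective), and then conclude by matching Chebyshev moments and invoking uniqueness for compactly supported measures. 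The only difference is a slightly different factorization of the same algebraic identity, which changes nothing substantive.
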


\begin{proof}
To lighten notations, let us simply write $a, b, c$ for $a_{N}, b_{N}, c_{N}$. Since by definition $q\pg\widetilde{N}(t)\pd = (e^{{a}t}q)^{\pm 1}$ (depending on which one is less than one, but this does not change the expression of $P_{n}(\widetilde{N}(t))$),
\begin{align*}
\pg q^{-1}-q \pd \alpha(t)P_{n}\pg\widetilde{N}(t)\pd & = e^{at - bt} \pg e^{-{a}t}q^{-1} - e^{{a}t}q \pd\frac{q\pg\widetilde{N}(t)\pd^{-n-1} - q\pg\widetilde{N}(t)\pd^{n+1}}{q\pg\widetilde{N}(t)\pd^{-1} - q\pg\widetilde{N}(t)\pd} \\
& = e^{{a}t - {b}t}\pg e^{-{a}(n+1)t}q^{-(n+1)} - e^{{a}(n+1)t}q^{n+1}\pd \\
& = e^{-({a}n + {b})t}q^{-(n+1)} - e^{{a}(n+2)t - {b}t}q^{n+1}, \\
\end{align*}
from which we deduce that
\begin{align*}
    \alpha(t)P_{n}\pg\widetilde{N}(t)\pd  = e^{-({a}n + {b})t}P_n(N) +\frac{q^{n+1}}{q^{-1}-q}\pg e^{-({a}n + {b})t} - e^{(a(n+2)-b)t} \pd,
\end{align*}
which eventually leads to the equality
\begin{align*}
 \alpha(t)P_{n}\pg\widetilde{N}(t)\pd - e^{-\lambda_{n}t}P_n(N) =\pg e^{{c(n)}t} - 1 \pd e^{-\lambda_{n}t}P_{n}(N) + \frac{q^{n+1}}{q^{-1}-q}\pg e^{-(an + b)t} - e^{(a(n+2)-b)t} \pd.
\end{align*}
Using that $e^y-1\leqslant (e-1)y \leqslant 2y$ for $0\leqslant y \leqslant 1$ and using Lemma \ref{lem:affineestimate}, we see that for $N\geqslant 4$,
\begin{equation*}
0 \leqslant e^{{c(n)}t} - 1 \leqslant 2\pg\frac{2}{N}\pd^{n}.
\end{equation*}
Moreover, $a \geqslant 1/N$ and $b, c(n) \geqslant 0$ so that $e^{-\lambda_{n} t} \leqslant e^{-ant} \leqslant e^{-nt/N}$ (remember that $c < 0$). Taking $t = t_{c} = N\ln(N) + cN$ (and of course $N$ large enough so that $t_{c} > 0$), and using also  $\abs{P_{n}(N)} \leqslant N^{n}$ we can bound the first term :
\begin{equation*}
\abs{\pg e^{{c(n)}t_{c}} - 1 \pd e^{-\lambda_{n} t_{c}}P_{n}(N)} \leqslant 2\pg\frac{2}{N}\pd^{n} e^{-nt_{c}/N} N^{n} = 2 \pg\frac{2e^{-c}}{N}\pd^{n}.
\end{equation*}
Taking $N$ greater than some $N_{c}$ depending on $c$, this converges exponentially fast to $0$.

Let us now bound the second term. First observe that $q^{-1}-q = \sqrt{N^{2} - 4}$ and $e^{-(an + b)t}\leqslant 1 \leqslant e^{((n+2)-b)t}$, hence we have
\begin{equation*}
\abs{\frac{q^{n+1}}{q^{-1}-q}\pg e^{-(an + b)t} - e^{(a(n+2)-b)t} \pd} \leqslant \frac{q^{n+1}}{\sqrt{N^{2}-4}} e^{(a(n+2)-b)t} \leqslant \frac{q^{n+1}}{\sqrt{N^{2}-4}} e^{a(n+2)t}.
\end{equation*}
At $t = t_{c}$, we then get as $N\to \infty$, using $a = \frac{1}{N} + O\pg \frac{1}{N^{3}}\pd$, that
\begin{align*}
\frac{q^{n+1}}{\sqrt{N^2-4}} e^{a(n+2)t_{c}} & = \pg \frac{1}{N} + O\pg \frac{1}{N^{3}}\pd\pd^{n+2} \pg e^{a(N\ln(N) + cN)}\pd^{n+2} \\
& = \pg e^{-\ln(N) + O(1/N^{2})}\pd^{n+2} \pg e^{\ln(N) + c + O(\ln(N)/N^2)}\pd^{n+2} \\
& = \pg e^{c+o(1)}\pd^{n+2}.
\end{align*}
Hence, as $c < 0$ and $N\geqslant N_{c}$ is fixed, this term converges exponentially fast to $0$ as $n\to\infty$. Combining those two bounds, we conclude that for $N$ fixed large enough,
\begin{equation*}
\abs{\alpha(t)P_{n}\pg\widetilde{N}(t)\pd - e^{-\lambda_n t}P_n(N)}
\end{equation*}
converges exponentially fast to $0$ as $n\to\infty$. Because $(P_{n})_{n\in \N}$ is a Hilbert basis for $L^{2}([-2, 2], \nu_{\text{SC}})$, this proves that the sum stated in the lemma belongs to $L^{2}([-2, 2], \nu_{\text{SC}})$.

Now the first formula in the statement makes sense for $N$ large enough and defines a probability measure on $[-N, N]$. Moreover, the integral of any Chebyshev polynomial with respect to that measure coincides by construction with its integral with respect to $m_{t_{c}}^{(N)}$. Because Chebyshev polynomials form a basis of $\C[X]$, it follows that the two measures have the same moments and hence coincide since they have compact support.
\end{proof}

We are now ready to establish the cutoff profile for negative $c$.

\begin{prop}\label{thm:negativeprofile}
For any fixed $c < 0$ and $t_{c} = N\ln(N) + cN$, we have
\begin{equation*}
\widetilde{\mathrm{d}}_{N}(\psi_{t_{c}}, h) = \left\|m_{t_{c}}^{(N)} - \nu_{\text{SC}}\right\|_{TV}\underset{N\to+\infty}{\longrightarrow} \left\|\mathrm{Poiss}^{+}\pg e^{2c}, -e^{-c}\pd\ast\delta_{e^{c} + e^{-c}}, \nu_{\mathrm{SC}}\right\|_{TV}.
\end{equation*}
\end{prop}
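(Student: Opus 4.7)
The plan is to use the decomposition in Lemma \ref{lem:absolutepartnegativec} to write $m_{t_c}^{(N)} = \alpha(t_c)\delta_{\widetilde{N}(t_c)} + f_N\,\mathrm{d}\nu_{\mathrm{SC}}$ with $f_N = \sum_n a_n^{(N)}P_n$ and $a_n^{(N)} := e^{-\lambda_n t_c}P_n(N) - \alpha(t_c)P_n(\widetilde{N}(t_c))$, find the analogous decomposition of the limit measure $\mu_c := \FPoiss\pg e^{2c}, -e^{-c}\pd\ast\delta_{e^c + e^{-c}}$, and match them term by term. Combining $a_N = 1/N + O(1/N^3)$ from Lemma \ref{lem:affineestimate}, $q(N) = 1/N + O(1/N^3)$ from Proposition \ref{prop:dlpn}, and $b_N t_c \to 0$, one obtains $a_N t_c = \ln(N) + c + o(1)$, so that $e^{\pm a_N t_c}q(N)^{\mp 1}\to e^{\pm c}$ and hence
\begin{equation*}
\widetilde{N}(t_c)\xrightarrow[N\to\infty]{} e^c + e^{-c} > 2, \qquad \alpha(t_c)\xrightarrow[N\to\infty]{} e^c(e^{-c}-e^c) = 1 - e^{2c}.
\end{equation*}

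On the limit side, Lemma \ref{lem:chebyshevmoments} gives $\int P_n\,\mathrm{d}\mu_c = e^{-cn}$; since $q(e^c+e^{-c}) = e^c$, the explicit formula for the Chebyshev polynomials yields $(1-e^{2c})P_n(e^c+e^{-c}) = e^{-cn} - e^{c(n+2)}$, so $\mu_c$ splits as $(1-e^{2c})\delta_{e^c+e^{-c}} + g\,\mathrm{d}\nu_{\mathrm{SC}}$, where $g$ has $n$-th Chebyshev moment $e^{c(n+2)}$. The same generating-series computation as in Proposition \ref{prop:rightprofileorthogonal} then gives
\begin{equation*}
g(t) = \sum_{n=0}^{+\infty}e^{c(n+2)}P_n(t) = \frac{e^{2c}}{1-te^c+e^{2c}}.
\end{equation*}
The heart of the proof is the $L^2(\nu_{\mathrm{SC}})$-convergence $f_N\to g$, which by Parseval reduces to $\sum_n\abs{a_n^{(N)} - e^{c(n+2)}}^2 \to 0$. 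For each fixed $n$, the same asymptotic analysis yields $a_n^{(N)}\to e^{c(n+2)}$, and the bounds $\abs{a_n^{(N)}} \leqslant 2(2e^{-c}/N)^n + (e^{c+o_N(1)})^{n+2}$ extracted in the proof of Lemma \ref{lem:absolutepartnegativec} are, for $N$ large enough, uniformly dominated by the summable $2\cdot 2^{-n} + e^{(c/2)(n+2)}$. Dominated convergence for series then gives $L^2(\nu_{\mathrm{SC}})$-convergence, hence $L^1(\nu_{\mathrm{SC}})$-convergence since $\nu_{\mathrm{SC}}$ is a probability measure.

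To conclude, since $\widetilde{N}(t_c) > 2$ lies outside the support of $\nu_{\mathrm{SC}}$ for $N$ large, the atomic and absolutely continuous parts of $m_{t_c}^{(N)} - \nu_{\mathrm{SC}}$ are mutually singular, so
\begin{equation*}
\|m_{t_c}^{(N)} - \nu_{\mathrm{SC}}\|_{TV} = \frac{1}{2}\pg\alpha(t_c) + \int_{-2}^{2}\abs{f_N-1}\,\mathrm{d}\nu_{\mathrm{SC}}\pd,
\end{equation*}
and the analogous identity holds for $\mu_c - \nu_{\mathrm{SC}}$ with $1-e^{2c}$ and $g$ in place of $\alpha(t_c)$ and $f_N$. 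Combining $\alpha(t_c) \to 1-e^{2c}$ with the $L^1$-convergence $f_N \to g$, via the triangle inequality $\bigl|\int\abs{f_N-1}\,\mathrm{d}\nu_{\mathrm{SC}} - \int\abs{g-1}\,\mathrm{d}\nu_{\mathrm{SC}}\bigr| \leqslant \|f_N - g\|_{L^1(\nu_{\mathrm{SC}})}$, yields the claim. The main obstacle is the uniform summable bound on $\abs{a_n^{(N)}}$ in the middle step; this relies crucially on the sharp decay $c_N(n) \leqslant (2/N)^n$ from Lemma \ref{lem:affineestimate}, which prevents the perturbation of $\lambda_n$ away from $a_N n + b_N$ from destroying the exponential suppression of $e^{-\lambda_n t_c}P_n(N)$.
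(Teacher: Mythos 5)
Your proof is correct and follows essentially the same route as the paper: decompose $m_{t_c}^{(N)}$ via Lemma \ref{lem:absolutepartnegativec} into an atom plus an absolutely continuous part, show $\alpha(t_c)\to 1-e^{2c}$ and $\widetilde{N}(t_c)\to e^c+e^{-c}$, and use Parseval together with the uniform summable bounds from Lemma \ref{lem:absolutepartnegativec} to get $L^2(\nu_{\mathrm{SC}})$-convergence (hence $L^1$) of the densities. The only addition is that you compute the limiting density $g$ explicitly via the Chebyshev generating series, whereas the paper identifies the absolutely continuous part of the limit abstractly from the moment convergence supplied by Lemma \ref{lem:chebyshevmoments}.
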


\begin{proof}
To compute the limit of the total variation distance, let us first notice that if $\widetilde{m}_{t_{c}}^{(N)} = \mathbf{1}_{[-2, 2]}m_{t_{c}}^{(N)}$, then
\begin{align*}
\left\|m_{t_{c}}^{(N)} - \nu_{\text{SC}}\right\|_{TV} & = \left\|\mathbf{1}_{[-2, 2]}\left(m_{t_{c}}^{(N)} - \nu_{\text{SC}}\right)\right\|_{TV} + \left\|\mathbf{1}_{\R\setminus[-2, 2]}\left(m_{t_{c}}^{(N)} - \nu_{\text{SC}}\right)\right\|_{TV} \\
& = \left\|\widetilde{m}_{t_{c}}^{(N)} - \nu_{\text{SC}}\right\|_{TV} + \alpha(t_{c}).
\end{align*}
We will deal with each part separately :
\begin{itemize}
\item It is straightforward to see that as $N$ goes to infinity, $\alpha(t_{c})\to 1-e^{2c}$ and that $\widetilde{N}(t_{c})\to e^{c} + e^{-c}$, which are exactly the parameters of the atom of the free Poisson distribution in the statement.
\item Observe moreover that $m_{t_{c}}^{(N)}$ converges in moments to a measure for which the integral of $P_{n}$ equals $e^{cn}$. By Lemma \ref{lem:chebyshevmoments}, this must be the free Poisson distribution in the statement.
Therefore, if we can prove that $\widetilde{m}_{t_{c}}^{(N)}$ converges in total variation distance, then its limit must be the absolutely continuous part of $\mathrm{Poiss}^{+}\pg e^{2c}, -e^{-c}\pd \ast\delta_{e^{c} + e^{-c}}$.
The density of $\widetilde{m}_{t_{c}}^{(N)}$ with respect to $\nu_{\text{SC}}$ is
\begin{equation*}
\sum_{n=0}^{+\infty}\pg e^{-\lambda_n t_{c}}P_{n}(N) - P_{n}(\widetilde{N}(t))\pd P_{n}
\end{equation*}
and each term of the sum converges as $N$ goes to infinity. Therefore, if we can bound these terms in $L^{2}$-norm by a summable sequence not depending on $N$, then we can conclude. But this was already done in Lemma \ref{lem:absolutepartnegativec}, since it follows from it that such a bound exists at least for $N$ large enough depending on $c$.
\end{itemize}
\end{proof}

\begin{rem}
Note that we do not prove that $m_{t_{c}}^{(N)}$ converges in total variation to $\mathrm{Poiss}^{+}\pg e^{2c}, -e^{-c}\pd\ast\delta_{e^{c} + e^{-c}}$. This is actually false since their atoms are not at the same place. If we change slightly the time by setting
\begin{equation*}
\widehat{t}_{c} = \frac{-\ln(q(N))+ c}{a_{N}} = t_{abscont}(N) + \frac{c}{a_{N}},
\end{equation*}
then $\Tilde{N}(\widehat{t}_{c}) = e^{c} + e^{-c}$ and $m_{\widehat{t}_{c}}^{(N)}$ converges in total variation distance to $\mathrm{Poiss}^{+}\pg e^{2c}, -e^{-c}\pd\ast\delta_{e^{c} + e^{-c}}$. However, this would be somewhat unnatural, and we prefer to state only results which are true for all times $t_c + o(N)$ than for a very specific sequence of times.
\end{rem}

\begin{rem}
Once again, the integral giving the total variation distance can be computed explicitly in terms of $c$. The computation differs however depending on whether $c$ is greater or smaller than $-\ln(2)$ (because of the absolute value in the integral). For $-\ln(2) < c < 0$, the result is the same as the first expression given in Remark \ref{rem:explicitprofile} for $c > 0$, except for an extra $(1-e^{2c})/2$. However, this time $1/2\leqslant e^{-c}/2\leqslant 1$ and for $x\in [1/2, 1]$, $\pi - 3\arcsin(x) = \arcsin(3x - 4x^{3})$ so that we end up with the same result, namely
\begin{equation*}
f(c) = (1-e^{2c})\frac{2}{\pi}\arcsin\left(\frac{e^{-c}}{2}\right) + \frac{e^{-2c}+2}{4 \pi e^{-c}}\sqrt{4-e^{-2c}}
\end{equation*}
while for $c\leqslant -\ln(2)$,
\begin{equation*}
f(c) = 1 - e^{2c}.
\end{equation*}
\end{rem}

The combination of Propositions \ref{prop:rightprofileorthogonal} and \ref{thm:negativeprofile} yields the complete proof of Theorem \ref{thm:completeprofileorthogonal}, therefore yielding the cutoff profile for Brownian motion on $O_{N}^{+}$. As an illustration, here is a plot of the profile :
\begin{center}
\includegraphics[scale=0.25]{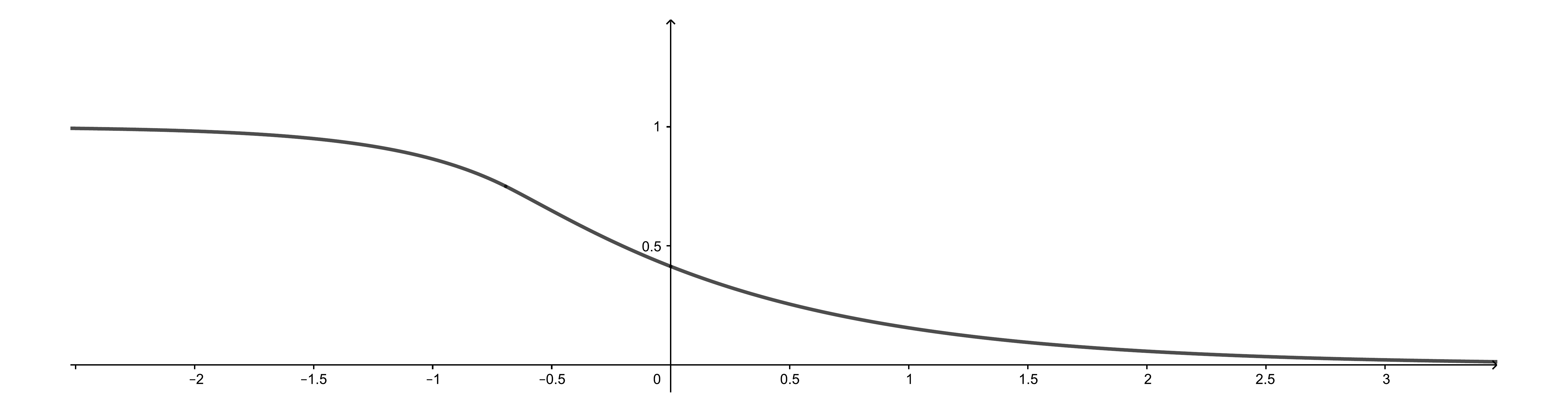}
\end{center}

\begin{rem}\label{rem:profileimpliescutoff}
Theorem \ref{thm:completeprofileorthogonal} is not just a refinement of the cutoff phenomenon established in Theorem \ref{thm:orthogonalcutoff}, but a stronger result. In other words, the existence of a cutoff profile implies the cutoff phenomenon, as was already mentioned in the beginning of the proof of Theorem \ref{thm:orthogonalcutoff}. This follows from the fact that the profile converges to $1$ at $-\infty$ and $0$ at $+\infty$, together with the monotonicity of Proposition \ref{lem:decreasing}.
\end{rem}

\subsection{Further results}

Let us complete this section with some additional remarks and results concerning various generalizations of the original problem.

\subsubsection{Other norms}

We have worked so far with the Fourier-Stieltjes norm, because it is the only natural norm available which makes sense for all $t\in \R_{+}$. However, the upper bound was computed using the total variation distance, and one may wonder whether the cutoff upper bound also holds with respect to other distances. It turns out that the answer is yes.

\begin{cor}
Brownian motion on $O_{N}^{+}$ satisfies, for all $1\leqslant p\leqslant \infty$ and $c > 0$, with $t_{c} = N\ln(N) + cN$,
\begin{equation*}
\lim_{N\to +\infty}\|\psi_{t_c} - h\|_{L^{p}} = \left\|\FMeix\pg-e^{-c}, 0\pd\ast\delta_{e^{-c}} - \FMeix(0, 0)\right\|_{L^{p}}.
\end{equation*}
\end{cor}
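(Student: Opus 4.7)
The plan is to exploit the absolute continuity of $\psi_{t_c}$ available for $c>0$ and $N$ large (Proposition \ref{prop:absolutecontinuity}) in order to reduce the noncommutative $L^p(O_N^+,h)$-norm to a classical $L^p([-2,2],\nu_{\mathrm{SC}})$-norm. Indeed, the density
\begin{equation*}
\rho_{t_c}-1=\sum_{n\geqslant 1} d_n e^{-t_c\lambda_n}\chi_n
\end{equation*}
lies in $\O(O_N^+)_{\mathrm{central}}\cap L^\infty(O_N^+)$ (its $L^\infty$-membership being checked below), and via the isomorphism $\iota:\chi_n\mapsto P_n$ sending $h$ to $\nu_{\mathrm{SC}}$, the noncommutative $L^p$-norm of this central element coincides with the classical $L^p([-2,2],\nu_{\mathrm{SC}})$-norm of $\sum_{n\geqslant 1}d_n e^{-t_c\lambda_n}P_n$. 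By the computation in the proof of Proposition \ref{prop:rightprofileorthogonal}, the candidate limit is $F_c-1=\sum_{n\geqslant 1}e^{-cn}P_n$, which is precisely the density on $[-2,2]$ of $\FMeix(-e^{-c},0)\ast\delta_{e^{-c}}$ with respect to $\FMeix(0,0)=\nu_{\mathrm{SC}}$.

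The key new step is to establish $L^\infty$-convergence. Recall from the identity $P_n(2\cos\theta)=\sin((n+1)\theta)/\sin\theta$ that $\|P_n\|_\infty=n+1$ on $[-2,2]$. Combining this with the uniform bound $d_n e^{-t_c\lambda_n}\leqslant \tfrac{3}{2}e^{-cn}$ already obtained for $N\geqslant 3$ in the proof of Proposition \ref{prop:rightprofileorthogonal}, one gets for every $N$ large enough a dominating sequence $\tfrac{5}{2}(n+1)e^{-cn}$ controlling each term of $(d_n e^{-t_c\lambda_n}-e^{-cn})P_n$ in $L^\infty$, and this sequence is summable since $c>0$ and independent of $N$. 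Since $d_n e^{-t_c\lambda_n}\to e^{-cn}$ for each fixed $n$ by Lemma \ref{lem:affineestimate} and Proposition \ref{prop:dlpn}, dominated convergence yields
\begin{equation*}
\sup_{x\in[-2,2]}\Bigl|\sum_{n\geqslant 1}\bigl(d_n e^{-t_c\lambda_n}-e^{-cn}\bigr)P_n(x)\Bigr|\xrightarrow[N\to\infty]{}0,
\end{equation*}
which in particular ensures that $\rho_{t_c}-1$ is indeed in $L^\infty$ for $N$ large, and delivers the statement for $p=\infty$.

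The intermediate cases $1\leqslant p<\infty$ are then obtained by log-convexity of $L^p$-norms: since $\nu_{\mathrm{SC}}$ is a probability measure, $\|f\|_p\leqslant\|f\|_1^{1/p}\|f\|_\infty^{1-1/p}$, applied to $f_N=\iota(\rho_{t_c})-F_c$. The $L^1$-convergence was established in Proposition \ref{prop:rightprofileorthogonal} (it is the cutoff-profile statement in total variation), and combined with the $L^\infty$-convergence just proven one gets convergence for every $p\in[1,\infty]$, with limit equal to $\|F_c-1\|_{L^p([-2,2],\nu_{\mathrm{SC}})}=\|\FMeix(-e^{-c},0)\ast\delta_{e^{-c}}-\FMeix(0,0)\|_{L^p}$. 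The only mildly technical point is the sharp uniform bound on $d_n e^{-t_c\lambda_n}$, but this is already in hand from the earlier proof, so the argument is essentially a dominated-convergence plus interpolation exercise.
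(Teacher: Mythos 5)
Your proof is correct and its main step — establishing $L^\infty$-convergence of the density via $\|P_n\|_\infty = n+1$ and the $N$-independent summable bound $d_n e^{-t_c\lambda_n}\leqslant \tfrac{3}{2}e^{-cn}$, then applying dominated convergence — coincides exactly with the paper's. The only genuine difference is how you handle $1\leqslant p<\infty$: you interpolate the difference $f_N=\iota(\rho_{t_c})-F_c$ between the $L^1$-endpoint (already proved as the cutoff profile in Proposition~\ref{prop:rightprofileorthogonal}) and the $L^\infty$-endpoint just established, using $\|f_N\|_p\leqslant\|f_N\|_1^{1/p}\|f_N\|_\infty^{1-1/p}$. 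The paper instead observes, via the noncommutative H\"older inequality, that $\|d_ne^{-t_c\lambda_n}\chi_n\|_p\leqslant\|d_ne^{-t_c\lambda_n}\chi_n\|_\infty$, so the same summable $N$-independent dominant works for every $p$ at once and one reruns the series-level dominated convergence argument in $L^p$. Both routes are valid and of comparable length; yours has the mild advantage of packaging all intermediate $p$ into a one-line interpolation inequality without revisiting the series, at the cost of explicitly invoking the $L^1$-case as an input, while the paper's argument is more self-contained in that every $p$ (including $p=\infty$) is handled by the identical domination mechanism.
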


\begin{proof}
Recall that the density of the process at time $t$ is, if the series makes sense,
\begin{equation*}
\rho_{t} = \sum_{n=0}^{+\infty}d_{n}e^{-t\lambda_n }\chi_{n}.
\end{equation*}
Using the fact that $\|\chi_{n}\|_{\infty} = P_{n}(2) = n+1$, we see that the density converges in $L^{\infty}$-norm at $t_{c}$ as soon as $c > 0$ since (for $N \geqslant 3$) and
\begin{equation*}
\left\|d_{n}e^{-t_{c}\lambda_n}\chi_{n}\right\|_{\infty}\leqslant (n+1)d_{n}e^{-t_{c}\lambda_n } \leqslant \frac{3}{2}(n+1)e^{-nc}.
\end{equation*}
Moreover, using this bound the same strategy as for Proposition \ref{prop:rightprofileorthogonal} yields the cutoff profile in the case $p = \infty$. As for finite $p$, it follows from the noncommutative H\"{o}lder inequality (see for instance \cite[Thm 2.13.iv]{takesaki2003theoryII}) that for any $1\leqslant p\leqslant \infty$,
\begin{equation*}
\|d_{n}e^{-t_{c}\lambda_n}\chi_{n}\|_{p}\leqslant \|d_{n}e^{-t_{c}\lambda_n}\chi_{n}\|_{\infty}
\end{equation*}
hence we can once again resort to the same argument.
\end{proof}

\begin{rem}
Note that for $c<0$, according to the discussions in the previous subsection, we may decompose $\psi_{t_{c}}$ into an absolutely continuous part $\widetilde{\psi}_{t_{c}}$ which admits a $L^p$-density for all $1\leqslant p \leqslant \infty$ and such that $\psi _{t_c} -  \widetilde{\psi}_{t_c}$ is singular with respect to $h$. It is easy to see that the above theorem still holds for $\widetilde{\psi}_{t_{c}}$ even for $c<0$.
\end{rem}

Let us compare this with the classical case. P.-L. Méliot proved in \cite[Thm 7]{meliot2014cut}, building on results of G. Chen and L. Saloff-Coste in \cite{chen2008cutoff}, that the cutoff phenomenon for Brownian motion on $SO(N)$ indeed occurs for all $1\leqslant p\leqslant \infty$ and that the cutoff time is the same as for the $L^{1}$-norm for all $1\leqslant p < \infty$. However, for $p = \infty$, the cutoff time is doubled and becomes $4\ln(N)$. It is therefore quite surprising that in the quantum case, the difference between the case of finite and infinite parameter $p$ disappears. This must nevertheless be tempered with the fact that there is no lower bound in our case, because our definition of the $L^{p}$ norm only makes sense for absolutely continuous states.

\subsubsection{The free real sphere}

In \cite{meliot2014cut}, P.-L. Méliot did not only prove the cutoff phenomenon for compact simple Lie groups, but also for their homogeneous spaces. In the quantum setting, there is no structure theory of compact homogeneous spaces paralleling the classical one, but there are nevertheless some explicit examples. We will now consider the simplest of them, which is an analogue of the real sphere on which the classical orthogonal group acts. The same idea of ``liberation'' as for the definition of free orthogonal quantum groups suggests that the free analogue of the real sphere should be described by the universal $*$-algebra generated by $N$ self-adjoint elements $(x_{i})_{1\leqslant i\leqslant N}$ such that
\begin{equation*}
\sum_{i=1}^{N}x_{i}^{2} = 1.
\end{equation*}
Denoting by $\O(S_{+}^{N-1})$ this object, it is endowed with an action of $O_{N}^{+}$ through the map
\begin{equation*}
\alpha : x_{i}\mapsto \sum_{i=1}^{N}u_{ij}\otimes x_{j}.
\end{equation*}
Note that the abelianization of $\O(S_{+}^{N-1})$ is exactly the algebra of polynomial functions on the $N-1$ dimensional sphere in $\R^{N}$ and that the formula defining $\alpha$ also defines the usual action of $O_{N}$ on that sphere.

Intuitively, Brownian motion on such a space should be a Lévy process invariant under the action $\alpha$, and the analogue of the uniform measure should be the unique probability measure invariant under $\alpha$. Such an $\alpha$-invariant state does indeed exist and can be constructed in the following way.
Consider the subalgebra $\O(X_{N})\subset \O(O_{N}^{+})$ generated by the elements $u_{i1}$ for $1\leqslant i\leqslant N$. Then, there is a surjective $*$-homomorphism $\pi : \O(S_{+}^{N-1})\to \O(X_{N})$ sending $x_{i}$ to $u_{i1}$. Moreover, one has
\begin{equation*}
(\pi\otimes \id)\circ\alpha = (\D\otimes\id)\circ\pi
\end{equation*}
so that the state $\omega = h\circ\pi$ is invariant under the action $\alpha$. As a consequence, we will only consider the ``concrete'' model $\O(X_{N})$ instead of $\O(S_{+}^{N-1})$.

Brownian motion considered in \cite{meliot2014cut} on a homogeneous space is then the projection of Brownian motion coming from the group. In our case, this simply amounts to restricting $\psi_{t}$ to $\O(X_{N})$. Before giving the expression, let us first recall that by \cite[Lem 7.3]{das2018invariant}, one may find a basis for the carrier Hilbert space of each irreducible representation $u^{n}$ of $O_{N}^{+}$ such that
\begin{equation*}
\O(X_{N}) = \Span\{u_{i1}^{n} \mid 1\leqslant i\leqslant N, n\in \N\}.
\end{equation*}

\begin{prop}\label{prop:cutoff_sphere}
The Lévy process given by the restriction of $\psi_{t}$ to $\O(X_{N})$ exhibits a cutoff phenomenon at time $t_{N} = \frac{1}{2} N\ln(N)$. Morevoer, it has the same cutoff profile as Brownian motion on $O_{N}^{+}$.
\end{prop}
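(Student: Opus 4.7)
I plan to adapt the strategy of Theorems \ref{thm:orthogonalcutoff} and \ref{thm:completeprofileorthogonal} to the subalgebra $\O(X_N)$. Since $\O(X_N) = \Span\{u_{i1}^n : 1\leq i\leq d_n,\, n\in \N\}$, each irreducible representation of $O_N^+$ appears in $\O(X_N)$ with multiplicity one, and Schur orthogonality gives $\omega(u_{i1}^n (u_{j1}^m)^*) = \delta_{nm}\delta_{ij}/d_n$. Consequently, when it converges, the $L^2$-density of $\psi_t|_{\O(X_N)}$ with respect to $\omega$ is
\begin{equation*}
\rho_t^{\mathrm{sph}} = \sum_{n\geq 0} d_n e^{-t\lambda_n}\, u_{11}^n,\qquad \big\|\rho_t^{\mathrm{sph}} - 1\big\|_{L^2(\omega)}^{2} = \sum_{n\geq 1} d_n\, e^{-2t\lambda_n}.
\end{equation*}
The presence of $d_n$ instead of $d_n^2$ with respect to the group case is exactly what halves the cutoff time.

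First I would establish the cutoff phenomenon. For the upper bound, the analogue of Lemma \ref{lem:upperbound} combined with Lemma \ref{lem:affineestimate} and $d_n\sim N^n$ yields $d_n e^{-2t\lambda_n}\sim e^{-2cn}$ at $t = \frac{1}{2}N\ln(N)+cN$ with $c>0$, so the sum is summable and tends to $0$ as $c\to +\infty$. For the lower bound, I would use the witness $v = \sqrt{N}\,u_{11}\in \O(X_N)$: the relation $\sum_k u_{k1}^2 = 1$ together with the symmetry of $h$ in the index $k$ gives $\omega(v)=0$ and $\var_\omega(v)=1$, while $\psi_t(v)=\sqrt{N}\,e^{-t\lambda_1}\sim e^{-c}$, after which Chebyshev's inequality on the Borel set $\{|v|\leq e^{-c}/2\}$, exactly as in the proof of Theorem \ref{thm:orthogonalcutoff}, gives the lower bound.

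For the profile, the key observation is that the orthonormal family $Y_n^{(N)} := \sqrt{d_n}\,u_{11}^n$ in $L^2(\O(X_N),\omega)$ plays the role of the orthonormal characters $\chi_n$ on the group. In this basis, $\rho_t^{\mathrm{sph}}-1$ has coefficients $\sqrt{d_n}e^{-t\lambda_n}\sim e^{-cn}$ at $t = \frac{1}{2}N\ln(N)+cN$, matching exactly the coefficients $d_n e^{-t\lambda_n}\sim e^{-cn}$ of the central group density at $t = N\ln(N)+cN$. Moreover, a Clebsch--Gordan computation for the fusion $u^{1}\otimes u^{n} = u^{n+1}\oplus u^{n-1}$, combined with the asymptotics of the dimensions, yields a three-term relation $Y_1^{(N)}\cdot Y_n^{(N)} = a_n^{(N)} Y_{n+1}^{(N)} + b_n^{(N)} Y_{n-1}^{(N)}$ with $a_n^{(N)}, b_n^{(N)}\to 1$ as $N\to\infty$. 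Thus the $Y_n^{(N)}$ asymptotically satisfy the Chebyshev recurrence, $Y_1^{(N)} = \sqrt{N}\,u_{11}$ converges in $\ast$-distribution to the semicircle law, and the TV computation on the sphere collapses onto the one carried out for $O_N^+$, producing the same cutoff profile $f(c)$ of Theorem \ref{thm:completeprofileorthogonal}.

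The main obstacle will be the regime $c<0$, where an analogue of Proposition \ref{prop:absolutecontinuity} shows that $\psi_t|_{\O(X_N)}$ is not absolutely continuous with respect to $\omega$, and the density expansion fails to be in $L^2$. To handle it, I would mimic Lemma \ref{lem:absolutepartnegativec} and Proposition \ref{thm:negativeprofile}: decompose the scalar spectral measure associated to $\psi_t$ restricted to the commutative subalgebra generated by $u_{11}$ into an absolutely continuous part carrying an $L^2$-density plus an atom at an appropriately rescaled analogue of $\widetilde{N}(t)$, and control each piece separately in the limit to recover the same profile as on $O_N^+$.
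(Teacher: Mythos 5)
Your proposal is correct and follows essentially the same approach as the paper: the key observation is that the Plancherel formula on $\O(X_N)$ produces $\sum d_n e^{-2t\lambda_n}$ rather than $\sum d_n^2 e^{-2t\lambda_n}$, because the $u_{i1}^n$ are orthogonal but not orthonormal, which halves the cutoff time, after which the lower bound and the two-sided profile computation are transported from the $O_N^+$ case. The one point worth noting is that the paper dispatches the profile with a single sentence (``the proof is now exactly the same''), whereas you correctly flag that the spectral measure of $u_{11}$ and its orthogonal polynomials depend on $N$ (unlike the $N$-independent Chebyshev polynomials $P_n$ used on the group side), and you supply the asymptotic three-term recurrence for $Y_n^{(N)}=\sqrt{d_n}\,u_{11}^n$ coming from the fusion rule $u^1\otimes u^n = u^{n+1}\oplus u^{n-1}$ as the mechanism making that transfer rigorous; this is precisely the point the paper's terse proof is implicitly relying on, and your treatment is, if anything, more careful.
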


\begin{proof}
For $t$ large enough, the density of $\psi_{t} - h$ is
\begin{equation*}
\sum_{n=1}^{+\infty}d_{n}e^{-tP_{n}'(N)/P_{n}(N)}u_{11}^{n}
\end{equation*}
whose $L^{2}$-norm squared is
\begin{equation*}
\sum_{n=1}^{+\infty}d_{n}e^{-2tP_{n}'(N)/P_{n}(N)}.
\end{equation*}
The difference with the previous case is that the dimension $d_{n}$ is not squared, due to the fact that coefficients of irreducible representations form an orthogonal but not orthonormal basis. This accounts for the factor $1/2$ in the cutoff time, exactly as in \cite{meliot2014cut}. The proof is now exactly the same as for Theorems \ref{thm:orthogonalcutoff} and \ref{thm:completeprofileorthogonal}.
\end{proof}

There is also another candidate for a Brownian motion on the real free sphere. Lévy processes on the later quantum space were classified by B. Das, U. Franz and X. Wang in \cite[Thm 7.5]{das2018invariant} using a formula similar to Equation \eqref{eq:quantumhunt}, i.e. involving a positive constant $b$ and a Lévy measure $\nu$. Taking as before $b = 1$ and $\nu = 0$ yields a reasonable notion of a Brownian motion on $X_{N}$ which \emph{is not} the projection of the one on $O_{N}^{+}$. The convolution semigroup of states $(\varphi_{t})_{t\in \R_{+}}$ we are interested in is then given by :
\begin{equation*}
\varphi_{t} : u_{i1}^{n} \mapsto \delta_{i1}e^{-tR_{n}'(1)},
\end{equation*}
where the polynomials $(R_{n})_{n\in \N}$ are the orthogonal polynomials associated to the spectral measure of $u_{11}$ (see \cite{banica2009spectral} for details and explicit computations).

\begin{prop}
The $O_{N}^{+}$-invariant Lévy process on $X_{N}$ given by $(\varphi_{t})_{t\in \R_{+}}$ exhibits a cutoff phenomenon at time $t_{N} = \frac{1}{2}\ln(N)$.
\end{prop}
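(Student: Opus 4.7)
The plan is to follow the template established for Theorem \ref{thm:orthogonalcutoff} and Proposition \ref{prop:cutoff_sphere}, proving the upper and lower bounds of the cutoff separately. The factor of $N$ difference with Proposition \ref{prop:cutoff_sphere} (cutoff at $\frac{1}{2}\ln(N)$ versus $\frac{1}{2}N\ln(N)$) must be explained by a linear-in-$n$ growth of the eigenvalues $R_n'(1)$, with no $1/N$ normalization of the sort that appeared in the eigenvalues $\lambda_n = P_n'(N)/P_n(N)$ of Brownian motion on $O_N^+$. The required spectral information on the $R_n$ should be extracted from \cite{banica2009spectral}.

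For the upper bound, the first step is to use the orthogonality relations of the basis $\{u_{i1}^n\}$ of $\O(X_N)$ from \cite{das2018invariant} to expand the $L^2$-density of $\varphi_t - \omega$ as
\[
\rho_t = \sum_{n=1}^{+\infty} d_n e^{-t R_n'(1)}\, u_{11}^n,
\]
with only the $i = 1$ basis elements contributing since $\varphi_t(u_{i1}^n) = \delta_{i1} e^{-t R_n'(1)}$. Applying the Plancherel formula (with $d_n$, not $d_n^2$, exactly as in Proposition \ref{prop:cutoff_sphere}) gives
\[
\|\varphi_t - \omega\|^2 \leqslant \frac{1}{4}\sum_{n=1}^{+\infty} d_n e^{-2t R_n'(1)}.
\]
Using $d_n = P_n(N) \leqslant C N^n$ and the expected estimate $R_n'(1) \geqslant n$, at $t = (1+\epsilon)\frac{1}{2}\ln(N)$ the general term is dominated by $N^n \cdot N^{-n(1+\epsilon)} = N^{-n\epsilon}$, and the sum tends to zero.

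For the lower bound, the natural witness is $u_{11}$ itself: under $\omega$ one has $\omega(u_{11}) = 0$ and $\omega(u_{11}^2) = 1/N$ by Schur orthogonality, while under $\varphi_t$ the mean is $\varphi_t(u_{11}) = e^{-t R_1'(1)}$. For the cutoff time to equal exactly $\frac{1}{2}\ln(N)$ one needs $R_1'(1) = 1$ (after the natural normalization), in which case at $t = (1-\epsilon)\frac{1}{2}\ln(N)$ the mean $N^{-(1-\epsilon)/2}$ under $\varphi_t$ dominates the standard deviation $N^{-1/2}$ under $\omega$ by a factor $N^{\epsilon/2}$. Applying Chebyshev's inequality to the Borel subset $B = \{|u_{11}| \leqslant \tfrac{1}{2} e^{-t R_1'(1)}\}$ of the spectrum of $u_{11}$, exactly as in the proof of Theorem \ref{thm:orthogonalcutoff}, one obtains a lower bound tending to $1$ as $N \to \infty$.

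The main technical obstacle is establishing the polynomial estimates $R_n'(1) \geqslant n$ uniformly in $N$. Unlike the Chebyshev polynomials $P_n$, which admit an explicit product formula used repeatedly above, the $R_n$ are defined abstractly as orthogonal polynomials for the spectral distribution of $u_{11}$ on the free sphere, so the required bound has to be read off from the concrete description of that measure in \cite{banica2009spectral}. A companion ingredient is the explicit computation of $\varphi_t(u_{11}^2)$ needed to control the variance in the Chebyshev step, which reduces to decomposing $u_{11}^2$ in the basis $\{u_{i1}^n\}$ and consequently involves the values $R_0'(1) = 0$ and $R_2'(1)$.
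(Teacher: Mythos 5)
Your outline follows the paper's proof quite closely: the upper bound via the Plancherel identity (with the factor $d_n$ rather than $d_n^2$, exactly as in Proposition~\ref{prop:cutoff_sphere}), the summation of a geometric series after invoking a lower bound of the form $R_n'(1)\geqslant n$, and a Chebyshev-style lower bound with $u_{11}$ as witness. The key estimate you flag as the ``main technical obstacle'' is in fact exactly what the paper relies on, but it is not obtained by analysing the spectral measure from \cite{banica2009spectral} -- it is quoted directly from \cite[Cor 7.14]{das2018invariant}, which gives the two-sided bound
\[
n \leqslant R_n'(1) \leqslant \frac{N-1}{N-2}\,n.
\]
With that citation in hand, the gap you acknowledge disappears: the upper bound follows immediately (the paper writes out the geometric series as $\frac{1}{4(1-q^2)}\cdot\frac{1}{qe^{2t}-1}$ using $d_n \leqslant q^{-n}/(1-q^2)$ from \cite[Lem 3.3]{freslon2017cutoff}), and the upper inequality $R_n'(1)\leqslant \frac{N-1}{N-2}n$ gives the control of $\varphi_t(u_{11})$ and $\varphi_t(u_{11}^2)$ needed for your Chebyshev step. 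Aside from this sourcing issue and the fact that the paper barely sketches the lower bound (saying only that it is proven as in Proposition~\ref{prop:cutoff_sphere}, whereas you carry it out more explicitly with the $u_{11}$-witness), your plan is sound and matches the paper's argument.
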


\begin{proof}
For $t$ large enough, $\varphi_{t}$ has an $L^{2}$-density with respect to $\omega = h\circ \pi$ given by
\begin{equation*}
\sum_{n=0}^{+\infty}d_{n}e^{-tR_{n}'(1)}u_{11}^{n}
\end{equation*}
so that
\begin{equation*}
\left\|\varphi_{t} - \omega\right\|^{2} \leqslant \frac{1}{4}\left\|\rho_{t} - 1\right\|_{1}^{2} \leqslant \frac{1}{4}\left\|\rho_{t} - 1\right\|_{2}^{2} = \frac{1}{4}\sum_{n=1}^{+\infty}d_{n}^{2}e^{-2tR_{n}'(1)}\|u_{11}^{n}\|_{2}^{2} = \frac{1}{4}\sum_{n=1}^{+\infty}d_{n}e^{-2tR_{n}'(1)}.
\end{equation*}
Now, we know from \cite[Cor 7.14]{das2018invariant} that
\begin{equation*}
n\leqslant R_{n}'(1)\leqslant \frac{N-1}{N-2}n.
\end{equation*}
and combining this with \cite[Lem 3.3]{freslon2017cutoff} shows that $t = \frac{1}{2}(\ln(N) + c)$ is enough to ensure the existence of the $L^{2}$-density and that
\begin{equation*}
\left\|\varphi_{t} - \omega\right\|^{2} \leqslant \frac{1}{4}\frac{1}{(1-q^{2})}\sum_{n=1}^{+\infty}q^{-n}e^{-2tn} = \frac{1}{4}\frac{1}{(1-q^{2})}\frac{1}{qe^{2t}-1} \leqslant \frac{1}{2}\frac{e^{-c}}{1 - e^{-c}}
\end{equation*}
and the upper bound follows.

The lower bound is proven as in Proposition \ref{prop:cutoff_sphere}.
\end{proof}

\begin{rem}
Note that there is an abuse of notations since the polynomials $R_{n}$ also depend on the integer $N$. This is different from the case $O_{N}^{+}$ where for all $N$, the orthogonal polynomials were always the same Chebyshev polynomials $P_{n}$. That fact, combined with the cumbersome available descriptions of $R_{n}$ (see for instance \cite[Sec 7.3]{das2018invariant}), make it difficult to compute the cutoff profile. However, because $\sqrt{N}u_{11}$ becomes semi-circular when $N$ goes to infinity by \cite[Thm 6.1]{banica2007integration}, it is reasonable to conjecture that $\sqrt{N}^{n}R_{n}$ converges to $P_{n}$ and the process has the same cutoff profile.
\end{rem}

\section{Quantum permutations}\label{sec:permutations}

Our second family of examples will be quantum permutations. The quantum permutation groups $S_{N}^{+}$ were introduced by Sh. Wang in \cite{wang1998quantum}. The corresponding $*$-algebra $\O(S_{N}^{+})$ is the quotient of $\O(O_{N}^{+})$ by the relations $u_{ij}^{2} = u_{ij}$. The coproduct factors through this and yields the compact quantum group structure. The connection to classical permutation may seem loose from that definition, but one easily shows that if $c_{ij} : S_{N}\to \C$ is the function sending a permutation $\sigma$ to $\pg \delta_{\sigma(i)j} \pd_{ij}$ , then there is a surjective $*$-homomorphism $\O(S_{N}^{+})\to \O(S_{N})$ sending $u_{ij}$ to $c_{ij}$, and that $\O(S_{N})$ is in fact the abelianization of $\O(S_{N}^{+})$. Thus, $S_{N}^{+}$ is a quantum version of $S_{N}$ somehow like $O_{N}^{+}$ is the quantum version of $O_{N}$. Beyond this fact which motivated the original definition, several connections between classical and quantum permutations have emerged which strongly support the idea that $S_{N}^{+}$ is the correct generalization of $S_{N}$. An example of particular interest from the probabilistic point of view is the free De Finetti theorem of C. Köstler and R. Speicher \cite{kostler2009noncommutative}.

The representation theory of $S_{N}^{+}$ is close to that of $O_{N}^{+}$, with the difference that when multiplying two characters (which are still indexed by the integers with $\chi_{0} = 1$ and $\chi_{0} + \chi_{1} = \summ{i=1}{N} u_{ii}$), one gets the formula
\begin{equation*}
\chi_{1}\chi_{n} = \chi_{n+1} + \chi_{n} + \chi_{n-1}.
\end{equation*}
The corresponding orthogonal polynomials are then given by the restriction to $[0, 4]$ of $Q_{n}(t) = P_{2n}(\sqrt{t})$, yielding the free Poisson law $\FPoiss(1,1)$ as spectral measure of $\chi_{1}$ under the Haar state. The associated dimensions of irreducible representations are $d_n = Q_n(N)$. 
We are now going to study two examples of processes on $S_{N}^{+}$, one continuous and one discrete.

\subsection{Brownian motion}
The natural candidate for Brownian motion on $S_{N}^{+}$ can be constructed exactly as in the case of $O_{N}^{+}$. Indeed, U. Franz, A. Kula and A. Skalski proved in \cite[Thm 10.10]{franz2016levy} a decomposition result for central Lévy processes on $S_{N}^{+}$ involving as before a positive constant $b$ and a Lévy measure $\nu$. Setting $b = 1$ and $\nu = 0$ leads to a central Lévy process. We will again denote by $(\lambda_n )_{n\in\N}$ the sequence determining the process, which is in this case given by
\begin{equation*}
\lambda_n  = \frac{Q_{n}'(N)}{Q_{n}(N)}.
\end{equation*}
The previous arguments carry on almost verbatim to yield the cutoff phenomenon and one can once again describe the cutoff profile as a distance between two free Meixner laws.

\begin{thm}\label{thm:cutofflevypermutation}
The central Lévy process defined above exhibits a cutoff phenomenon at time $N\ln(N)$. Moreover, setting again $t_c = N\ln(N) + cN$, for every $c\in\R$, we have
\begin{equation*}
\lim_{N\to +\infty}\left\|\psi_{t_c} - h\right\| = \left\|D_{\sqrt{1+e^{-c}}}\left(\FMeix\pg\frac{1-e^{-c}}{\sqrt{1+e^{-c}}}, \frac{-e^{-c}}{1+e^{-c}}\pd\right)\ast\delta_{e^{-c}} - \FMeix(0, 1)\right\| .
\end{equation*}
\end{thm}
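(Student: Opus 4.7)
The plan is to follow almost verbatim the argument developed for Brownian motion on $O_N^+$, exploiting the substitution $Q_n(t) = P_{2n}(\sqrt{t})$ to transfer every estimate from the orthogonal case. Differentiating this relation yields
\begin{equation*}
\lambda_n = \frac{Q_n'(N)}{Q_n(N)} = \frac{1}{2\sqrt{N}}\cdot\frac{P_{2n}'(\sqrt{N})}{P_{2n}(\sqrt{N})},
\end{equation*}
so Lemma \ref{lem:affineestimate} applied at the point $\sqrt{N}$ with index $2n$ gives an affine-type asymptotic of the form $\lambda_n = n/N + O(1/N^2)$ with a controlled exponentially small tail in $n$. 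Likewise Proposition \ref{prop:dlpn} yields $d_n = P_{2n}(\sqrt{N}) \sim N^n$ as $N\to\infty$, so that $d_n e^{-t_c\lambda_n}\to e^{-cn}$ for each fixed $n$, which pins down the cutoff time as $N\ln(N)$ and already predicts the form of the limit density.

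The cutoff phenomenon itself is proved exactly as in Theorem \ref{thm:orthogonalcutoff}. For the upper bound I would plug these asymptotics into the analog of Lemma \ref{lem:upperbound}, bounding $\sum d_n^2 e^{-2t\lambda_n}$ by a geometric series that vanishes at time $(1+\epsilon)N\ln(N)$. The lower bound uses $\chi_1$ as a test observable: thanks to the multiplication rule $\chi_1^2 = \chi_2 + \chi_1 + 1$, its variance under $\psi_{t_c}$ stays uniformly bounded, while its mean is $d_1 e^{-t_c\lambda_1}\to e^{-c}$. Chebyshev's inequality applied to the Borel set $\{|\chi_1|\leqslant e^{-c}/2\}$ in the commutative model of the central subalgebra then reproduces the $1 - O(e^{2c})$ type lower bound.

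For the profile at $c > 0$, an $S_N^+$-analog of Proposition \ref{prop:absolutecontinuity} provides absolute continuity of $\psi_{t_c}$ with $L^1$-density $\rho_{t_c} = \sum d_n e^{-t_c\lambda_n}\chi_n$; dominated convergence using $d_n e^{-t_c\lambda_n}\leqslant Ce^{-cn}$ reduces the problem to computing
\begin{equation*}
\tfrac{1}{2}\Bigl\|\sum_{n\geqslant 1} e^{-cn} Q_n\Bigr\|_{L^1(\FPoiss(1,1))}.
\end{equation*}
The generating series $\sum_{n\geqslant 0} z^n Q_n(t)$ has a closed form coming from the recurrence $xQ_n = Q_{n+1} + Q_n + Q_{n-1}$, and after an affine change of variables in the integral, matching with the density formula of the free Meixner family produces the parameters appearing in the statement.

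The case $c < 0$ is the main obstacle, and requires the Biane-type ansatz of Lemma \ref{lem:absolutepartnegativec}. I would seek a point $\widetilde{N}(t)$ and a weight $\alpha(t)$ such that
\begin{equation*}
m_{t_c}^{(N)} = \alpha(t_c)\delta_{\widetilde{N}(t_c)} + \sum_{n\geqslant 0}\bigl(d_n e^{-t_c\lambda_n} - \alpha(t_c)Q_n(\widetilde{N}(t_c))\bigr) Q_n\,\mathrm{d}\FPoiss(1,1),
\end{equation*}
with the coefficients of the absolutely continuous part decaying geometrically in $n$. Pulling back through $Q_n(t) = P_{2n}(\sqrt{t})$ suggests the candidate $\widetilde{N}(t) = \bigl(e^{-\tilde{a}t}q(\sqrt{N})^{-1} + e^{\tilde{a}t}q(\sqrt{N})\bigr)^2$, with $\tilde{a}$ calibrated from $a_{\sqrt{N}}$ and the natural normalization $\alpha(t)$. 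The technical core is then to replay the decay estimates of Lemma \ref{lem:absolutepartnegativec}, obtaining a bound of the form $(2e^{-c}/\sqrt{N})^{2n}$ that controls the $L^2$-norm of the residual. Finally, an analog of Lemma \ref{lem:chebyshevmoments} (using that the polynomials $Q_n$ are orthogonal with respect to $\FPoiss(1,1)$) characterises the limit measure uniquely by its $Q_n$-moments; splitting the total variation distance into the contribution of the atom, whose weight and location converge explicitly, and that of the absolutely continuous part then concludes the proof.
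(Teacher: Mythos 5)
Your proposal follows essentially the same route as the paper: reduce to $L^1$-convergence of densities for $c>0$ via absolute continuity and a generating-series identity, then use the Biane-type atom-plus-absolutely-continuous decomposition for $c<0$, with a moment characterization of the limit to identify it as a free Meixner law. One small slip: the three-term relation you quote for the polynomials $Q_n$ in the variable $t$ is off; since $Q_n(t)=P_{2n}(\sqrt{t})$ one has $tQ_n=Q_{n+1}+2Q_n+Q_{n-1}$ (equivalently $Q_1Q_n=Q_{n+1}+Q_n+Q_{n-1}$ with $Q_1(t)=t-1$), so your stated recurrence $xQ_n=Q_{n+1}+Q_n+Q_{n-1}$ would produce the wrong generating series. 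The paper sidesteps the recurrence entirely by writing $\sum_n e^{-2cn}Q_n(t)=\frac{1}{2}\big(F_c(\sqrt{t})+F_c(-\sqrt{t})\big)$ via the parity of $P_n$, which is a slightly cleaner way to reach the closed form and then perform the affine change of variables; your method would work equally well once the coefficient of $Q_n$ in the recurrence is corrected.
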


\begin{proof}
As mentionned in Remark \ref{rem:profileimpliescutoff}, the existence of the cutoff profile implies the existence of the cutoff phenomenon, hence we will focus on the former. The proof proceeds as for Theorem \ref{thm:completeprofileorthogonal} and involves similar estimates. We will therefore focus on the features which differ.

Assuming first $c > 0$ and setting
\begin{equation*}
G_{c}(t) = \sum_{n=0}^{+\infty}e^{-cn}Q_{n}(t),
\end{equation*}
the cutoff profile equals
\begin{equation*}
\|G_{2c}(t) - 1\|_{1},
\end{equation*}
where the $L^1$ norm is computed with respect to the spectral measure of $\chi_{1}$ with respect to the Haar state, which is $\FPoiss(1,1)$. Note that because $P_{2n}$ is an even function and $P_{2n+1}$ an odd one,
\begin{equation*}
G_{2c}(t) = \sum_{n=0}^{+\infty}e^{-2cn}P_{2n}(\sqrt{t}) = \frac{F_{c}(\sqrt{t}) + F_{c}(-\sqrt{t})}{2}.
\end{equation*}
Setting $\beta = e^{-c}$ and $\gamma = \beta/(1+\beta^{2})$ as in Subsection \ref{subsec:limit_profile}, this leads to the formula
\begin{equation*}
G_{2c}(t) = \frac{1}{2(1+\beta^{2})}\left(\frac{1}{1-\gamma\sqrt{t}}+\frac{1}{1+\gamma\sqrt{t}}\right) = \frac{1}{1+\beta^{2}}\frac{1}{1-\gamma^{2}t}.
\end{equation*}
Let us also set
\begin{equation*}
\eta = \frac{1-\sqrt{1-4\gamma^{2}}}{2\gamma^{2}} = 1 + \beta^{2}.
\end{equation*}
Then, making the changes of variables $u = t-\eta$ and $v = u/\sqrt{\eta}$, and observing that $\gamma^{2} =(\eta - 1)\eta^{-2}$, the density of $G_{2c}(t)\mathrm{d}\FPoiss(1,1)(t)$ becomes
\begin{align*}
& \; \frac{1}{\eta}\frac{1}{1-\gamma^2t}\frac{1}{2\pi t}\sqrt{4-(t-2)^2} \mathbf{1}_{[0, 4]}(t)\mathrm{d}t \\
= & \; \frac{1}{2\pi\eta}\frac{1}{(1-\gamma^{2}(u+\eta))(u+\eta)}\sqrt{4 - (u - (2-\eta))^{2}}\mathbf{1}_{[-\eta, 4-\eta]}(u)\mathrm{d}u \\
= & \; \frac{1}{2\pi\eta}\frac{1}{(1-(\eta-1)\eta^{-2}(v\sqrt{\eta}+\eta))(v\sqrt{\eta} + \eta)}\sqrt{4 - (v\sqrt{\eta} - (2-\eta))^{2}}\mathbf{1}_{[-\sqrt{\eta}, \frac{4}{\sqrt{\eta}}-\eta]}(v)\sqrt{\eta}\mathrm{d}v \\
= &\;  \frac{1}{2\pi}\frac{1}{1+v(2-\eta)/\sqrt{\eta} + v^2(1-\eta)/\eta}\sqrt{4/\eta - (v - (2-\eta)/\sqrt{\eta})^{2}}\mathbf{1}_{[-\sqrt{\eta}, \frac{4}{\sqrt{\eta}}-\eta]}(v)\mathrm{d}v.
\end{align*}
 Setting $a = (2-\eta)/\sqrt{\eta}$, and $b = (1-\eta)/\eta$, this is exactly the density of the standardised free Meixner law with parameters $a$ and $b$,
 \begin{equation*}
 \frac{1}{2\pi}\frac{\sqrt{4(1+b) - (v-a)^{2}}}{1+av+bv^{2}}\mathbf{1}_{a-2\sqrt{1+b}, a+2\sqrt{1+b}}\mathrm{d}v.
\end{equation*}
Thus, $G_{2c}(t)\mathrm{d}\FPoiss(1, 1)(t)$ is the density of the law 
\begin{equation*}
D_{\sqrt{\eta}}\left(\FMeix\pg\frac{2-\eta}{\sqrt{\eta}}, \frac{1-\eta}{\eta}\pd\right)\ast\delta_{\eta}.
\end{equation*}
Writing $\FPoiss(1,1) = \FMeix(0, 1)\ast\delta_{1}$, applying $\ast\:\delta_{-1}$ on both sides and replacing $2c$ by $c$ now yields the desired result.

Assume now that $c < 0$. Let us first mention that the free Meixner distribution in the statement then has an atom given by the following formula :
\begin{equation*}
(1-e^{c})\delta_{e^{c}\sqrt{1+e^{-c}}}.
\end{equation*}
Applying the dilation by a factor $\sqrt{1+e^{-c}}$ and the translation by $e^{-c}$ changes the indices of the Dirac mass into $e^{c} + 1 + e^{-c}$ and translating again by $1$ to turn $\FMeix(0, 1)$ into $\FPoiss(1, 1)$, we see that the atom of the measures $m_{t_{c}}^{(N)}$ has to converge to $e^{c} + 2 + e^{-c} = (e^{c/2} + e^{-c/2})^{2}$.
The same argument as in Proposition \ref{thm:negativeprofile} yields an explicit formula for the measure of the corresponding classical process at time $t_{c}$ and the proof is done similarly. Note that by \cite{bozejko2006class}, the cumulants of $\FMeix(a, b)$ are polynomials in $a$ and $b$, hence in our case Laurent polynomials in $e^{-c}$, so that the analogue of Lemma \ref{lem:chebyshevmoments} holds.
\end{proof}

We can give an interpretation of this result similar to the one for Theorem \ref{thm:orthogonalcutoff}. Indeed, the function giving the number of fixed points of a permutation is, in terms of the generators of $\O(S_{N})$, $F = \sum c_{ii}$. Therefore, the elements $\chi_{1} = \sum u_{ii}$ is the quantum version of the number of fixed points. In particular, its law with respect to the Haar state, which is $\FPoiss(1,1)$, can be considered as the ``fixed points law for quantum permutations''. As a consequence, the difference between Brownian motion and the uniform measure on $S_{N}^{+}$ is asymptotically due to the fact that Brownian motion has ``too many fixed points''.

\subsection{Quantum random transpositions}

We will conclude with a discrete example, namely the quantum random transposition walk on the quantum permutation group. The reason for this is that the second-named author recently computed the cutoff profile for the classical version of that walk, while nothing is known in the quantum case.

Recall that if $\mu_{\mathrm{tr}}$ is the uniform measure on the set of transpositions, then the classical random transposition walk has increment distribution
\begin{equation*}
\mu = \frac{N-1}{N}\mu_{\mathrm{tr}} + \frac{1}{N}\delta_{e}.
\end{equation*}
One of the first results in the theory of cutoff phenomenon was the proof by P. Diaconis and M. Shahshahani in \cite{diaconis1981generating} that the random transposition walk exhibits a cutoff phenomenon at $\frac{1}{2}N\ln(N)$ steps. The second named author proved in \cite{teyssier2019limit} that the cutoff profile has the following form : for any $c\in \R$,
\begin{equation*}
\dvt\pg\mu^{\ast \frac{1}{2}(N\ln(N) + cN)},h\pd \xrightarrow[N\to\infty]{} \dvt\pg\mathrm{Poiss}\pg 1+e^{-c}\pd, \mathrm{Poiss}(1)\pd.
\end{equation*}
Note that $\mathrm{Poiss}(1)$ is the asymptotic law of the number of fixed points of a uniformly distributed permutation, which is the same as the law of the trace of a permutation matrix under the Haar measure, i.e. the law of $\chi_{1} + \chi_{0} = \chi_{1} + 1$.

The $\delta_{e}$-part in the definition of $\mu$ appears in a natural way. If we had decided to work with the \textit{pure} random transposition walk, working with the transition law $\mu_{\mathrm{tr}}$ instead of $\mu$, we would have had periodicity issues, as the signature would alternate from $1$ to $-1$. Including an extra $\delta_{e}$-part (sometimes refered to as the ``laziness'' of the walk) in the definition of $\mu$ is the way used by P. Diaconis and M. Shahshahani to rule out this problem. Note that in this case it is very natural to have a coefficient $1/N$ if one thinks of the random walk as a card shuffle : spread a deck of $N$ cards on a table and then choose two cards uniformly at random and swap them if they are different. The probability that the same card has been selected twice is then exactly $1/N$.

On the quantum side, there is a natural analogue of $\mu_{\mathrm{tr}}$ introduced in \cite{freslon2017cutoff} and denoted by $\varphi_{\mathrm{tr}}$. This is a central state given on the characters by
\begin{equation*}
\varphi_{\mathrm{tr}}(\chi_{n}) = Q_{n}(N-2),
\end{equation*}
where $Q_n(N) = P_{2n}(\sqrt{N})$ as in the previous subsection. We may then consider the quantum analogue of $\mu$, the walk given by
\begin{equation*}
\varphi = \frac{N-1}{N}\varphi_{\mathrm{tr}} + \frac{1}{N}\varepsilon.
\end{equation*}
It was proven in \cite{freslon2017cutoff} that the random walk on $S_{N}^{+}$ corresponding to $\varphi_{\mathrm{tr}}$ exhibits a cutoff phenomenon (with the same caveat as in Remark \ref{rem:truecutoff}), and that there is no periodicity issue, unlike for the classical walk. Consequently, in the quantum setting we can also naturally work with the pure random walk without extra laziness. As for the quantum lazy random walk associated with $\varphi$, the study of cutoff phenomenon becomes more delicate and was left as an open problem in \cite{freslon2017cutoff}. In this subsection we will show that the two random walks are asymptotically identical and hence have the same cutoff profile, which we will then compute.

For notational simplicity, we write for a state $\psi$ on $\O (S_N^+ )$
\begin{equation*}
\psi(n) = \frac{1}{d_{n}} \psi(\chi_{n}).    
\end{equation*}
It is well-known and easy to see from definition that $\psi^{*k} (n) = \psi(n)^{k}$ for all $k\in \N$. Lemma \ref{lem:upperbound} still holds if we replace $\psi_t$ by $\psi^{*k}$ and $\psi_t = e^{-t\lambda_n}$ by $\psi(n)^k$.

\subsubsection{The pure walk}

We start by revisiting the work of \cite{freslon2017cutoff} concerning the pure quantum transposition walk. There, the upper bound of cutoff phenomenon was shown to happen at time $N\ln(N)/2$, while the lower bound can be deduced from these computations and the methods used in the proof of Theorem \ref{thm:orthogonalcutoff}.

It is therefore natural to wonder whether the same strategy as in the orthogonal case can also yield the cutoff profile. The answer turns out to be yes, but requires a fine estimate on $\varphi_{\text{tr}}(n)$, in the same spirit as the affine approximation of $\lambda_{n}$ in Lemma \ref{lem:affineestimate}.

\begin{lem}\label{lem:exponentialestimate}
There exist $a_{N}, b_{N} < 0$ depending only on $N$ and a function $c_{N}$ of $n$ such that for all $n\geqslant 0$,
\begin{equation*}
\varphi_{\rm{tr}}(n) = e^{a_{N}n + b_{N} + c_{N}(n)}.
\end{equation*}
Moreover, $0\geqslant c_{N}(n)\geqslant -2q(\sqrt{N-2})^{4n+2}$.
\end{lem}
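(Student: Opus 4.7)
The key observation is that $\varphi_{\mathrm{tr}}(n) = Q_n(N-2)/Q_n(N) = P_{2n}(\sqrt{N-2})/P_{2n}(\sqrt{N})$, and the Chebyshev polynomials admit a closed-form expression in terms of $q(t)$ (recalled just before Proposition \ref{prop:dlpn}) that is perfectly suited to extracting an exponential rate. So my plan is to write $\varphi_{\mathrm{tr}}(n)$ exactly using the $q$-formula and then read off the desired decomposition. Setting $p = q(\sqrt{N})$ and $\widetilde{p} = q(\sqrt{N-2})$ for brevity (both well-defined for $N \geqslant 7$, say), a direct substitution yields
\begin{equation*}
\varphi_{\mathrm{tr}}(n)
= \frac{\widetilde{p}^{-(2n+1)} - \widetilde{p}^{2n+1}}{p^{-(2n+1)} - p^{2n+1}} \cdot \frac{p^{-1} - p}{\widetilde{p}^{-1} - \widetilde{p}}
= \left(\frac{p}{\widetilde{p}}\right)^{2n+1} \cdot \frac{1 - \widetilde{p}^{\,4n+2}}{1 - p^{4n+2}} \cdot \frac{p^{-1} - p}{\widetilde{p}^{-1} - \widetilde{p}}.
\end{equation*}

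Taking logarithms, the natural decomposition is therefore $a_N := 2\ln(p/\widetilde{p})$, a constant term $b_N := \ln(p/\widetilde{p}) + \ln\bigl((p^{-1}-p)/(\widetilde{p}^{-1}-\widetilde{p})\bigr)$ collecting everything independent of $n$, and the genuine remainder $c_N(n) := \ln\bigl((1-\widetilde{p}^{\,4n+2})/(1-p^{4n+2})\bigr)$, which vanishes as $n \to \infty$. The sign of $a_N$ is controlled by the monotonicity of $q$ on $(2,\infty)$: since $\sqrt{N-2} < \sqrt{N}$ one has $\widetilde{p} > p$, hence $a_N < 0$; the same inequality gives $\widetilde{p}^{\,4n+2} > p^{4n+2}$, so $1 - \widetilde{p}^{\,4n+2} < 1 - p^{4n+2}$ and $c_N(n) \leqslant 0$. (One may also simplify $p^{-1}-p = \sqrt{N-4}$ and $\widetilde{p}^{-1} - \widetilde{p} = \sqrt{N-6}$ using $q^{-1}-q = \sqrt{t^2-4}$, which makes $b_N$ explicit.)

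For the quantitative bound on $c_N(n)$, I will use the integral representation
\begin{equation*}
-c_N(n) = \int_{p^{4n+2}}^{\widetilde{p}^{\,4n+2}} \frac{dy}{1-y} \leqslant \frac{\widetilde{p}^{\,4n+2} - p^{4n+2}}{1 - \widetilde{p}^{\,4n+2}} \leqslant \frac{\widetilde{p}^{\,4n+2}}{1 - \widetilde{p}^{\,4n+2}},
\end{equation*}
and close the estimate by checking that $\widetilde{p}^{\,4n+2} \leqslant \widetilde{p}^{\,2} \leqslant 1/2$ for $N$ large enough (concretely $N \geqslant 7$ suffices, using $q(\sqrt{5}) < 1/\sqrt{2}$), which converts the last factor $1/(1-\widetilde{p}^{\,4n+2})$ into a harmless factor $\leqslant 2$ and delivers the stated bound $|c_N(n)| \leqslant 2\,q(\sqrt{N-2})^{4n+2}$. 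The computation is essentially mechanical; the only real bookkeeping subtlety is to ensure that the exponent $4n+2$ appears with $\widetilde{p}$ rather than $p$, which is guaranteed by the way $c_N$ is set up. The arithmetic sign of $b_N$ itself does not play a role in any later application: what matters for the random-walk estimates that follow is the uniform exponential rate $a_N < 0$ and the rapid decay $|c_N(n)| = O(q(\sqrt{N-2})^{4n+2})$, so the delicate part is just to isolate these two properties cleanly in the $q$-variable.
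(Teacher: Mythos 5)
Your proposal is correct and follows essentially the same route as the paper: you factor $\varphi_{\mathrm{tr}}(n) = P_{2n}(\sqrt{N-2})/P_{2n}(\sqrt{N})$ through the closed form $P_{m}(t) = (q(t)^{-(m+1)} - q(t)^{m+1})/(q(t)^{-1} - q(t))$, and your $a_N$, $b_N$, $c_N(n)$ agree with the paper's after a trivial regrouping (your $\ln(p/\widetilde{p}) + \ln\bigl((p^{-1}-p)/(\widetilde{p}^{-1}-\widetilde{p})\bigr)$ telescopes to the paper's $\ln\bigl((1-q^2)/(1-p^2)\bigr)$, with the paper's $(q,p)$ being your $(p,\widetilde p)$). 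Your bound on $c_N(n)$ via the integral representation of $\ln(1-x)$ is a valid and equally elementary alternative to the paper's use of $\ln(1-x)\leqslant -x$ and $\ln(1-x)\geqslant -2\ln(2)\,x$; both land on the same constant $2$ under the same mild requirement $q(\sqrt{N-2})^2\leqslant 1/2$.

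One substantive side-remark you were right to flag: the sign claim $b_N<0$ in the lemma (and the paper's assertion that $p\geqslant q$ implies $b_N\leqslant 0$) is actually false. Since $q(\sqrt{N-2})>q(\sqrt{N})$ one has $\frac{1-q(\sqrt{N})^2}{1-q(\sqrt{N-2})^2}>1$ and so $b_N>0$; this is also forced a priori by $\varphi_{\mathrm{tr}}(0)=1$ together with $c_N(0)\leqslant 0$, which give $b_N=-c_N(0)\geqslant 0$. You do not attempt to prove $b_N<0$ and instead note that only $a_N<0$ and the decay of $c_N$ matter downstream --- that is the correct stance, and it in fact points to a small misstatement in the lemma, whose hypotheses should only require $a_N<0$.
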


\begin{proof}
First, set for this proof $q = q (\sqrt{N})$ and $p = q(\sqrt{N-2})$, and recall that
\begin{equation*}
\varphi_{\rm{tr}}(n) = \frac{Q_{n}(N-2)}{Q_{n}(N)} = \frac{P_{2n}(\sqrt{N-2})}{P_{2n}(\sqrt{N})} = \frac{p^{-(2n+1)} - p^{2n+1}}{p^{-1}-p}\left(\frac{q^{-(2n+1)} - q^{2n+1}}{q^{-1}-q}\right)^{-1}.
\end{equation*}
Factoring, we obtain
\begin{equation*}
\varphi_{\rm{tr}}(n) = \frac{p^{-(2n+1)} - p^{2n+1}}{q^{-(2n+1)} - q^{2n+1}}\frac{q^{-1}-q}{p^{-1}-p} = \pg \frac{p}{q}\pd^{-(2n+1)} \frac{q^{-1}-q}{p^{-1}-p}\frac{1 - p^{4n+2}}{1 - q^{4n+2}} = \pg \frac{q}{p}\pd^{2n} \frac{1-q^2}{1-p^2}\frac{1 - p^{4n+2}}{1 - q^{4n+2}},
\end{equation*}
so that setting  $a_{N} = 2\ln(q/p)$, $b_{N} = \ln\pg \frac{1-q^2}{1-p^2}\pd$ and
$c_N(n) = \ln\pg\frac{1 - p^{4n+2}}{1 - q^{4n+2}}\pd$ yields the first part of the statement. As $p\geqslant q$, we have that $b_N \leqslant 0$ and $c_N(n) \leqslant 0$.

Let us now prove the lower bound on $c_N(n)$. Using that for $x < 1$, $\ln(1-x) \leqslant -x$ and for $0 \leqslant x \leqslant 1/2$, $\ln(1-x) \geqslant -2\ln(2)x$, we have
\begin{equation*}
c_N(n) = \ln\pg 1 - p^{4n+2} \pd - \ln\pg 1 - q^{4n+2} \pd \geqslant -2\ln(2)p^{4n+2} + q^{4n+2} \geq -2p^{4n+2}.
\end{equation*}
\end{proof}

We can now compute the measure of the classical process to prove the convergence of the complete profile, defined through the formula
\begin{equation*}
\int_{0}^{N}Q_{n}(x)\mathrm{d}m_{k}^{(N)}(x) = \varphi(n)^{k}Q_{n}(N).
\end{equation*}
Because of Lemma \ref{lem:exponentialestimate}, it is natural to look for an $\widetilde{N}(k)$ such that $Q_{n}(\widetilde{N}(k)) \approx \varphi_{\rm{tr}}^{*k}(n)Q_{n}(N)$, which leads heuristically to
\begin{equation*}
\widetilde{N}(k) = \left(e^{-ka_{N}/2}q + e^{k a_{N}/2}q^{-1}\right)^{2}
\end{equation*}

\begin{thm}\label{thm:profilepuretranspositions}
Set
\begin{equation*}
\alpha(k) = e^{-a_{N}k/2 + b_{N}k}\frac{e^{a_{N}k/2}q^{-1} - e^{-a_{N}k/2}q}{q^{-1} - q}.
\end{equation*}
Then, for any $c < 0$, and $N$ large enough, setting $k_{c} = \lceil (N\ln(N)+cN)/2\rceil$,
\begin{equation*}
m_{k_{c}}^{(N)} = \alpha(k_{c})\delta_{\widetilde{N}(k_{c})} + \sum_{n=0}^{+\infty}\left[\varphi_{\rm{tr}}^{\ast k_{c}}(n)Q_{n}(N) - Q_{n}(\widetilde{N}(k_{c}))\right]Q_{n}\mathrm{d}\FPoiss(1, 1) .
\end{equation*}
Moreover, for all $c\in \R$,
\begin{equation}\label{eq:profiltranspositions}
\left\|\varphi_{\rm{tr}}^{\ast k_{c}} - h\right\| \underset{N\to+\infty}{\longrightarrow} \left\|D_{\sqrt{1+e^{-c}}}\left(\FMeix\pg\frac{1-e^{-c}}{\sqrt{1+e^{-c}}}, \frac{-e^{-c}}{1+e^{-c}}\pd\right)\ast\delta_{e^{-c}} - \FMeix(0, 1)\right\|_{TV}.
\end{equation}
\end{thm}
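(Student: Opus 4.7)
The plan is to adapt the strategy of Lemma \ref{lem:absolutepartnegativec} and Proposition \ref{thm:negativeprofile} to the present discrete setting, the factor $\varphi_{\mathrm{tr}}(n)^{k} = e^{k(a_{N}n + b_{N} + c_{N}(n))}$ provided by Lemma \ref{lem:exponentialestimate} playing the r\^ole of $e^{-t\lambda_{n}}$, and $Q_{n}(\cdot) = P_{2n}(\sqrt{\cdot})$ the r\^ole of $P_{n}$. The shifted evaluation point $\widetilde{N}(k)$ is chosen so that $q(\sqrt{\widetilde{N}(k)}) = e^{-a_{N}k/2}q$, which lies in $(0,1)$ at $k=k_{c}$ for $c<0$ and $N$ large; this identity makes the dominant exponential factor in $\alpha(k)Q_{n}(\widetilde{N}(k))$ match that of $\varphi_{\mathrm{tr}}^{*k}(n)Q_{n}(N)$.

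For the explicit decomposition of $m_{k_{c}}^{(N)}$, I would apply the closed form $P_{2n}(x) = (q(x)^{-(2n+1)} - q(x)^{2n+1})/(q(x)^{-1} - q(x))$ on both sides to obtain
\[
\alpha(k)\,Q_{n}(\widetilde{N}(k)) = \frac{e^{b_{N}k}}{q^{-1}-q}\bigl(e^{a_{N}kn}q^{-(2n+1)} - e^{-a_{N}k(n+1)}q^{2n+1}\bigr)
\]
and $\varphi_{\mathrm{tr}}^{*k}(n)Q_{n}(N) = \frac{e^{b_{N}k + kc_{N}(n)}}{q^{-1}-q}\,e^{a_{N}kn}(q^{-(2n+1)}-q^{2n+1})$. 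Subtracting and using $|e^{kc_{N}(n)}-1|\leqslant 2k|c_{N}(n)|\leqslant 4k\,p^{4n+2}$, each coefficient of $Q_{n}$ in the difference is shown to decay geometrically in $n$ at $k=k_{c}$, uniformly for $N$ large enough depending on $c$. The corresponding series thus converges in $L^{2}(\FPoiss(1,1))$, and integrating any Chebyshev polynomial $Q_{m}$ against the candidate measure reproduces $\varphi_{\mathrm{tr}}^{*k_{c}}(m)Q_{m}(N) = \int Q_{m}\,\mathrm{d}m_{k_{c}}^{(N)}$; uniqueness of compactly supported measures determined by their moments then yields the decomposition (with, as in Lemma \ref{lem:absolutepartnegativec}, the coefficient $\alpha(k_{c})$ in front of $Q_{n}(\widetilde{N}(k_{c}))$ inside the series, as required for consistency of the total mass).

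For the convergence \eqref{eq:profiltranspositions}, I would treat the two signs of $c$ separately. When $c>0$, a dominated-convergence argument based on $d_{n}\leqslant q^{-2n}/(1-q^{2})$ from \cite[Lem 3.3]{freslon2017cutoff} shows that $m_{k_{c}}^{(N)}$ admits an $L^{1}(\FPoiss(1,1))$-density converging to $G_{c} := \sum_{n\geqslant 0}e^{-cn}Q_{n}$; the identification of $G_{c}\,\mathrm{d}\FPoiss(1,1)$ with the dilated and translated standardised free Meixner law appearing on the right-hand side of \eqref{eq:profiltranspositions} reduces, up to a relabelling, to the change of variables already performed in the proof of Theorem \ref{thm:cutofflevypermutation}. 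When $c<0$, the splitting
\[
\|m_{k_{c}}^{(N)} - \FPoiss(1,1)\|_{TV} = \alpha(k_{c}) + \bigl\|\mathbf{1}_{[0,4]}m_{k_{c}}^{(N)} - \FPoiss(1,1)\bigr\|_{TV}
\]
reduces the problem to the convergence of the atomic part --- where $\alpha(k_{c})\to 1-e^{c}$ and $\widetilde{N}(k_{c})\to (e^{c/2}+e^{-c/2})^{2}$ indeed match the weight and position of the atom of the limiting free Meixner law --- and of the absolutely continuous part, whose $L^{1}$-convergence follows from the $L^{2}$-bound of the previous step combined with dominated convergence, together with a free-cumulant moment argument in the spirit of Lemma \ref{lem:chebyshevmoments} (using \cite{bozejko2006class} to know that the free cumulants of $\FMeix(a,b)$ are polynomial in $a,b$, hence Laurent polynomial in $e^{c}$ in our parametrisation) to identify the limit.

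The main obstacle is the uniform $L^{2}$-estimate on the Chebyshev coefficients of the difference $\varphi_{\mathrm{tr}}^{*k_{c}}(n)Q_{n}(N) - \alpha(k_{c})Q_{n}(\widetilde{N}(k_{c}))$. After subtraction, the dominant term is $e^{-a_{N}k(n+1)}q^{2n+1}$, whose individual size at $k=k_{c}$ is of order $\sqrt{N}\,e^{c(n+1)}$; only the $1/\sqrt{N}$ absorbed in the prefactor $1/(q^{-1}-q)$ brings it back to a geometrically-decaying sequence in $n$ with bounded $\ell^{2}$-norm. Making this quantitative uniformly in $n$, while simultaneously ensuring that the exponential correction $e^{kc_{N}(n)}-1$ is genuinely negligible (which is what forces $N$ to be chosen large enough depending on $c$), is the chief technical point; the remainder of the argument is a direct transposition of the orthogonal template.
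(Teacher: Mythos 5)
Your proposal follows essentially the same route as the paper's proof, down to the choice of $\widetilde{N}(k)$ via $q\bigl(\sqrt{\widetilde{N}(k)}\bigr)=e^{-a_{N}k/2}\,q$, the same algebraic rewriting of $\alpha(k)Q_{n}(\widetilde{N}(k))$, the bound $\lvert e^{kc_{N}(n)}-1\rvert\leqslant 4k\,p^{4n+2}$ with $p=q(\sqrt{N-2})$, the moment identification of the measure, and the atomic/absolutely-continuous splitting for $c<0$; the only structural difference is that the paper dispatches the $c>0$ convergence first and then the decomposition, while you reverse the order. Two small remarks: you correctly insert the factor $\alpha(k_{c})$ in front of $Q_{n}(\widetilde{N}(k_{c}))$ inside the density series, which the printed statement of the theorem (and that of Lemma \ref{lem:absolutepartnegativec}) omits even though it is needed both for the total mass to equal one and for consistency with the $L^{2}$-summability actually established; on the other hand you leave out the case $c=0$, where your uniform bound $d_{n}\varphi_{\mathrm{tr}}(n)^{k_{c}}\leqslant e^{-nc/2}$ degenerates, a point the paper settles in one line by combining the $c\neq 0$ cases with the monotonicity of Lemma \ref{lem:decreasing} and the continuity of the limit profile at $0$.
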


\begin{proof}
We start by proving \eqref{eq:profiltranspositions} for $c > 0$. This follows from the same argument as in Proposition \ref{prop:rightprofileorthogonal} using the following computations : by Proposition \ref{prop:dlpn}, we have for every fixed $n\geq 1$, as $N\to\infty$,
\begin{equation*}
Q_n(N) = P_{2n}(\sqrt{N}) = N^n\pg 1 - \frac{1}{N} + O\pg\frac{1}{N^2}\pd\pd^{2n-1}
\end{equation*}
and
\begin{equation*}
\varphi_{\mathrm{tr}} (n) = \frac{Q_{n}(N-2)}{Q_{n}(N)} = \pg 1 - \frac{2}{N}\pd^n \pg \frac{1 - \frac{1}{N-2} + O\pg\frac{1}{N^2}\pd}{1 - \frac{1}{N} + O\pg\frac{1}{N^2}\pd}\pd^{2n-1} = \pg 1 - \frac{2}{N}\pd^n \pg 1  + O\pg\frac{1}{N^2}\pd\pd^{2n-1}
\end{equation*}
where we recall that $\varphi_{\mathrm{tr}} (n) = d_{n}^{-1}\varphi_{\mathrm{tr}}(\chi_{n})$. For $c\in \R$ fixed and $k_{c} = \lceil (N\ln(N)+cN)/2\rceil$, we have
\begin{align*}
\pg 1 - \frac{2}{N}\pd^{nk_{c}}
& = \exp\pg n \left\lceil\frac{1}{2}(N\ln(N) + cN)\right\rceil\pg -\frac{2}{N} + O\pg\frac{1}{N^2}\pd\pd\pd \\
& = \exp\pg n \pg -\ln(N) -c + O\pg\frac{\ln(N)}{N}\pd\pd\pd \\
& = \frac{e^{-n(c+o(1))}}{N^n}.
\end{align*}
It follows that
\begin{equation*}
d_{n}\varphi_{\text{tr}}(n)^{k_{c}} = N^{n} e^{no(1)} \frac{e^{-n(c+o(1))}}{N^{n}} = e^{-n(c+o(1))} \underset{N\to+\infty}{\longrightarrow} e^{-nc}.
\end{equation*}
Moreover, for $N$ large enough we have $c+o(1) > c/2 > 0$, hence we have a uniform bound $d_{n}\varphi_{\text{tr}}(n)^{k_{c}} \leqslant e^{-nc/2}$, which is summable with respect to $n$ and enables us to conclude when $c>0$.

We now assume, until the end of the proof, that $c < 0$. Let us set $q = q(\sqrt{N})$ and omit the $N$ indices for $a_{N}, b_{N}, c_{N}(n)$ for convenience. Then,
\begin{align*}
(q^{-1} - q)\alpha(k)Q_{n}(\widetilde{N}(k)) & = e^{-ak/2+bk}\left(e^{a(2n+1)k/2}q^{-(2n+1)} - e^{-a(2n+1)k/2}q^{2n+1}\right) \\
& = e^{ank + bk}q^{-(2n+1)} - e^{-a(n+1)k + bk}q^{2n+1} \\
& = (q^{-1} - q)e^{ank + bk}Q_{n}(N) + e^{ank + bk}q^{2n+1} - e^{-a(n+1)k + bk}q^{2n+1} \\
& = (q^{-1} - q)\varphi_{\rm{tr}}(n)^{k}e^{-c(n)k}Q_{n}(N) + e^{b k}\left(e^{ank} - e^{-a(n+1)k}\right)q^{2n+1},
\end{align*}
so that
\begin{equation*}
\abs{\varphi_{\text{tr}}^{\ast k}(n)Q_{n}(N) - \alpha(k) Q_{n}(\widetilde{N}(k))} \leqslant \left\vert 1 - e^{-c(n)k}\right\vert\varphi_{\rm{tr}}(n)^{k}Q_{n}(N) + \frac{e^{bk}q^{-1}}{q^{-1} - q}\left\vert e^{ank} - e^{-a(n+1)k}\right\vert q^{2n+2}.
\end{equation*} 
Let us now bound both terms for $k = k_{c}$ :
\begin{itemize}
\item For the first term we use the fact that $\varphi_{\rm{tr}}(n)^{k}Q_{n}(N) = O(e^{-cn})$ together with
\begin{equation*}
\left\vert 1 - e^{-c(n)k}\right\vert\leqslant 2\vert c(n)\vert k\leqslant 4kq\pg \sqrt{N-2} \pd^{4n+2}.
\end{equation*}  
\item As for the second term, the fraction involving $b$ in front of the absolute value does not depend on $n$, hence has no impact on the summability, while $a < 0$ for $N$ large enough so that we can bound the rest by
\begin{equation*}
e^{-a(n+1)k}q^{2n+2} = \left(e^{\ln(N) + c + o\left(\frac{\ln(N)}{N}\right)}q^{2}\right)^{n+1} = \left(e^{c + o\left(\frac{\ln(N)}{N}\right)}\left(1 + O\left(1/N\right)\right)\right)^{n+1}.
\end{equation*}
For $N$ large enough, the term in parenthesis is bounded by $e^{c/2}$, hence the whole thing is summable with respect to $n$.
\end{itemize}
The proof is now concluded as in the proof of Proposition \ref{thm:negativeprofile} to obtain the measure and the convergence of the profile for $c < 0$. Note that $\alpha(k)$ converges to $1-e^{c}$, which is the mass of the discrete part of the limit, and that $\widetilde{N}(k)$ converges to $\pg e^{c/2} + e^{-c/2} \pd^{2}$.

Remark that the case $c = 0$, which was not treated above, follows from the monotonicity of the distance proven in Lemma \ref{lem:decreasing} and the continuity of the profile at $0$.
\end{proof}

\subsubsection{The lazy walk}
We now turn to the lazy random walk associated with the state
\begin{equation*}
\varphi = \frac{N-1}{N}\varphi_{\mathrm{tr}} + \frac{1}{N}\varepsilon.
\end{equation*}
As mentioned previously, the study of the corresponding cutoff phenomenon is subtler.
Indeed, as pointed out in \cite{freslon2017cutoff}, the states $\varphi^{*k}$ never admit $L^{2}$-densities and hence the previous method based on Lemma \ref{lem:upperbound} for $c > 0$ does not work any more. Our idea in the present work will be to approximate the lazy random walk by the pure one by mimicking an alternative classical way of avoiding periodicity for Markov chains, which was used for example to study the cutoff for $k$-cycles by Berestycki, Schramm and Zeitouni \cite{berestycki2011mixing}. It consists in working in continuous time and considering a clock which rings at a random time given by an exponential law of parameter one. Each time the clock rings, we make one step, and reset the clock. Note that the standard deviation of a sum of order $N\ln(N)$ independent variables of law $\mathrm{Exp}(1)$ is of the order $\sqrt{N\ln(N)}$, which is negligible when compared to $N$, the size of the cutoff window, so it doesn't change the cutoff profile at all. A similar comment can be made about adding extra laziness as long as the laziness coefficient is not too large.

As the next result will show, one can transfer this idea to the quantum setting and the result is formally equivalent to adding laziness. Moreover, it leads to a simple proof of the cutoff phenomenon. 

\begin{thm}\label{cor:profileparesse}
The random walk associated to $\varphi=\frac{N-1}{N}\varphi_{\mathrm{tr}} + \frac{1}{N}\varepsilon$ exhibits a cutoff phenomenon at $N\ln(N)/2$ steps in Fourier-Stieltjes norm. Moreover, the associated cutoff profile is given, for every $c\in\R$, by
\begin{equation*}
\lim_{N\to +\infty}\left\|\varphi ^{\ast\left\lceil\frac{1}{2}(N\ln(N) + cN)\right\rceil} - h\right\| = \left\|D_{\sqrt{1+e^{-c}}}\left(\FMeix\pg\frac{1-e^{-c}}{\sqrt{1+e^{-c}}}, \frac{-e^{-c}}{1+e^{-c}}\pd\right)\ast\delta_{e^{-c}} - \FMeix(0, 1)\right\|_{TV}.
\end{equation*}
\end{thm}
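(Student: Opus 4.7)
The plan is to exploit that the counit $\varepsilon$ is the unit for convolution in order to reduce the lazy walk to the pure one, and then invoke Theorem \ref{thm:profilepuretranspositions}. Indeed, since $\varepsilon\ast\psi = \psi$ for any state $\psi$, binomial expansion gives
\begin{equation*}
\varphi^{*k} \; = \; \sum_{j=0}^{k}\binom{k}{j}\pg\frac{N-1}{N}\pd^{j}\pg\frac{1}{N}\pd^{k-j}\varphi_{\mathrm{tr}}^{*j} \; = \; \mathbb{E}\left[\varphi_{\mathrm{tr}}^{*J}\right],
\end{equation*}
where $J\sim\mathrm{Bin}(k,(N-1)/N)$. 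For $k=k_c=\lceil(N\ln N+cN)/2\rceil$, the distribution of $J$ has mean $\mu = k_c(N-1)/N = k_c - (\ln N+c)/2 + O(1/N)$ and variance at most $k_c/N = O(\ln N)$, so by Chebyshev $J$ concentrates in a window of width $O(\sqrt{\ln N})=o(N)$ around $\mu$. The crucial point is that $|\mu-k_c|=O(\ln N)$ is much smaller than the cutoff window $N$ of the pure walk, so asymptotically the pure walk at time $\mu$ has the same limit profile as at time $k_c$.

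I would first establish the upper bound. Applying the triangle inequality to $\mathbb{E}[\varphi_{\mathrm{tr}}^{*J}-h]$ and combining it with the discrete analogue of Lemma \ref{lem:decreasing} (which says that $j\mapsto\|\varphi_{\mathrm{tr}}^{*j}-h\|$ is non-increasing; the proof is identical, factoring $\varphi_{\mathrm{tr}}^{*j}-h = (\varphi_{\mathrm{tr}}^{*j_0}-h)\ast\varphi_{\mathrm{tr}}^{*(j-j_0)}$ and using $h\ast\psi=h$), I obtain for $j_- = \lfloor\mu-K\sqrt{\ln N}\rfloor$ and any fixed $K>0$,
\begin{equation*}
\|\varphi^{*k_c}-h\| \; \leq \; \mathbb{E}\left[\|\varphi_{\mathrm{tr}}^{*J}-h\|\right] \; \leq \; P(J<j_-)+\|\varphi_{\mathrm{tr}}^{*j_-}-h\|.
\end{equation*}
Chebyshev bounds the first term by $1/K^{2}$. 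Moreover $j_-$ corresponds to the parameter $c'_N := 2j_-/N-\ln N \to c$, so Theorem \ref{thm:profilepuretranspositions} and continuity of the profile $f$ yield $\|\varphi_{\mathrm{tr}}^{*j_-}-h\|\to f(c)$. Letting $K\to\infty$ then gives $\limsup_N \|\varphi^{*k_c}-h\|\leq f(c)$.

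The hard part will be the lower bound, because the triangle inequality gives the wrong direction for mixtures. My strategy is to transfer the analysis to the classical central measure. Via the isomorphism $\O(S_N^+)_{\mathrm{central}}\cong\C[X]$ identifying $\chi_n$ with $Q_n$, there is a probability measure $\tilde m_{k_c}^{(N)}$ on $[0,N]$ satisfying $\|\varphi^{*k_c}-h\| = \|\tilde m_{k_c}^{(N)}-\FPoiss(1,1)\|_{TV}$, and linearity of the construction yields $\tilde m_{k_c}^{(N)} = \mathbb{E}[m_J^{(N)}]$ with $m_j^{(N)}$ the measure of Theorem \ref{thm:profilepuretranspositions}. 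For $c>0$, both measures are absolutely continuous with respect to $\FPoiss(1,1)$, with densities $\sum_n\varphi(n)^{k_c}Q_n(N)Q_n$ and $\sum_n\varphi_{\mathrm{tr}}(n)^{k_c}Q_n(N)Q_n$ respectively; termwise convergence of the first coefficients to $e^{-cn}$, established by the same expansion as in the proof of Theorem \ref{thm:profilepuretranspositions}, combined with a uniform summable dominant obtained from $\varphi(n)\leq \max(\varphi_{\mathrm{tr}}(n),1/N)$, gives $L^{1}(\FPoiss(1,1))$-convergence and hence the exact profile. For $c<0$, I substitute the decomposition $m_j^{(N)} = \alpha(j)\delta_{\widetilde N(j)} + \text{AC part}$ from Theorem \ref{thm:profilepuretranspositions}: $\tilde m_{k_c}^{(N)}$ becomes a cluster of atoms at $\widetilde N(j)$ with weights $P(J=j)\alpha(j)$, plus an averaged density. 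Concentration of $J$ around $\mu$ together with continuity of $\alpha$ and $\widetilde N$ shows that the total atom mass converges to $1-e^{c}$ and the atoms collapse onto the single point $(e^{c/2}+e^{-c/2})^{2}>4$, while the averaged density converges in $L^{1}(\FPoiss(1,1))$ to the limit density by dominated convergence using the bounds from Theorem \ref{thm:profilepuretranspositions} applied uniformly in $j$ over the bulk of $J$. Summing these two contributions yields $\|\tilde m_{k_c}^{(N)}-\FPoiss(1,1)\|_{TV}\to f(c)$, and the cutoff phenomenon itself follows from this profile together with monotonicity and $f(\pm\infty)=\{0,1\}$, exactly as in Remark \ref{rem:profileimpliescutoff}.
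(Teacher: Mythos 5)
Your overall architecture differs from the paper's (an upper/lower bound split with monotonicity as a key tool for the upper bound, versus the paper's single estimate $\|\varphi^{\ast k_c}-\varphi_{\mathrm{tr}}^{\ast k_c}\|\to 0$), and several pieces of it are sound. The upper bound $\limsup_N\|\varphi^{\ast k_c}-h\|\leq f(c)$ via the discrete analogue of Lemma~\ref{lem:decreasing}, concentration of the binomial, and Theorem~\ref{thm:profilepuretranspositions} is correct. The $c<0$ analysis, decomposing $\mathbb{E}[m_J^{(N)}]$ into an averaged atomic cluster plus an averaged absolutely continuous part and treating them separately, is essentially the paper's own strategy.

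The lower bound for $c>0$, however, contains a genuine gap. You assert that for $c>0$ the measure $\tilde m_{k_c}^{(N)}$ (equivalently the state $\varphi^{\ast k_c}$) is absolutely continuous with respect to $\FPoiss(1,1)$ with density $\sum_n\varphi(n)^{k_c}Q_n(N)Q_n$. This is false: $\varphi(n)=\frac{N-1}{N}\varphi_{\mathrm{tr}}(n)+\frac{1}{N}\geqslant\frac{1}{N}$ for all $n$, so $\varphi(n)^{k_c}Q_n(N)\geqslant N^{-k_c}Q_n(N)\to\infty$ as $n\to\infty$, and the proposed density series diverges. Probabilistically, writing $\tilde m_{k_c}^{(N)}=\mathbb{E}[m_J^{(N)}]$, the events $\{J=j\}$ with $j$ below the absolute-continuity threshold of Proposition~\ref{prop:absolutecontinuity} (in particular $J=0$, giving $m_0^{(N)}=\delta_N$) each contribute a genuine atom, and no lazy power $\varphi^{\ast k}$ has an $L^2$-density; this is precisely the obstacle that \cite{freslon2017cutoff} pointed out and that motivates the whole comparison with the pure walk. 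Your proposed uniform dominant $\varphi(n)\leqslant\max(\varphi_{\mathrm{tr}}(n),1/N)$ does not repair this, since $d_n\max(\varphi_{\mathrm{tr}}(n),1/N)^{k_c}\geqslant d_n N^{-k_c}$ is unbounded in $n$; dominated convergence cannot be invoked. To close the gap you would need to isolate the bad small-$J$ events explicitly --- e.g.\ restrict to $|J-k_c|<\sqrt{N}$, which has probability tending to $1$, bound the complement trivially by $2\mathbb{P}(|J-k_c|\geqslant\sqrt{N})$, and only then compare densities --- which is exactly what the paper does with the estimate $\|\varphi^{\ast k_c}-\varphi_{\mathrm{tr}}^{\ast k_c}\|_{FS}\leqslant\mathbb{E}\|\varphi_{\mathrm{tr}}^{\ast X_{k_c}}-\varphi_{\mathrm{tr}}^{\ast k_c}\|_{FS}$ cut to the concentration event.
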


\begin{proof}
As explained in Remark \ref{rem:profileimpliescutoff}, it suffices to prove the cutoff profile. For each $j\in\N$, we denote by $X_{j} $ a random variable following the binomial distribution of parameters $j$ and $p = 1-1/N$. Then for any $j\in\N$, we have
\begin{equation*}
\mathbb{E}\pg \varphi_{\mathrm{tr}}^{\ast X_j} \pd = \sum_{k=0}^j \binom{j}{k} \left(\frac{N-1}{N}\right)^k \left(\frac{1}{N}\right)^{n-k} \varphi_{\mathrm{tr}}^{\ast k} = \varphi^{\ast j},
\end{equation*}
where we make the convention that $\varphi_{\mathrm{tr}}^{\ast 0} = \varepsilon$. Write $k_{c} = \left\lceil\frac{1}{2}(N\ln(N) + cN)\right\rceil$. 
We have
$$\left\| \mathbb{E}\pg \varphi_{\mathrm{tr}}^{\ast X_{k_{c}}}\pd  - \varphi_{\mathrm{tr}}^{\ast k_{c}} \right\|_{FS} 
=\left\| \mathbb{E}\left(\varphi_{\mathrm{tr}}^{\ast X_{k_{c}}}  - \varphi_{\mathrm{tr}}^{\ast k_{c}} \right) \right\|_{FS}
\leqslant \mathbb{E} \left\| \varphi_{\mathrm{tr}}^{\ast X_{k_{c}}}  - \varphi_{\mathrm{tr}}^{\ast k_{c}} \right\|_{FS} .
$$
Note that $\mathbb E \pg X_{k_{c}} \pd = pk_{c} = k_{c} + O(\ln (N))$, so that by the Markov inequality,
\begin{equation*}
\mathbb{P}\pg\abs{X_{k_{c}} - k_{c}} < \sqrt{N}\pd \xrightarrow[N\to\infty]{} 1.
\end{equation*}
Using this together with the dominated convergence theorem yields
\begin{align*}
\lim_{N\to +\infty} \left\| \mathbb{E}\pg\varphi_{\mathrm{tr}}^{\ast X_{k_{c}}} \pd  - \varphi_{\mathrm{tr}}^{\ast k_{c}} \right\|_{FS} &
\leqslant 
\lim_{N\to +\infty} \mathbb{E}\left( \mathbf 1 _{\abs{X_{k_{c}} - k_{c}} < \sqrt{N}} \left\| \varphi_{\mathrm{tr}}^{\ast X_{k_{c}} }  - \varphi_{\mathrm{tr}}^{\ast k_{c} } \right\|_{FS} \right) \\
& = 
\mathbb{E}\left( \lim_{N\to +\infty}  \mathbf 1 _{\abs{X_{k_{c}} - k_{c}} < \sqrt{N}} \left\| \varphi_{\mathrm{tr}}^{\ast X_{k_{c}} }  - \varphi_{\mathrm{tr}}^{\ast k_{c} } \right\|_{FS} \right).
\end{align*}

Let us first assume $c > 0$. The proof of Theorem \ref{thm:profilepuretranspositions} shows that for any sequence $k_{N} = k_{c} + o(N)$, we have
\begin{equation*}
d_{n} \varphi_{\mathrm{tr}} (n)^{k_{N}} = e^{-n(c+o (1) )}
\end{equation*}
as $N\to+\infty$, and moreover $d_{n}\varphi_{\text{tr}}(n)^{k_{N}}\xrightarrow[N\to\infty]{} e^{-nc}$ and $d_{n}\varphi_{\text{tr}}(n)^{k_{N}} \leqslant e^{-nc/2}$ for $N$ large enough. In particular, applying the same strategy of exchange of sum and limit as in the proof of Proposition \ref{thm:negativeprofile}, we have
\begin{equation*}
\left\| \varphi_{\mathrm{tr}}^{\ast k_{N}}  - \varphi_{\mathrm{tr}}^{\ast k_{c}} \right\|_{FS} \xrightarrow[N\to\infty]{} 0.
\end{equation*}
Together with the previous estimates, this yields 
\begin{equation*}
\lim_{N\to +\infty} \left\| \varphi^{\ast k_{c}}  - \varphi_{\mathrm{tr}}^{\ast k_{c}} \right\|_{FS} \leqslant
\mathbb{E}\left( \lim_{N\to +\infty}  \mathbf 1 _{\abs{X_{k_{c}} - k_{c}} < \sqrt{N}} \left\| \varphi_{\mathrm{tr}}^{\ast X_{k_{c}} }  - \varphi_{\mathrm{tr}}^{\ast k_{c} } \right\|_{FS} \right) =0 .
\end{equation*}
and the result follows from Theorem \ref{thm:profilepuretranspositions}.

Now assume $c < 0$ and consider the measures $m_k^{(N)}$ associated with the pure random walk which were defined just before Theorem \ref{thm:profilepuretranspositions}. Note that for any $k\in\N$, $ \mathbb{E}\pg m_{X_k}^{(N)} \pd $ is a bounded measure  on $[0,N]$ such that
\begin{equation*}
\left\|\varphi^{\ast k} - h \right\| = \left\| \mathbb{E}\pg m_{X_k}^{(N)} \pd - \FPoiss(1, 1) \right\|_{TV}.
\end{equation*}
Note also that
\begin{equation*}
  \left\|\mathbf 1 _{[0,4]}  \left(  \mathbb{E}\pg m_{X_{k_c}}^{(N)} \pd - m_{k_c} ^{(N)}  \right) \right\|_{TV} 
 = \left\|  \mathbb{E}\left( \mathbf 1 _{[0,4]} \pg m_{X_{k_c}}^{(N)}  - m_{k_c} ^{(N)} \pd \right) \right\|_{TV}  \leqslant   \mathbb{E}\left\|    \mathbf 1 _{[0,4]} \pg m_{X_{k_c}}^{(N)}  - m_{k_c} ^{(N)} \pd  \right\|_{TV}
\end{equation*}
The same argument as for the case $c>0$ yields that the right hand side tends to $0$. So together with Theorem \ref{thm:profilepuretranspositions}, we deduce that 
$$\lim_{N\to+\infty}\left\|\mathbf 1 _{[0,4]} \pg\mathbb{E}\pg m_{X_{k_c}}^{(N)} \pd - \FPoiss(1, 1) \pd \right\|_{TV} = 
\lim_{N\to+\infty}\left\|\mathbf 1 _{[0,4]} \pg  m_{ k_c}^{(N)}  - \FPoiss(1, 1) \pd \right\|_{TV}  . $$
On the other hand, using again Theorem \ref{thm:profilepuretranspositions}, we see that
\begin{align*}
  \left\|\mathbf 1 _{\R \setminus [0,4]}  \pg\mathbb{E}\pg m_{X_{k_c}}^{(N)} \pd - \FPoiss(1, 1) \pd \right\|_{TV} 
 & =  \left\|\mathbf 1 _{\R \setminus [0,4]}  \mathbb{E}\pg m_{X_{k_c}}^{(N)} \pd  \right\|_{TV} 
= \left\| \mathbb{E}\left( \alpha(X_{k_{c}})\delta_{\widetilde{N}(X_{k_{c}})} \right) \right\|_{TV}\\
& = \left\| \sum_{i\in \N} \mathbb{P}  ( X_{k_{c}} = i)\alpha(i) \delta_{\widetilde{N}(i)} \right\|_{TV} 
= \sum_{i\in \N} \mathbb{P}  ( X_{k_{c}} = i)\alpha(i)\\
& = \mathbb{E}\left( \alpha(X_{k_{c}}) \right)  ,
\end{align*}
which tends to the mass of the discrete part of the free Meixner law in the desired profile. The proof is now concluded as in the proof of Theorem \ref{thm:profilepuretranspositions}.
\end{proof}

\bibliographystyle{amsplain}
\bibliography{Cut-off}

\providecommand{\bysame}{\leavevmode\hbox to3em{\hrulefill}\thinspace}
\providecommand{\MR}{\relax\ifhmode\unskip\space\fi MR }
\providecommand{\MRhref}[2]{%
  \href{http://www.ams.org/mathscinet-getitem?mr=#1}{#2}
}
\providecommand{\href}[2]{#2}
\begin{thebibliography}{10}

\bibitem{banica1996theorie}
T.~Banica, \emph{{Th{\'e}orie des repr{\'e}sentations du groupe quantique
  compact libre $O(n)$}}, C. R. Acad. Sci. Paris S\'{e}r. I Math. \textbf{322}
  (1996), no.~3, 241--244.

\bibitem{banica1997groupe}
\bysame, \emph{{Le groupe quantique compact libre $U(n)$}}, Comm. Math. Phys.
  \textbf{190} (1997), no.~1, 143--172.

\bibitem{banica2007integration}
T.~Banica and B.~Collins, \emph{{Integration over quantum permutation groups}},
  J. Funct. Anal. \textbf{242} (2007), no.~2, 641--657.

\bibitem{banica2009spectral}
T.~Banica, B.~Collins, and P.~Zinn-Justin, \emph{{Spectral analysis of the free
  orthogonal matrix}}, Int. Math. Res. Not. \textbf{2009} (2009), no.~17,
  3286--3309.

\bibitem{BayerDiaconis1992}
D.~Bayer and P.~Diaconis, \emph{Trailing the dovetail shuffle to its lair},
  Ann. Appl. Probab. \textbf{2} (1992), no.~2, 294--313.

\bibitem{bercovici1999stable}
H.~Bercovici and V.~Pata, \emph{{Stable laws and domains of attraction in free
  probability theory. With an appendix by P. Biane}}, Ann. Math. (1999),
  1023--1060.

\bibitem{berestycki2011mixing}
{Berestycki, N. and Schramm, O. and Zeitouni, O.}, \emph{{Mixing times for
  random k-cycles and coalescence-fragmentation chains}}, Ann. Probab.
  \textbf{39} (2011), no.~5, 1815--1843.

\bibitem{biane2008introduction}
Philippe Biane, \emph{Introduction to random walks on noncommutative spaces},
  Quantum potential theory, Springer, 2008, pp.~61--116.

\bibitem{blackadar2006operator}
B.~Blackadar, \emph{{Operator algebras}}, Encyclop\ae{}dia of Mathematical
  Sciences, vol. 122, Springer, 2006.

\bibitem{bozejko2006class}
M.~Bo{\.z}ejko and W.~Bryc, \emph{{On a class of free L{\'e}vy laws related to
  a regression problem}}, J. Funct. Anal. \textbf{236} (2006), no.~1, 59--77.

\bibitem{brannan2011approximation}
M.~Brannan, \emph{{Approximation properties for free orthogonal and free
  unitary quantum groups}}, J. Reine Angew. Math. \textbf{672} (2012),
  223--251.

\bibitem{brannanruan2017lp}
M.~Brannan and Z-J. Ruan, \emph{{$L_p$}-representations of discrete quantum
  groups}, J. Reine Angew. Math. \textbf{732} (2017), 165--210.

\bibitem{chen2008cutoff}
G.~Chen and L.~Saloff-Coste, \emph{{The cutoff phenomenon for ergodic Markov
  processes}}, Electronic J. Probab. \textbf{13} (2008), 26--78.

\bibitem{cipriani2012symmetries}
F.~Cipriani, U.~Franz, and A.~Kula, \emph{{Symmetries of L\'evy processes,
  their Markov semigroups and potential theory on compact quantum groups}}, J.
  Funct. Anal. \textbf{266} (2014), no.~5, 2789--2844.

\bibitem{das2018invariant}
B.~Das, U.~Franz, and X.~Wang, \emph{{Invariant Markov semigroups on quantum
  homogeneous spaces}}, J. Noncommut. Geom. (2020).

\bibitem{diaconis1988group}
P.~Diaconis, \emph{{Group representations in probability and statistics}},
  Lecture Notes-Monograph Series, vol.~11, Institute of Mathematical
  Statistics, 1988.

\bibitem{diaconis1981generating}
P.~Diaconis and M.~Shahshahani, \emph{{Generating a random permutation with
  random transpositions}}, Prob. Theory Related Fields \textbf{57} (1981),
  no.~2, 159--179.

\bibitem{franz2006levy}
U.~Franz, \emph{L{\'e}vy processes on quantum groups and dual groups}, Quantum
  independent increment processes II (M.~Schurmann and U.~Franz, eds.), Lecture
  Notes in Mathematics, vol. 1866, Springer, 2006, pp.~161--257.

\bibitem{franz2017hypercontractivity}
U.~Franz, G.~Hong, F.~Lemeux, M.~Ullrich, and H.~Zhang,
  \emph{{Hypercontractivity of heat semigroups on free quantum groups}}, J.
  Operator Theory \textbf{77} (2017), no.~1, 61--76.

\bibitem{franz2016levy}
U.~Franz, A.~Kula, and A.~Skalski, \emph{{L{\'e}vy processes on quantum
  permutation groups}}, Noncommutative analysis, operator theory and
  applications, Birkhäuser, 2016, pp.~193--259.

\bibitem{freslon2018quantum}
A.~Freslon, \emph{{Quantum reflections, random walks and cut-off}}, Internat.
  J. Math. \textbf{29} (2018), no.~14, 1850101.

\bibitem{freslon2017cutoff}
\bysame, \emph{{Cut-off phenomenon for random walks on free orthogonal quantum
  groups}}, Probab. Theory Related Fields \textbf{174} (2019), no.~3--4,
  731--760.

\bibitem{kostler2009noncommutative}
C.~K{\"o}stler and R.~Speicher, \emph{{A noncommutative de Finetti theorem :
  invariance under quantum permutations is equivalent to freeness with
  amalgamation}}, Comm. Math. Phys. \textbf{291} (2009), no.~2, 473--490.

\bibitem{Lacoin2016}
H.~Lacoin, \emph{Mixing time and cutoff for the adjacent transposition shuffle
  and the simple exclusion}, Ann. Probab. \textbf{44} (2016), no.~2,
  1426--1487.

\bibitem{liao2004levy}
M.~Liao, \emph{{L{\'e}vy processes and Fourier analysis on compact Lie
  groups}}, Ann. Probab. (2004), 1553--1573.

\bibitem{mccarthy2018diaconis}
J.P. McCarthy, \emph{{Diaconis-Shahshahani upper bound lemma for finite quantum
  groups}}, J. Fourier Anal. App. \textbf{25} (2019), 2463–--2491.

\bibitem{meliot2014cut}
P.-L. M{\'e}liot, \emph{{The cut-off phenomenon for Brownian motions on compact
  symmetric spaces}}, Potential Anal. \textbf{40} (2014), no.~4, 427--509.

\bibitem{neshveyev2014compact}
S.~Neshveyev and L.~Tuset, \emph{{Compact quantum groups and their
  representation categories}}, Cours Sp\'ecialis\'es, vol.~20, Société
  Mathématique de France, 2013.

\bibitem{NestoridiThomas}
E.~Nestoridi and S.~Thomas, \emph{Limit profiles for markov chains}, arXiv
  preprint arXiv:2005.13437 (2020).

\bibitem{nica2006lectures}
A.~Nica and R.~Speicher, \emph{{Lectures on the combinatorics of free
  probability}}, Lecture Note Series, vol. 335, London Mathematical Society,
  2006.

\bibitem{coursSalez}
J.~Salez, \emph{{Temps de Mélange des Chaînes de Markov}}, Online Lecture
  Notes available at \url{www.ceremade.dauphine.fr/~salez/mixing.pdf} (2018).

\bibitem{takesaki2002theory}
M.~Takesaki, \emph{{Theory of operator algebras I}}, {Encyclop\ae{}dia of
  Mathematical Sciences}, vol. 124, Springer, 2002.

\bibitem{takesaki2003theoryII}
\bysame, \emph{{Theory of operator algebras II}}, {Encyclop\ae{}dia of
  Mathematical Sciences}, vol. 125, Springer, 2003.

\bibitem{teyssier2019limit}
L.~Teyssier, \emph{{Limit profile for random transpositions}}, Ann. Probab.
  \textbf{48} (2019), no.~5, 2323--2343.

\bibitem{timmermann2008invitation}
T.~Timmermann, \emph{{An invitation to quantum groups and duality. From Hopf
  algebras to multiplicative unitaries and beyond}}, EMS Textbooks in
  Mathematics, European Mathematical Society, 2008.

\bibitem{Voit1996}
M.~Voit, \emph{{Asymptotic distributions for the Ehrenfest urn and related
  random walks}}, J. Appl. Probab. (1996), 340--356.

\bibitem{wang1995free}
Sh. Wang, \emph{Free products of compact quantum groups}, Comm. Math. Phys.
  \textbf{167} (1995), no.~3, 671--692.

\bibitem{wang1998quantum}
\bysame, \emph{Quantum symmetry groups of finite spaces}, Comm. Math. Phys.
  \textbf{195} (1998), no.~1, 195--211.

\bibitem{woronowicz1995compact}
S.L. Woronowicz, \emph{{Compact quantum groups}}, Sym{\'e}tries quantiques (Les
  Houches, 1995) (1998), 845--884.

\end{thebibliography}

\end{document}